\newtheorem{theorem}{Theorem}
\newtheorem{proposition}[theorem]{Proposition}
\newtheorem{lemma}[theorem]{Lemma}
\newtheorem{example}[theorem]{Example}
  \newenvironment{proof}{\small{\bf Proof.}}%
  {\hfill$\Box$\normalsize\bigskip}
\newcommand{\RR}{{\mathbb R}} 
\newcommand{\EE}{{\mathbb E\,}} 
\newcommand{\PP}{{\mathbb P}} 
\newcommand{\1}{{\mathbf 1}}
\newcommand{\NATURE}{K}
\newcommand{\nature}{k} 
\newcommand{\ACTIONS}{{A}}
\newcommand{\Action}{A} 
\newcommand{\action}{a} 
\newcommand{\SIGNED}{\Sigma} 
\newcommand{\signed}{s}
\newcommand{\BELIEFS}{\Delta}
\newcommand{\belief}{p}
\newcommand{\beliefbis}{q}
\newcommand{\priorbelief}{\bar{p}} 
\newcommand{\sphere}{S^{|\NATURE|-1}}
\newcommand{\seminorm}[1]{\Vert #1 \Vert} 
\newcommand{\FACE}{F}
\newcommand{\face}{f}
\newcommand{\NORMAL}{N}
\newcommand{\normal}{n}
\newcommand{\neighborhood}[1]{{\mathcal #1}}
\newcommand{\OptimalActions}{\ACTIONS^\star}
\newcommand{\RevealedBeliefs}{\BELIEFS_{\ACTIONS}^\star}
\newcommand{\ConfidenceSet}{\BELIEFS_{\ACTIONS}^{\textrm{c}}}
\newcommand{\IndifferenceKernel}{\SIGNED_{\ACTIONS}^{\textrm{i}}}
\newcommand{\ConstantSupspeciale}{C_{\ACTIONS}}
\newcommand{\ConstantSup}{C_{\priorbelief,\ACTIONS}}
\newcommand{\ConstantInf}{c_{\priorbelief,\ACTIONS}}
\newcommand{\ConstantInfEpsilon}{c_{\priorbelief,\ACTIONS,\varepsilon}}
\newcommand{\mtext}[1]{\,\mbox{#1}\,} 
\newcommand{\proscal}[2]{\left\langle#1\:,#2\right\rangle}  
\newcommand{\norm}[1]{\|#1\|}
\newcommand{\opt}{^{\sharp}}
\newcommand{\Value}{{v}}
\newcommand{\wealth}{w}
\newcommand{\riskaversion}{R}
\newcommand{\utility}{u}
\newcommand{\Utility}{U}
\newcommand{\defset}[2]{\left\{#1\:\left|\:#2\right.\right\}}
\newcommand{\np}[1]{(#1)}                                   
\newcommand{\bp}[1]{\big(#1\big)}                           
\newcommand{\Bp}[1]{\Big(#1\Big)}                           
\newcommand{\bc}[1]{\big[#1\big]}                           
\newcommand{\Bc}[1]{\Big[#1\Big]}                           
\newcommand{\Ba}[1]{\Big\{#1\Big\}}                         
\newcommand{\eqsepv}{\; , \enspace}       
\newcommand{\eqfinv}{\; ,}                
\newcommand{\eqfinp}{\; .}                
\newcommand{\va}[1]{\mathbf{#1}}
\newcommand\VoI{\mathbf{VoI}}
\newcommand{\Primal}{\RR^{\NATURE}}
\newcommand{\Dual}{\RR^{\NATURE}} 
\newcommand{\primal}{x}
\newcommand{\dual}{y}
\newcommand{\SUBSET}{S}
\newcommand{\Convex}{C}
\newcommand{\aff}{\mathop{\mathrm{aff}}}
\newcommand{\ri}{\mathop{\mathrm{ri}}}
\title{Payoffs-Beliefs Duality and the Value of Information}
\author{Michel \textsc{De Lara}\thanks{%
CERMICS, \'Ecole des Ponts, UPE, Champs-sur-Marne, France.
E-mail: \texttt{michel.delara@enpc.fr}}
\and Olivier \textsc{Gossner}\thanks{%
CREST, CNRS, \'Ecole Polytechnique,
and London School of Economics. Email: \texttt{olivier.gossner@cnrs.fr}}}
\begin{document}

\maketitle

    \begin{abstract}%
    In decision problems under incomplete information, 
actions (identified to payoff vectors indexed by states of nature) 
and beliefs are naturally paired by bilinear duality.
    We exploit this duality to analyze the value of information,
    using concepts and tools from convex analysis.
    We define the value function as the support function of the set of available actions:
    the subdifferential at a belief is the set of optimal actions at this belief;
    the set of beliefs at which an action is optimal is the normal cone 
of the set of available actions at this point.
    Our main results are 1) a necessary and sufficient condition for positive value of information
    2) global estimates of the value of information of any information
    structure from local properties of the value function 
    and of the set of optimal actions taken at the prior belief only.
    We apply our results to the marginal value of information at the null,
    that is, when the agent is close to receiving no information at all, 
and we provide conditions under which the marginal value of information is
infinite, null, or positive and finite. 
    \end{abstract}%

\textbf{Keywords:} value of information, convex analysis, payoffs-beliefs
  duality.

\textbf{AMS classification:} 46N10, 91B06.


\section{Introduction}

 The value of a piece of information to an economic agent depends on the information at hand, on the agent's prior on the state of nature, and on the decision problem faced.   
These elements are intrinsically tied, and separating the influence of one of
them from that of the others is not straightforward.

Most information rankings are either uniform among agents or restricted to
certain classes of agents. Blackwell's comparison of experiments
\cite{BlackwellEquivalentComparisonOfExperiments53}, for instance, is uniform;
it states that an information structure is more informative than another if all
agents, no matter  their available choices and preferences, weakly prefer the
former to the latter. Papers \cite{Lehmann, PersicoAuctions2000,
  CabralesGossnerSerranoEntropyAER2013} are examples
that build information rankings based on restricted sets of decision problems. 
The flip side of this approach is that information rankings are silent as to the
dependency of the value of a fixed piece information on the agent's preferences
and available choices. They do not tell us what makes information more or less
valuable to an arbitrary agent, and neither can they identify the agents who
value a given piece of information more than others. If we want to answer this
type of questions, we need to examine carefully how information, priors,
decisions and preferences come into play.  

The effect of priors and evidence on beliefs is well understood. Given a prior
belief, and after receiving some information, an agent forms a posterior
belief. Posterior beliefs average out to the prior belief, and information
acquisition can usefully be represented by the distribution of these posterior
beliefs (see, e.g.\,\cite{BohnenblustShapleySherman49,AumMas67inproc}).

In any decision problem, to each decision and state of nature corresponds a
payoff. The decision problem can thus be represented as a set of available
vector payoffs, where each payoff is indexed by a state of nature
\cite{BlackwellComparisonOfExperiments51}. Given a posterior belief, the agent
makes a decision that maximizes her expected utility so that, to each 
(posterior) belief of the agent corresponds an expected utility at this
belief. The corresponding map from beliefs to expected payoffs is called the
\emph{value function}.
The value of a piece of information, defined as the difference in expected utilities from
having or not having the information at hand, is thus the difference between the
expectation of the value function at the posterior and at the prior, and is
nonnegative. 
Thus, the value function fully captures the agent's preferences for information. 

In this paper, we make use of \emph{convex analysis} \cite{Rockafellar:1970}
to exploit a bilinear duality structure between payoffs and beliefs,
that gives expected payoff \cite{Dentcheva-Ruszczynski:2013}. 
Primal variables are payoffs vectors, dual variables are beliefs (or, more
generally, signed measures) and the value function appears as the 
(restriction to beliefs of the) support function of the set of available vector
payoffs. This provides a correspondence between convex analysis concepts and
tools, on the one hand, and economic objects, on the other hand. 
The set of beliefs compatible with an optimal action
is related to the \emph{normal cone} of the set of available vector
payoffs at this optimal action.
The \emph{subdifferential} of the value function at any belief 
can be represented
as the set of optimal choice of vector payoffs at this belief. 

We express the value of information according to the influence it has on
decisions. 
We provide three upper and lower bounds on the value of information. 

In the first upper and lower bounds, we characterize information with a positive
value. We show that information has a positive value if and only if at least one
of the optimal actions at the prior becomes suboptimal for some of the
posteriors. We thus define the confidence set at a prior belief~$\priorbelief$
as the set of posterior beliefs for which all optimal actions at~$\priorbelief$
remain optimal. Our result says that information has positive value if and only
if posterior beliefs fall outside of the confidence set with positive
probability. This result generalizes insights from \cite{Hir71} and
\cite{mirman1993monopoly}, who had already noticed that information can only be
useful insofar as it influences choices. We provide corresponding lower and
upper bounds to the value of information.  
 
In the second bounds, we express the fact that the value of information is
maximal when it influences actions the most, which happens when information
breaks indifferences between several choices. We show that, when this is the
case, the value of information can be suitably measured by an expected distance
between the prior and the posterior. There are several optimal actions at the
prior, and information that allows to break indifferences has highest value.  

Finally, our third bounds apply to cases in which the agent's optimal choice is
a smooth function of her belief around the prior. We show that, in this
situation, the value function is also smooth around the prior, and the value of
information is essentially a quadratic function of the expected distance between
the prior and the posterior. In this intermediate case, information impacts
actions in a continuous way. The optimal actions at the prior belief and at a
posterior close to it are themselves close; so choosing one instead of the other
has a mild, albeit positive, impact on the expected payoff. 

In a finite decision problem --- 
such as shopping behavior \cite{mcfadden1973conditional} 
or residential location \cite{mcfadden1978modeling} --- 
at any given prior the agent either has an optimal action that is locally
constant, or is indifferent between several optimal choices. The first and
second upper and lower bounds are particularly useful in finite choice
problems. The third bounds are most useful in decision problems with a continuum
of choices, such as scoring rules \cite{BrierScoringRules1950} or investment
decisions \cite{Arrow71}.  
\medskip

The paper is organized as follows. Sect.~\ref{sec:model} presents the model
and introduces the duality between actions/payoffs and beliefs. The
main results are presented in Sect.~\ref{sec:value_info}. 
Sect.~\ref{sec:insure} is devoted to an illustration of our results in an insurance example and
Sect.~\ref{sec:marginal} to 
applications to the question of marginal value of information.
Sect.~\ref{sec:related_literature}
concludes by discussing related literature.
The Appendix contains background on convex analysis and the proofs.

\section{Model, payoffs-beliefs duality and information}
\label{sec:model}

We consider the classical question of an agent who faces a decision problem 
under imperfect information on a state of nature. 
The  set of states of nature is a finite set~$\NATURE$. 
We identify the set~$\SIGNED$ of signed measures on~$\NATURE$ 
with~$\RR^{\NATURE}$.
The agent holds a prior belief~$\priorbelief$ with full support in the set 
\(   \BELIEFS=\Delta(\NATURE) \subset \SIGNED=\RR^{\NATURE} \) 
of probability distributions over~$\NATURE$. 
We identify~$\BELIEFS$ with the simplex of~$\RR^{\NATURE}$.

A \emph{decision problem} is given by an arbitrary compact choice set~$D$ and by
a continuous payoff function~$g\colon D\times \NATURE\to \RR$. Consistent with
the framework of \cite{BlackwellEquivalentComparisonOfExperiments53}, we define
the set of \emph{actions} as the compact convex subspace of~$\RR^\NATURE$ given
by the \emph{closed convex hull}:
\begin{equation}
\ACTIONS = \overline{\textrm{co}} 
\{\bp{g(d,\nature)}_{\nature \in \NATURE}, d\in D\} \subset \RR^{\NATURE}
\eqfinp 
\label{eq:ACTIONS}
\end{equation}
The convexity of~$\ACTIONS$ is justified by allowing the agent to randomize over
actions. 

\subsection*{Duality between actions/payoffs and beliefs}

The scalar product between a vector~$v \in \RR^{\NATURE}$ and a 
signed measure~$\signed \in \RR^{\NATURE}$ is 
\( 
 \proscal{\signed}{v} = 
\sum_{\nature \in \NATURE} \signed_{\nature}v_{\nature} 
\). 
This scalar product induces a duality between payoffs/actions and beliefs.
Such a duality is at the core of a series of works in nonexpected utility theory, such as
\cite{Gilboa-Schmeidler:1989,MacMarRusAmbiguity2006,%
cerreia2011complete}.

Under belief~$\belief \in \BELIEFS$, the decision maker chooses 
a decision~$d\in D$ that maximizes~$\sum_k p_k g(d,k)$, or, equivalently, 
an action~$\action \in \ACTIONS$ that maximizes~$\proscal{p}{a}$, and the corresponding 
\emph{expected payoff} is
\( \max_{\action\in \ACTIONS} \proscal{\belief}{\action} \in \RR \).
We define the \emph{value function}~$\Value_{\ACTIONS}: \BELIEFS \to \RR$ by: 
\begin{equation}
  \Value_{\ACTIONS}(\belief) = 
\max_{\action\in \ACTIONS} \proscal{\belief}{\action} 
\eqsepv \forall \belief \in \BELIEFS \eqfinp 
\label{eq:value_function}
\end{equation}
The value function~$\Value_{\ACTIONS}\colon \BELIEFS \to \RR$ is
convex --- as the supremum of the family of 
affine functions~$\proscal{\cdot}{\action}$ for~$\action\in \ACTIONS$ ---
and continuous --- as its effective domain is the whole convex set~$\BELIEFS$
\cite[p.~175]{Hiriart-Ururty-Lemarechal-I:1993}.

Given a belief~$\belief \in \BELIEFS$, 
we let~$\OptimalActions(\belief) \subset \ACTIONS$ be the 
\emph{set of optimal actions at belief~$\belief$}, given by 
\begin{equation}
\OptimalActions(\belief) = 
\arg\max_{\action' \in \ACTIONS} \proscal{\belief}{\action'}
= \{ \action \in \ACTIONS \mid
\forall \action'\in \ACTIONS \eqsepv 
\proscal{\belief}{\action'} \leq \proscal{\belief}{\action} \} \eqfinp
\label{eq:face_beliefs}
\end{equation}
Geometrically, the set~$\OptimalActions(\belief)$ is the \emph{(exposed) face of~$\ACTIONS$ 
in the direction~$\belief \in \BELIEFS$} (see \eqref{eq:face_signed}
in Appendix for a proper definition).
The set~$\OptimalActions(\belief)$ is nonempty, closed and
convex (as~$\ACTIONS$ is convex and compact).

Conversely, an outside observer can make inferences 
on the agent's beliefs from observed actions.
For an action~$\action\in\ACTIONS$,  
the set~$\RevealedBeliefs(\action)$  of 
\emph{beliefs revealed by action~$\action$} 
is the set of all beliefs for which~$\action$ is an optimal action,
given by:
\begin{equation}
\RevealedBeliefs(\action) = \{ \belief \in \BELIEFS \mid
\forall \action'\in \ACTIONS \eqsepv 
\proscal{\belief}{\action'} \leq \proscal{\belief}{\action} \} \eqfinp
\label{eq:normal_cone_beliefs} 
\end{equation}
Geometrically, the set~$\RevealedBeliefs(\action)$ is the intersection with~$\BELIEFS$ 
of the \emph{normal cone}~$\NORMAL_{\ACTIONS}(\action)$ (see \eqref{eq:normal_cone_signed} for a proper definition).

Obviously, given~$\action\in \ACTIONS$ and $\belief\in \BELIEFS$, 
$\action \in \OptimalActions(\belief)$ iff
$\belief \in \RevealedBeliefs(\action)$, 
as both express that action~$\action$ is optimal under belief~$\belief$.

\subsection*{Information structure}

We follow
\cite{BohnenblustShapleySherman49,BlackwellEquivalentComparisonOfExperiments53}, 
and we describe information through a distribution of posterior beliefs 
that average to the prior belief. 
Hence, given the prior belief~$\priorbelief$, we define an \emph{information structure}
as a random variable~$\va{\beliefbis}$, defined over a probability space
$(\Omega,{\cal F},\PP)$ and with values in~$\BELIEFS$, describing the agent's
posterior beliefs, and such that 
(where~$\EE$ denotes the expectation operator with respect to~$\PP$) 
\begin{equation}
\va{\beliefbis} : (\Omega,{\cal F},\PP) \to \BELIEFS 
\eqsepv  \EE\bc{\va{\beliefbis}} = \priorbelief 
\eqfinp
\label{eq:information_structure}
\end{equation} 

Given the {action set}~$\ACTIONS$ in~\eqref{eq:ACTIONS} and the
information structure~$\va{\beliefbis}$ in~\eqref{eq:information_structure}, 
the \emph{value of information}~$\VoI_{\ACTIONS}\np{\va{\beliefbis}}$ is the
difference between the expected payoff for an agent who receives information
according to~$\va{\beliefbis}$ and one whose prior belief is~$\priorbelief$. It is given by: 
\begin{equation}
  \VoI_{\ACTIONS}\np{\va{\beliefbis}} 
=  \EE \bc{\Value_{\ACTIONS}\np{\va{\beliefbis}}}-\Value_{\ACTIONS}(\priorbelief) 
 \eqfinp
\label{eq:VoI}
\end{equation}

The following example illustrates relations between 
the set~$\ACTIONS$ of actions and the value function~$\Value_\ACTIONS$. 
\begin{example}\label{ex:example1}
Consider two states of nature, $\NATURE=\{1, 2\}$, 
decisions~$D=\{d_1, d_2, d_3, d_4\}$, 
and payoffs given by Table~\ref{Table_of_payoffs}.
\begin{table}[htp]
\begin{center}
\begin{tabular}{|c|c|c|}
\cline{1-3}
	&$k=1$&$k=2$\\
\cline{1-3}
$d_1$&$3$&$0$\\
$d_2$&$2$&$2$\\
$d_3$&$0$&$5/2$\\
$d_4$&$0$&$0$\\
\hline
\end{tabular}
\end{center}
\caption{Table of payoffs}\label{Table_of_payoffs}
\end{table}%
In this case, $\ACTIONS$ is the convex hull of the four 
points~$(3,0)$, $(2,2)$, $(0, 5/2)$ and $(0,0)$. 
The value function~$\Value_\ACTIONS$, expressed as a function of the 
probability~$\belief$ of state~2, is the maximum of the 
following three affine functions: $3(1-\belief)$, 2, and $5\belief/2$. 
Action~$(3,0)$ is optimal for~$\belief\leq1/3$, 
$(2,2)$ is optimal for~$\belief\in[1/3, 4/5]$, and 
$(0,5/2)$ is optimal for~$\belief\geq 4/5$. 
Both the set~$\ACTIONS$ and the function~$\Value_\ACTIONS$ 
are represented in Figure~\ref{figure:example_discrete}.

\begin{figure}[h]
\includegraphics[width=\textwidth]{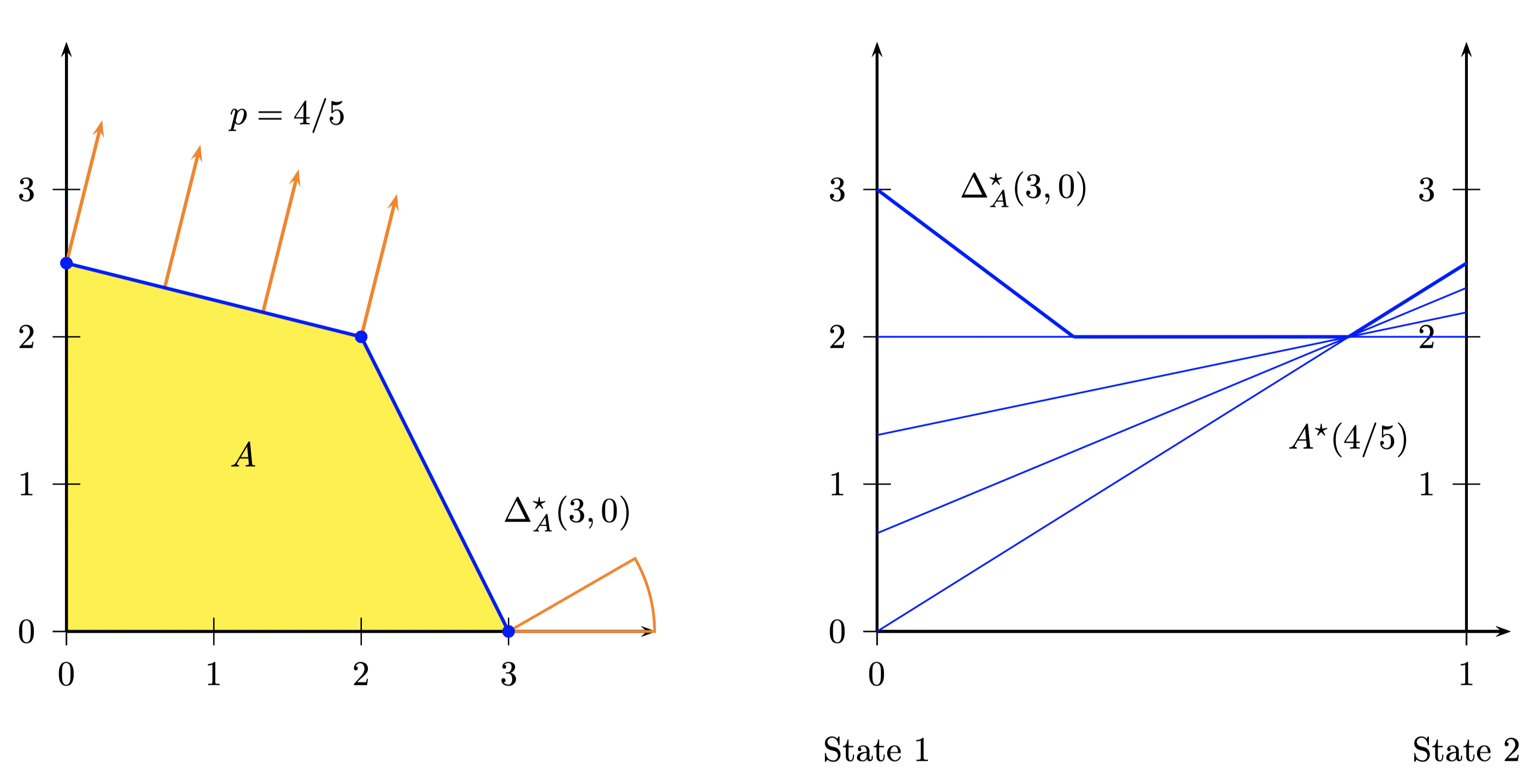}
\caption{The set~$\ACTIONS$ of actions on the left, and 
the value function~$\Value_\ACTIONS$ on the right. 
Each of the four arrows on the left represents an action~$\action$ 
such that~$\belief=4/5$ belongs to 
the set~$\RevealedBeliefs(\action)$ of beliefs revealed by action~$\action$.
On the right side, these four actions (each attached to an arrow)
can be seen as four elements of the subdifferential 
of the value function~$\Value_\ACTIONS$ at~$\belief=4/5$. 
The set~$\RevealedBeliefs(3,0)=[0,1/3]$ can be visualized 
both as the normal cone at~$(3,0)$ on the left side, 
and as the range of values of probabilities~$\belief$ 
for which~$(3,0)$ is optimal on the right.
\label{figure:example_discrete}}
\end{figure}

At~$\belief = 4/5$, the optimal actions are~$(2,2)$, $(0,5/2)$, 
and their convex combinations. 
At this point, the mapping~$\Value_\ACTIONS$ is not differentiable. 
However, its subdifferential --- which can be visualized as the set of 
straight lines that are below~$\Value_\ACTIONS$ and 
tangent to it at~$\belief =4/5$ --- 
is still well defined and corresponds precisely 
to the optimal actions~$\OptimalActions(4/5)$, 
i.e.\,the convex hull of~$\{(2,2),(0,5/2)\}$. 
 
The set~$\RevealedBeliefs(3,0)$ of 
beliefs revealed by action~$(3,0)$ consists of the 
range~$\belief\in[0,1/3]$, and it can be seen on the right side
of Figure~\ref{figure:example_discrete} that, 
for this range of probabilities, the action~$(3,0)$ is optimal 
and that $\Value_\ACTIONS$ is linear and equal to~$3(1-\belief)$. 
\end{example}

\section{On the value of information}
\label{sec:value_info}

In this section, we relate the geometry of the set~$\ACTIONS$ of actions in~\eqref{eq:ACTIONS}
with the behavior of the agent around the prior belief~$\priorbelief$, 
with differentiability properties of the value function~$\Value_\ACTIONS$ in~\eqref{eq:value_function}
at the prior belief~$\priorbelief$, 
and with the value of information~$\VoI_{\ACTIONS}$ in~\eqref{eq:VoI}. 
This approach allows us to derive bounds on the value of information 
that depend on how information influences actions. 

First, in Subsect.~\ref{Valuable_information},
we consider information that does not allow us to eliminate optimal actions. We
introduce the \emph{confidence set} as the set of posterior beliefs at which all
optimal actions at the prior remain optimal. We show that information is
valuable if and only if, with positive probability, it can lead to a posterior
outside this set. Therefore, information is valuable whenever it allows to
eliminate some actions from the set of optimal ones. 

Second, in Subsect.~\ref{Indifferences},
we consider the somewhat opposite case of tie-breaking information. This corresponds to situations in which the agent is indifferent between several actions, and the information allows her to select among them. We show that the value of information can be related to an expected distance between the prior and the posterior, provided that posterior beliefs move in these tie-breaking directions. 

These two first approaches are suitable in finite decision problems where the
value function is piecewise linear. 
In the third approach, in Subsect.~\ref{Flexible_decisions},
we look at situations in which the optimal action is locally unique around the
prior and depends on information  
in a continuous and smooth way. There, we show that the value of information can
essentially be measured as an expected square distance from the prior to the
posterior. This approach is particularly adapted to cases in which the space of
actions is sufficiently rich, and where small changes of beliefs lead to
corresponding small changes of actions.

\subsection{Valuable information}
\label{Valuable_information}

Our first task is to formalize the idea that useful information 
is information that affects optimal choices (quoting \cite{Hir71},
``Information is of value only if it can affect action'').
Since there are potentially several optimal actions at a prior belief~$\priorbelief$ 
and at a posterior~$\belief$, there are in principle many ways 
to formalize this idea. 

We say that a belief~$\belief$ is in the 
\emph{confidence set}~$\ConfidenceSet(\priorbelief)$ 
of prior belief~$\priorbelief$ iff all optimal actions at~$\priorbelief$ 
(those in \( \OptimalActions(\priorbelief) \))
are also optimal at~$\belief$. In other words, we define the 
\emph{confidence set of prior belief~$\priorbelief$} by:
\begin{equation}
\ConfidenceSet(\priorbelief) = 
\bigcap_{\action \in \OptimalActions(\priorbelief)} \RevealedBeliefs(\action) \eqfinp
\label{eq:ConfidenceSet}
\end{equation}
Another way to look at this notion is to consider an 
observer who sees choices by the decision maker: 
$\belief \in \ConfidenceSet(\priorbelief)$ when none of the actions 
chosen by the agent at prior belief~$\priorbelief$ would lead the observer to 
refute the possibility that the agent has belief~$\belief$. 

The notion of a confidence set allows for the characterization 
of valuable information as follows.

\begin{proposition}[Valuable information]
For every information structure~$\va{\beliefbis}$ as
in~\eqref{eq:information_structure},
we have:
\begin{subequations}
\begin{align}
\VoI_{\ACTIONS}\np{\va{\beliefbis}} = 0 
& \iff \exists \action^\star \in \OptimalActions(\priorbelief) \eqsepv 
\action^\star \in \OptimalActions\np{\va{\beliefbis}} \eqsepv \PP-\mbox{a.s.}\\ 
&  \iff \va{\beliefbis} \in \ConfidenceSet(\priorbelief) \eqsepv \PP-\mbox{a.s.}
\end{align}
\end{subequations}
\label{pr:Valuable_information}
\end{proposition}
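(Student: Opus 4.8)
The plan is to prove the two equivalences by chaining together three statements: (a) $\VoI_{\ACTIONS}(\va{\beliefbis}) = 0$; (b) there exists $\action^\star \in \OptimalActions(\priorbelief)$ that is $\PP$-a.s. optimal at $\va{\beliefbis}$; (c) $\va{\beliefbis} \in \ConfidenceSet(\priorbelief)$ $\PP$-a.s. The direction $(b) \Rightarrow (a)$ is the most transparent and I would do it first: if $\action^\star$ is optimal at $\priorbelief$ and a.s.\ optimal at $\va{\beliefbis}$, then $\Value_\ACTIONS(\va{\beliefbis}) = \proscal{\va{\beliefbis}}{\action^\star}$ a.s., so taking expectations and using $\EE[\va{\beliefbis}] = \priorbelief$ from~\eqref{eq:information_structure} gives $\EE[\Value_\ACTIONS(\va{\beliefbis})] = \proscal{\priorbelief}{\action^\star} = \Value_\ACTIONS(\priorbelief)$, hence $\VoI = 0$.

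For $(c) \Rightarrow (b)$, I would argue that the value of $\ConfidenceSet(\priorbelief)$ is precisely to make this implication almost tautological: by definition~\eqref{eq:ConfidenceSet}, if $\va{\beliefbis}(\omega) \in \ConfidenceSet(\priorbelief)$ then \emph{every} $\action \in \OptimalActions(\priorbelief)$ lies in $\OptimalActions(\va{\beliefbis}(\omega))$. So pick any fixed $\action^\star \in \OptimalActions(\priorbelief)$ (nonempty since $\ACTIONS$ is compact), and it works on the full-probability event $\{\va{\beliefbis} \in \ConfidenceSet(\priorbelief)\}$. The implication $(b) \Rightarrow (c)$ does \emph{not} hold pointwise in general — a single $\action^\star$ being optimal does not force \emph{all} of $\OptimalActions(\priorbelief)$ to be optimal — so I would not attempt it; instead I close the loop with $(a) \Rightarrow (c)$.

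The main work, and the expected obstacle, is $(a) \Rightarrow (c)$. The idea is a standard convexity/strict-inequality argument: $\Value_\ACTIONS$ is convex and, for each realized belief $\beliefbis$, one has the pointwise inequality $\Value_\ACTIONS(\beliefbis) \ge \proscal{\beliefbis}{\action}$ for every $\action \in \OptimalActions(\priorbelief)$, with equality exactly when $\action \in \OptimalActions(\beliefbis)$, i.e.\ when $\beliefbis \in \RevealedBeliefs(\action)$. Fix $\action \in \OptimalActions(\priorbelief)$; then $h_\action(\beliefbis) := \Value_\ACTIONS(\beliefbis) - \proscal{\beliefbis}{\action} \ge 0$ everywhere, and $\EE[h_\action(\va{\beliefbis})] = \EE[\Value_\ACTIONS(\va{\beliefbis})] - \proscal{\EE[\va{\beliefbis}]}{\action} = \EE[\Value_\ACTIONS(\va{\beliefbis})] - \proscal{\priorbelief}{\action} = \EE[\Value_\ACTIONS(\va{\beliefbis})] - \Value_\ACTIONS(\priorbelief) = \VoI_\ACTIONS(\va{\beliefbis})$, using $\action \in \OptimalActions(\priorbelief)$ for the third equality. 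So $\VoI_\ACTIONS(\va{\beliefbis}) = 0$ forces $h_\action(\va{\beliefbis}) = 0$ $\PP$-a.s., i.e.\ $\va{\beliefbis} \in \RevealedBeliefs(\action)$ $\PP$-a.s. The only subtlety is the quantifier order: this gives a $\PP$-null exceptional set $N_\action$ \emph{depending on} $\action$, whereas $(c)$ requires $\va{\beliefbis} \in \bigcap_{\action \in \OptimalActions(\priorbelief)} \RevealedBeliefs(\action)$ off a single null set. I would handle this by reducing to the extreme points of the polytope (or, in the general compact-convex case, by a separability argument): $\OptimalActions(\priorbelief)$ is a compact convex set, $\RevealedBeliefs(\action)$ is convex in $\action$ in the sense that $\bigcap_{\action \in \OptimalActions(\priorbelief)}\RevealedBeliefs(\action) = \bigcap_{\action \in \mathrm{ext}\,\OptimalActions(\priorbelief)}\RevealedBeliefs(\action)$, and it suffices to intersect countably many null sets over a countable dense subset of the (finitely many, in the polytope case) extreme points, using continuity of $\beliefbis \mapsto h_\action(\beliefbis)$ jointly in $(\beliefbis,\action)$ to pass to the closure. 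This yields a single $\PP$-null set off which $\va{\beliefbis} \in \ConfidenceSet(\priorbelief)$, completing $(a) \Rightarrow (c)$ and hence the cycle $(a)\Rightarrow(c)\Rightarrow(b)\Rightarrow(a)$.
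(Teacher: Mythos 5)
Your proposal is correct and follows essentially the same route as the paper: fix $\action\in\OptimalActions(\priorbelief)$, observe that $\Value_\ACTIONS(\va{\beliefbis})-\proscal{\va{\beliefbis}}{\action}$ is a nonnegative random variable whose expectation equals $\VoI_\ACTIONS(\va{\beliefbis})$ (using $\EE[\va{\beliefbis}]=\priorbelief$), deduce almost-sure vanishing, and then handle the uncountable intersection of $\action$-dependent null sets via a countable dense subset of the compact set $\OptimalActions(\priorbelief)$ together with closedness/continuity — which is exactly the paper's argument. The only difference is cosmetic (your cycle $(a)\Rightarrow(c)\Rightarrow(b)\Rightarrow(a)$ versus the paper's chain of equivalences), so there is nothing further to flag.
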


In Example~\ref{ex:example1}, the confidence set at $\priorbelief = 1/2$ is the
closed interval $[1/3, 4/5]$ (the flat portion of the function to 
the right of Figure~\ref{figure:example_discrete}).
Information is valuable whenever, with some
positive probability, the posterior does not belong to this set. When the
posterior falls in this set with probability one, the
value function averaged at the prior precisely equals the value at prior
belief~$\priorbelief$, hence information has no value.  

It is relatively straightforward to see that if all posteriors remain 
in the confidence set, information is valueless. In fact, 
when this is the case, the same action is optimal for all of the posteriors, 
which means that the agent can play this action,
while ignoring the new information, and obtain the same value. 
The proposition shows that the  converse result also holds: 
the value of information is positive whenever posteriors fall outside 
of the confidence set with some positive probability. 

More can be said about estimates on the value of information. 
To do so, we introduce an $\varepsilon$-neighborhood 
of the confidence set~$\ConfidenceSet(\priorbelief)$. 
For $\varepsilon>0$, let 
\begin{equation}
  {\BELIEFS_{\ACTIONS,\varepsilon}^{\textrm{c}}}(\priorbelief)=
\{\beliefbis \in \BELIEFS \mid 
d\bp{\beliefbis,\ConfidenceSet(\priorbelief)}< \varepsilon\} 
\mtext{ where }
  d\bp{\beliefbis,\ConfidenceSet(\priorbelief)}= 
\inf_{\belief \in \ConfidenceSet(\priorbelief)} \Vert \belief-\beliefbis\Vert .
\label{eq:distance}
\end{equation}
This leads us to a first estimate of the value of information.
\begin{theorem}[Bound on the value of information based on confidence sets]
For every~$\varepsilon>0$, there exist positive 
constants~$\ConstantSupspeciale$  and $\ConstantInfEpsilon$ 
such that, for every information structure~$\va{\beliefbis}$ as in~\eqref{eq:information_structure}:
\begin{equation}
\ConstantSupspeciale 
\EE \bc{ d\bp{\va{\beliefbis},\ConfidenceSet(\priorbelief)} } 
\geq \VoI_{\ACTIONS}\np{\va{\beliefbis} } 
\geq \ConstantInfEpsilon \PP\{ \va{\beliefbis} \not\in 
{\BELIEFS_{\ACTIONS,\varepsilon}^{\textrm{c}}}(\priorbelief) \} 
\eqfinp
\label{eq:Valuable_information}
\end{equation}
\label{th:Valuable_information}
\end{theorem}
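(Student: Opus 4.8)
The proof has two independent halves: the upper bound (Lipschitz-type estimate) and the lower bound (positive on the complement of the $\varepsilon$-neighborhood). Both rely on the fact that $\Value_\ACTIONS$ is convex and finite on the compact set $\BELIEFS$, hence Lipschitz continuous on $\BELIEFS$ with some constant $L_\ACTIONS$. For the upper bound, I would start from the identity $\VoI_\ACTIONS(\va\beliefbis)=\EE[\Value_\ACTIONS(\va\beliefbis)-\Value_\ACTIONS(\priorbelief)]$ and decompose, for each realization $\beliefbis$, the increment $\Value_\ACTIONS(\beliefbis)-\Value_\ACTIONS(\priorbelief)$ via an intermediate point: pick $\beliefbis'\in\ConfidenceSet(\priorbelief)$ achieving (or nearly achieving) $d(\beliefbis,\ConfidenceSet(\priorbelief))$. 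On $\ConfidenceSet(\priorbelief)$ the value function is affine — indeed equal to $\proscal{\cdot}{\action^\star}$ for any fixed $\action^\star\in\OptimalActions(\priorbelief)$ — so $\Value_\ACTIONS(\beliefbis')-\Value_\ACTIONS(\priorbelief)=\proscal{\beliefbis'-\priorbelief}{\action^\star}$, whose expectation vanishes since $\EE[\va\beliefbis]=\priorbelief$ and $\ConfidenceSet$ is convex so that $\EE[\va\beliefbis']$ stays controlled. The remaining piece $\Value_\ACTIONS(\beliefbis)-\Value_\ACTIONS(\beliefbis')$ is bounded by $L_\ACTIONS\|\beliefbis-\beliefbis'\|=L_\ACTIONS\, d(\beliefbis,\ConfidenceSet(\priorbelief))$; taking expectations and setting $\ConstantSupspeciale=L_\ACTIONS$ gives the left inequality. (One must be slightly careful that $\va\beliefbis'$ can be chosen measurably — a measurable selection of the metric projection onto the closed convex set $\ConfidenceSet(\priorbelief)$ works, and here the projection is single-valued.)

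**The lower bound.** This is where the real work is, and I expect it to be the main obstacle. The claim is that there is $\ConstantInfEpsilon>0$ with $\Value_\ACTIONS(\beliefbis)-\Value_\ACTIONS(\priorbelief)\ge \ConstantInfEpsilon$ whenever $d(\beliefbis,\ConfidenceSet(\priorbelief))\ge\varepsilon$, from which the bound follows by $\VoI_\ACTIONS(\va\beliefbis)=\EE[\Value_\ACTIONS(\va\beliefbis)-\Value_\ACTIONS(\priorbelief)]\ge \ConstantInfEpsilon\,\PP\{\va\beliefbis\notin\BELIEFS^{\mathrm c}_{\ACTIONS,\varepsilon}(\priorbelief)\}$, using that the integrand is always $\ge 0$ (Proposition~\ref{pr:Valuable_information} and convexity). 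So it suffices to prove: the function $\beliefbis\mapsto\Value_\ACTIONS(\beliefbis)-\Value_\ACTIONS(\priorbelief)$ is strictly positive on the compact set $\{\beliefbis\in\BELIEFS : d(\beliefbis,\ConfidenceSet(\priorbelief))\ge\varepsilon\}$, and then take $\ConstantInfEpsilon$ to be its minimum, which is attained and positive by continuity and compactness — provided that set is nonempty (if it is empty the inequality is vacuous). Positivity at a point $\beliefbis$ in that set: by definition $\beliefbis\notin\ConfidenceSet(\priorbelief)$, so some $\action^\star\in\OptimalActions(\priorbelief)$ is not optimal at $\beliefbis$, i.e. $\proscal{\beliefbis}{\action^\star}<\Value_\ACTIONS(\beliefbis)$; but $\proscal{\beliefbis}{\action^\star}$... — here one needs that $\Value_\ACTIONS(\priorbelief)=\proscal{\priorbelief}{\action^\star}\le\proscal{\beliefbis}{\action^\star}$ is \emph{not} automatic, so the cleanest route is: $\Value_\ACTIONS(\beliefbis)\ge\proscal{\beliefbis}{\action^\star}$ always, and $\Value_\ACTIONS(\beliefbis)-\Value_\ACTIONS(\priorbelief)\ge 0$ always; combine with the fact that if $\Value_\ACTIONS(\beliefbis)=\Value_\ACTIONS(\priorbelief)$ then by Proposition~\ref{pr:Valuable_information} applied to the two-point information structure $\{\priorbelief,\beliefbis\}$ — no, that averages wrong. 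Instead: directly, $\beliefbis\notin\ConfidenceSet(\priorbelief)$ combined with convexity of $\Value_\ACTIONS$ and $\priorbelief$ having full support forces $\Value_\ACTIONS(\beliefbis)>\Value_\ACTIONS(\priorbelief)$, because $\priorbelief\in\ri\BELIEFS$ lies in the relative interior of a segment through $\beliefbis$, and strict convexity along that direction would fail only if $\action^\star$ stays optimal past $\priorbelief$ toward $\beliefbis$ — which is exactly $\beliefbis\in\ConfidenceSet(\priorbelief)$. Making this last step rigorous (full support $\Rightarrow$ strict increase) is the crux.

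**Assembling.** Once the pointwise lower bound $\Value_\ACTIONS(\beliefbis)-\Value_\ACTIONS(\priorbelief)\ge\ConstantInfEpsilon$ on $\BELIEFS\setminus\BELIEFS^{\mathrm c}_{\ACTIONS,\varepsilon}(\priorbelief)$ is in hand, both inequalities of~\eqref{eq:Valuable_information} follow by integrating against $\PP$: for the upper bound, $\VoI_\ACTIONS(\va\beliefbis)\le\EE[L_\ACTIONS\,d(\va\beliefbis,\ConfidenceSet(\priorbelief))]=\ConstantSupspeciale\,\EE[d(\va\beliefbis,\ConfidenceSet(\priorbelief))]$; for the lower bound, split the expectation over $\{\va\beliefbis\in\BELIEFS^{\mathrm c}_{\ACTIONS,\varepsilon}\}$, where the integrand is $\ge 0$, and its complement, where it is $\ge\ConstantInfEpsilon$. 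Note $\ConstantSupspeciale$ depends only on $\ACTIONS$ (it is a Lipschitz constant of $\Value_\ACTIONS$), consistent with the notation in the statement, while $\ConstantInfEpsilon$ genuinely depends on $\priorbelief$, $\ACTIONS$ and $\varepsilon$ since it is the minimum of an excess-value function over an $\varepsilon$-shrunk set. The one technical point worth spelling out in the written proof is measurability of the projection $\va\beliefbis\mapsto\va\beliefbis'$ and the vanishing of $\EE[\proscal{\va\beliefbis'-\priorbelief}{\action^\star}]$; I would handle the latter by observing $\EE[\va\beliefbis']$ need not equal $\priorbelief$, so instead bound $\Value_\ACTIONS(\beliefbis')-\Value_\ACTIONS(\priorbelief)=\proscal{\beliefbis'-\priorbelief}{\action^\star}\le L_\ACTIONS\|\beliefbis'-\priorbelief\|\le L_\ACTIONS(\|\beliefbis'-\beliefbis\|+\|\beliefbis-\priorbelief\|)$ — no: better, keep $\beliefbis'$ only to witness affineness is not even needed; simply use $\Value_\ACTIONS(\beliefbis)-\Value_\ACTIONS(\priorbelief)\le L_\ACTIONS\,d(\beliefbis,\ConfidenceSet(\priorbelief))$ directly by noting that if $\beliefbis^{0}$ is the projection of $\priorbelief$... — in fact the slick version is: for $\action^\star\in\OptimalActions(\priorbelief)$, $\Value_\ACTIONS(\beliefbis)-\Value_\ACTIONS(\priorbelief)=\Value_\ACTIONS(\beliefbis)-\proscal{\priorbelief}{\action^\star}$, and since $\proscal{\cdot}{\action^\star}$ agrees with $\Value_\ACTIONS$ on $\ConfidenceSet(\priorbelief)$ and both are $L_\ACTIONS$-Lipschitz, $\Value_\ACTIONS(\beliefbis)-\proscal{\beliefbis}{\action^\star}\le 2L_\ACTIONS\,d(\beliefbis,\ConfidenceSet(\priorbelief))$, whence $\Value_\ACTIONS(\beliefbis)-\Value_\ACTIONS(\priorbelief)\le\proscal{\beliefbis-\priorbelief}{\action^\star}+2L_\ACTIONS\,d(\beliefbis,\ConfidenceSet(\priorbelief))$, and taking expectations kills the first term by $\EE[\va\beliefbis]=\priorbelief$. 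That is the cleanest path and avoids any measurable selection.
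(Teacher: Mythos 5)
Your upper bound is correct and is essentially the paper's own argument: the quantity $\Value_{\ACTIONS}(\beliefbis)-\proscal{\beliefbis}{\action^\star}$, for $\action^\star\in\OptimalActions(\priorbelief)$, vanishes on $\ConfidenceSet(\priorbelief)$, is Lipschitz on~$\BELIEFS$, and its expectation equals $\VoI_{\ACTIONS}\np{\va{\beliefbis}}$ because the linear correction $\proscal{\va{\beliefbis}-\priorbelief}{\action^\star}$ averages to zero. Your final ``slick version'' is exactly this and indeed needs no measurable selection.

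The lower bound, however, rests on a false pointwise claim. You assert that the integrand $\Value_{\ACTIONS}(\beliefbis)-\Value_{\ACTIONS}(\priorbelief)$ is always nonnegative and is strictly positive for $\beliefbis\notin\ConfidenceSet(\priorbelief)$ (``full support forces $\Value_{\ACTIONS}(\beliefbis)>\Value_{\ACTIONS}(\priorbelief)$''). This is not true: the value function is not minimized at the prior. In Example~\ref{ex:example1}, take $\priorbelief$ putting probability $0.9$ on state~$2$ and $\beliefbis$ putting probability $0.5$; then $\Value_{\ACTIONS}(\priorbelief)=2.25$ while $\Value_{\ACTIONS}(\beliefbis)=2$, even though $\beliefbis$ lies far outside $\ConfidenceSet(\priorbelief)=[4/5,1]$. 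Only the expectation $\EE\bc{\Value_{\ACTIONS}(\va{\beliefbis})-\Value_{\ACTIONS}(\priorbelief)}$ is nonnegative, via $\EE\bc{\va{\beliefbis}}=\priorbelief$ and Jensen; Proposition~\ref{pr:Valuable_information} says nothing pointwise. The correct object to minimize over the compact set $\{\beliefbis : d(\beliefbis,\ConfidenceSet(\priorbelief))\ge\varepsilon\}$ is the excess function $\varphi_{\action}(\beliefbis)=\Value_{\ACTIONS}(\beliefbis)-\Value_{\ACTIONS}(\priorbelief)-\proscal{\beliefbis-\priorbelief}{\action}$ with $\action\in\OptimalActions(\priorbelief)$: it is pointwise nonnegative by the subgradient inequality and has the same expectation as your integrand. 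Even after this repair there is a second point you do not address: one must exhibit a \emph{single} $\action$ such that $\varphi_{\action}>0$ everywhere outside $\ConfidenceSet(\priorbelief)$, and an arbitrary optimal action will not do when $\OptimalActions(\priorbelief)$ is not a singleton. At $\priorbelief=4/5$ in Example~\ref{ex:example1}, the choice $\action=(2,2)$ gives $\varphi_{\action}\equiv 0$ on all of $[1/3,4/5]$, which is much larger than $\ConfidenceSet(4/5)=\{4/5\}$. The paper's Lemma~\ref{lem:strictly_positive} shows that taking $\action$ in the relative interior of $\OptimalActions(\priorbelief)$ works; that selection, together with the substitution of $\varphi_{\action}$ for the raw difference, is the real content of the lower bound and is missing from your proposal.
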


The upper bound tells us that the value of information is bounded by (a constant
times) the expected distance from the posterior to the confidence set at the
prior. In particular, it is bounded by the expected distance from the posterior
to the prior itself. The lower bound is a converse result, but in which we need
to replace the confidence set by some $\varepsilon$-neighborhood. It shows us
that the value of information is bounded below by (a constant times) the
probability that the posterior is at least distance~$\varepsilon$ from the
confidence set, 
and, therefore, it is also larger than the expected distance
 from the posterior to this $\varepsilon$-neighborhood of the confidence set. 
Both the lower and upper bounds depend on the confidence set
$\ConfidenceSet(\priorbelief)$ in~\eqref{eq:ConfidenceSet}, which can be computed locally at
prior belief~$\priorbelief$. On the other hand, they apply to all information structures. The
caveat is that the multiplicative constants $\ConstantSupspeciale$ and
$\ConstantInfEpsilon$ in~\eqref{eq:Valuable_information} depend on global, and
not just local, properties of the action set~$\ACTIONS$.

\subsection{Undecided} 
\label{Indifferences}

We now consider situations in which information influences actions the most. 
Those are situations of indifference in which, 
at the prior belief~$\priorbelief$, 
the agent is \emph{undecided} between several optimal actions. 
A small piece of information can then be enough to break this indifference. 
As shown by the following proposition (whose proof we do not give,
as it is well-known in convex analysis
\cite[p.~251]{Hiriart-Ururty-Lemarechal-I:1993}), the value function 
then exhibits a \emph{kink} at prior belief~$\priorbelief$.

\begin{proposition}
The two following conditions are equivalent:
\begin{itemize}
\item the set~$\OptimalActions(\priorbelief)$
of optimal actions at the prior belief~$\priorbelief$ in~\eqref{eq:face_beliefs}
contains more than one element;
\item the value function~$\Value_\ACTIONS$ in~\eqref{eq:value_function} is nondifferentiable 
(in the standard sense) at the prior belief~$\priorbelief$.
\end{itemize}
\label{pr:Indifferences}
\end{proposition}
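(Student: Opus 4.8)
The plan is to prove the equivalence between multiplicity of optimal actions at $\priorbelief$ and nondifferentiability of $\Value_\ACTIONS$ at $\priorbelief$ by invoking the standard fact from convex analysis that the value function, being a support function, has a subdifferential at $\priorbelief$ that coincides exactly with the set $\OptimalActions(\priorbelief)$ of optimal actions, and then using the characterization of differentiability of a convex function in terms of the subdifferential being a singleton. Concretely, the first step is to recall (or establish) that $\partial \Value_\ACTIONS(\priorbelief) = \OptimalActions(\priorbelief)$. Since $\Value_\ACTIONS(\belief) = \max_{\action\in\ACTIONS}\proscal{\belief}{\action}$ is the support function of the compact convex set~$\ACTIONS$, this identification is a classical result \cite[p.~218]{Rockafellar:1970}: the subgradients of a support function of a compact convex set at a point $\belief$ are precisely the maximizers of the linear form $\proscal{\belief}{\cdot}$ over that set. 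Because $\priorbelief$ has full support, it lies in the relative interior of~$\BELIEFS$, so there is no subtlety about the subdifferential being taken relative to the affine hull of the simplex rather than all of $\RR^\NATURE$; alternatively one works throughout in the affine hull of $\BELIEFS$, which is where the normal cones and faces naturally live.

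The second step is to combine this identification with the standard criterion that a finite convex function $\Value_\ACTIONS$ is differentiable at an interior point $\priorbelief$ of its domain if and only if $\partial \Value_\ACTIONS(\priorbelief)$ is a singleton \cite[Theorem~25.1]{Rockafellar:1970}. Chaining the two facts gives: $\Value_\ACTIONS$ is nondifferentiable at $\priorbelief$ $\iff$ $\partial \Value_\ACTIONS(\priorbelief)$ is not a singleton $\iff$ $\OptimalActions(\priorbelief)$ is not a singleton $\iff$ $\OptimalActions(\priorbelief)$ contains more than one element (it is always nonempty since $\ACTIONS$ is compact). This is exactly the claimed equivalence, so the proof is essentially a citation-and-assembly argument rather than a computation.

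Since the paper itself notes this proposition is well-known and declines to give a full proof, I would keep the argument to the two citations above, stated as: by the representation of $\Value_\ACTIONS$ as the support function of~$\ACTIONS$ we have $\partial\Value_\ACTIONS(\priorbelief)=\OptimalActions(\priorbelief)$, and a convex function on an open (or, here, relatively open) set is differentiable at a point precisely when its subdifferential there is a singleton; the conclusion follows. The one place to be careful — and the main conceptual obstacle if one wants to be fully rigorous — is the domain issue just mentioned: $\Value_\ACTIONS$ is defined on the simplex $\BELIEFS$, which has empty interior in $\RR^\NATURE$, so "differentiable" must be understood as differentiable along the affine hull of $\BELIEFS$ (equivalently, the subdifferential is a singleton modulo the orthogonal complement of that affine hull, i.e., modulo constant vectors $\lambda(1,\dots,1)$). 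The full-support assumption on $\priorbelief$ guarantees $\priorbelief\in\ri\BELIEFS$, which is what makes the singleton criterion applicable, and this is the only hypothesis of the proposition actually used.
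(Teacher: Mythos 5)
Your proof is correct and is precisely the standard convex-analysis argument the paper itself invokes (it cites \cite[p.~251]{Hiriart-Ururty-Lemarechal-I:1993} and declines to spell it out): identify $\partial\Value_\ACTIONS(\priorbelief)$ with $\OptimalActions(\priorbelief)$ via the support-function representation, then apply the criterion that a convex function is differentiable at a relatively interior point iff its subdifferential there is a singleton. Your attention to the relative-interior/affine-hull issue (and the observation that two distinct maximizers cannot differ by a multiple of $(1,\dots,1)$, so the face and its projection onto the tangent space of the simplex are singletons simultaneously) is a welcome precision beyond what the paper records.
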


Cases of indifference are typical of situations 
with a finite number of action choices. 
Coming back to Example~\ref{ex:example1}, 
the agent is undecided for $\priorbelief = 1/2$ and
$\priorbelief = 3/4$: at these priors, 
the agent has several optimal choices, and the value function is
nondifferentiable. 
At all other priors, the optimal choice is unique, and the value function is differentiable. 

At prior beliefs~$\priorbelief$ 
satisfying the conditions of Proposition~\ref{pr:Indifferences},
the convexity gap of the value function~$\Value_\ACTIONS$ 
is maximal in the directions in which it is nondifferentiable. 
This allows us to derive a second bound on the value of information. 
For this purpose,
we call \emph{indifference kernel}~$\IndifferenceKernel(\priorbelief)$ 
at prior belief~$\priorbelief$ the vector space of signed measures
that are orthogonal to all differences of optimal
actions~$\OptimalActions(\priorbelief)$ at~$\priorbelief$, that is, 
\begin{equation}
  \IndifferenceKernel(\priorbelief)
= \left[ \OptimalActions(\priorbelief) 
- \OptimalActions(\priorbelief) \right]^{\perp} \eqfinp
\label{eq:IndifferenceKernel}
\end{equation}
Beliefs in the 
{indifference kernel}~$\IndifferenceKernel(\priorbelief)$ 
do not break any of the ties in~$\OptimalActions(\priorbelief)$, since
\( 
\belief \in  \IndifferenceKernel(\priorbelief) \iff
\proscal{\belief}{\action} =\proscal{\belief}{\action'} \eqsepv
\forall \np{\action, \action'} \in \OptimalActions(\priorbelief)^2
\). 
We note the inclusion
\( 
\ConfidenceSet(\priorbelief)
\subset 
\IndifferenceKernel(\priorbelief) \cap \BELIEFS 
\) 
as every element in the confidence set is necessarily in the indifference kernel 
and in the simplex of probability measures.

Recall that a \emph{seminorm}
on the signed measures~$\SIGNED$ on~$\NATURE$, identified with~$\RR^{\NATURE}$, 
is a mapping  
\( \seminorm{\cdot} : \RR^{\NATURE} \to \RR_+ \) which satisfies the requirements
of a norm, except that the vector subspace \( \{ \signed \in \RR^{\NATURE} \mid
\seminorm{\signed}=0 \} \) --- 
called the \emph{kernel} of the seminorm~$\seminorm{\cdot}$
--- is not necessarily reduced to the null vector.

\begin{theorem}[Bounds on the value of information for the undecided agent]
There exists a positive constant~$\ConstantSupspeciale$ 
and a seminorm~$\Vert \cdot \Vert_{\IndifferenceKernel(\priorbelief)}$ 
with kernel~$\IndifferenceKernel(\priorbelief)$, the 
{indifference kernel} in~\eqref{eq:IndifferenceKernel},
such that, for every information structure~$\va{\beliefbis}$ as in~\eqref{eq:information_structure}:
\begin{equation}
\ConstantSupspeciale \EE \Vert \va{\beliefbis} - \priorbelief \Vert \geq 
\VoI_{\ACTIONS}\np{\va{\beliefbis}} \geq 
\VoI_{\OptimalActions(\priorbelief)}\np{\va{\beliefbis}} \geq 
\EE \Vert \va{\beliefbis} - \priorbelief 
\Vert_{\IndifferenceKernel(\priorbelief)} \eqfinp 
\label{eq:Indifferences}
\end{equation}
\label{th:Indifferences}
\end{theorem}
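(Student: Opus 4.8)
The plan is to establish the three inequalities of~\eqref{eq:Indifferences} one at a time, from left to right. The leftmost inequality is in fact an immediate consequence of the upper bound already proved in Theorem~\ref{th:Valuable_information}: since $\priorbelief$ belongs to its own confidence set $\ConfidenceSet(\priorbelief)$ (indeed every $\action \in \OptimalActions(\priorbelief)$ satisfies $\priorbelief \in \RevealedBeliefs(\action)$), we have $d\bp{\beliefbis, \ConfidenceSet(\priorbelief)} \le \seminorm{\beliefbis - \priorbelief}$ for every $\beliefbis \in \BELIEFS$, whence $\ConstantSupspeciale\, \EE\seminorm{\va\beliefbis - \priorbelief} \ge \ConstantSupspeciale\, \EE\bc{d\bp{\va\beliefbis,\ConfidenceSet(\priorbelief)}} \ge \VoI_\ACTIONS\np{\va\beliefbis}$. (Alternatively, one argues directly that $\Value_\ACTIONS$, as the support function of the compact set $\ACTIONS$, is Lipschitz, with a modulus one may take for $\ConstantSupspeciale$, and uses $\EE\bc{\va\beliefbis} = \priorbelief$.)

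For the middle inequality, the key observation is that $\OptimalActions(\priorbelief)$ is itself a nonempty compact convex subset of $\Primal$, hence a legitimate action set in the sense of~\eqref{eq:ACTIONS}, so that $\VoI_{\OptimalActions(\priorbelief)}$ is well defined and $\Value_{\OptimalActions(\priorbelief)}$ is its support function restricted to $\BELIEFS$. Monotonicity of the support function with respect to the set gives $\Value_{\OptimalActions(\priorbelief)} \le \Value_\ACTIONS$ pointwise on $\BELIEFS$, since $\OptimalActions(\priorbelief) \subset \ACTIONS$; and at the prior we have equality, $\Value_{\OptimalActions(\priorbelief)}(\priorbelief) = \Value_\ACTIONS(\priorbelief)$, because by~\eqref{eq:face_beliefs} every $\action \in \OptimalActions(\priorbelief)$ attains $\proscal{\priorbelief}{\action} = \Value_\ACTIONS(\priorbelief)$. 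Subtracting these relations inside the expectation yields $\VoI_\ACTIONS\np{\va\beliefbis} - \VoI_{\OptimalActions(\priorbelief)}\np{\va\beliefbis} = \EE\bc{\Value_\ACTIONS\np{\va\beliefbis} - \Value_{\OptimalActions(\priorbelief)}\np{\va\beliefbis}} \ge 0$.

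The lower bound is the heart of the matter. Write $\FACE = \OptimalActions(\priorbelief)$ and fix a point $\action_0$ in the relative interior $\ri\FACE$, which is nonempty because $\FACE$ is a nonempty convex set in a finite-dimensional space. Decompose the support function of $\FACE$ as $\Value_{\FACE}(\belief) = \sigma(\belief) + \proscal{\belief}{\action_0}$, where $\sigma$ is the (finite, nonnegative, sublinear) support function of the translated body $\FACE - \action_0$; note $\sigma(\priorbelief) = 0$, because all of $\FACE$ is optimal at $\priorbelief$ and hence $\proscal{\priorbelief}{\action - \action_0} = 0$ for every $\action \in \FACE$. Using $\EE\bc{\va\beliefbis} = \priorbelief$, the affine term cancels out of the value of information and we are left with $\VoI_{\OptimalActions(\priorbelief)}\np{\va\beliefbis} = \EE\bc{\sigma\np{\va\beliefbis}}$. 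Now let $L = \mathrm{span}\bp{\OptimalActions(\priorbelief) - \OptimalActions(\priorbelief)}$; one checks that $\aff(\FACE - \action_0) = L$ (independently of the chosen $\action_0 \in \FACE$) and $L^{\perp} = \IndifferenceKernel(\priorbelief)$. Since $\action_0 \in \ri\FACE$, the origin lies in the relative interior of $\FACE - \action_0$, so some ball of $L$ around the origin is contained in $\FACE - \action_0$; as $\FACE - \action_0 \subset L$, this yields a constant $\delta > 0$ with $\sigma(\belief) \ge \delta\,\seminorm{\pi(\belief)}$ for all $\belief \in \BELIEFS$, where $\pi$ denotes the orthogonal projection onto $L$. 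Finally, $\priorbelief \perp (\FACE - \action_0)$ gives $\pi(\priorbelief) = 0$, hence $\pi(\va\beliefbis - \priorbelief) = \pi(\va\beliefbis)$; so, setting $\seminorm{\belief}_{\IndifferenceKernel(\priorbelief)} := \delta\,\seminorm{\pi(\belief)}$ --- a seminorm with kernel $\defset{\belief \in \Primal}{\pi(\belief) = 0} = L^{\perp} = \IndifferenceKernel(\priorbelief)$ --- and taking expectations in the pointwise bound, we obtain $\VoI_{\OptimalActions(\priorbelief)}\np{\va\beliefbis} = \EE\bc{\sigma\np{\va\beliefbis}} \ge \EE\seminorm{\va\beliefbis - \priorbelief}_{\IndifferenceKernel(\priorbelief)}$, as claimed.

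The only genuinely delicate points all live in the lower bound: selecting $\action_0$ in the relative interior of $\OptimalActions(\priorbelief)$ and handling the orthogonal projection onto $\mathrm{span}\bp{\OptimalActions(\priorbelief) - \OptimalActions(\priorbelief)}$ correctly --- in particular, verifying that $\aff(\FACE - \action_0)$ equals this span regardless of the chosen $\action_0$ --- and checking that the seminorm $\delta\,\seminorm{\pi(\cdot)}$ has kernel exactly the indifference kernel $\IndifferenceKernel(\priorbelief)$. The cancellation $\VoI_{\OptimalActions(\priorbelief)}\np{\va\beliefbis} = \EE\bc{\sigma\np{\va\beliefbis}}$ is precisely where the averaging constraint $\EE\bc{\va\beliefbis} = \priorbelief$ of~\eqref{eq:information_structure} is used. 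Everything else --- Lipschitz continuity and monotonicity of support functions, and nonemptiness of the relative interior of a convex set --- is routine finite-dimensional convex analysis, for which the Appendix already supplies the background.
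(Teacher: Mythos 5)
Your three inequalities are all correctly established, and the first two are essentially the paper's own arguments (the upper bound via Lipschitzness of the support function, or equivalently via Theorem~\ref{th:Valuable_information} since $\priorbelief\in\ConfidenceSet(\priorbelief)$; the middle one via monotonicity of support functions plus equality at the prior, which is a slight repackaging of the paper's chain $\sigma_\ACTIONS(\signed)-\sigma_\ACTIONS(\priorbelief)\geq\sigma_{\FACE_\ACTIONS(\priorbelief)}(\signed-\priorbelief)\geq\sigma_{\FACE_\ACTIONS(\priorbelief)}(\signed)-\sigma_{\FACE_\ACTIONS(\priorbelief)}(\priorbelief)$). Where you genuinely diverge is in the third, key inequality: the paper picks a finite set $T=\{\action_1,\dots,\action_n\}\subset\OptimalActions(\priorbelief)$ generating the affine hull of the face and takes the seminorm to be $\frac1n\,\sigma_{T-T}$, obtaining the bound through Minkowski-sum algebra of support functions ($\sigma_{\sum_i(T-\action_i)}=\sum_i\sigma_{T-\action_i}$ and monotonicity, since $T-T\subset\sum_i(T-\action_i)$); you instead center the face at a relative-interior point $\action_0$, observe that the affine part drops out of the value of information by the martingale property $\EE[\va{\beliefbis}]=\priorbelief$, and then bound $\sigma_{\OptimalActions(\priorbelief)-\action_0}$ below by $\delta\,\Vert\pi(\cdot)\Vert$ using a ball of the span $L=\mathrm{span}\bp{\OptimalActions(\priorbelief)-\OptimalActions(\priorbelief)}$ contained in the translated face. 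Both constructions produce a seminorm with kernel exactly $\IndifferenceKernel(\priorbelief)=L^\perp$, which is all the theorem requires; yours yields a Euclidean-type seminorm and avoids the combinatorics of the Minkowski sum, at the cost of invoking the nonemptiness of the relative interior, while the paper's yields a polyhedral seminorm built directly from differences of optimal actions. All the delicate points you flag (that $\aff\bp{\OptimalActions(\priorbelief)-\action_0}=L$ independently of $\action_0$, that $\pi(\priorbelief)=0$ because $\priorbelief$ is orthogonal to all differences of optimal actions, and that the degenerate singleton case gives the zero seminorm with kernel all of $\SIGNED=\{0\}^\perp$) check out.
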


For $\priorbelief = 1/2$ or $\priorbelief = 3/4$ in Example~\ref{ex:example1},  
Theorem~\ref{th:Indifferences} shows that the value of information for these
priors is bounded above and below by a constant times the norm-1 between the
prior and the posterior.  
Since any small amount of information allows to break the indifference 
between the optimal actions at these priors, information is very valuable. 
 
The lower bound in Theorem~\ref{th:Indifferences}
shows that a lower bound of the value of information 
is the expectation of a seminorm of the distance between the prior belief
 and the posterior belief. To understand the role of the 
kernel~$\IndifferenceKernel(\priorbelief)$ 
of this seminorm, let us first consider the set of beliefs in this set. 
A posterior~$\beliefbis$ is 
in~$\IndifferenceKernel(\priorbelief)=
[\OptimalActions(\priorbelief)-\OptimalActions(\priorbelief)]^\perp$  
if and only if, for any two optimal actions
\( \action, \action' \in \OptimalActions(\priorbelief) \), 
\( \proscal{\beliefbis}{\action} =\proscal{\beliefbis}{\action'} \).
In words, posteriors that do not break any of the ties 
in~$\OptimalActions(\priorbelief)$ might not be valuable to the agent. 
On the other hand, Theorem~\ref{th:Indifferences}
tells us that all other directions --- 
i.e., those that allow at least one of the ties 
in~$\OptimalActions(\priorbelief)$ to be broken --- are valuable to the agent, 
and furthermore, in these directions, the value of information behaves 
like an expected distance from the prior to the posterior. 

The upper bound says that the value of information is bounded by an expected
distance from the prior to the posterior, and the inner inequality states that
the value of information with decision set~$\ACTIONS$ is at least as large as
with action set~$\OptimalActions(\priorbelief)$.  

Note that the bounds on Theorem~\ref{th:Indifferences} rely on the indifference
kernel $\IndifferenceKernel(\priorbelief)$ in~\eqref{eq:IndifferenceKernel}, 
which can be computed directly from
the set $\OptimalActions(\priorbelief)$ by~\eqref{eq:IndifferenceKernel}. 
The multiplicative constant $\ConstantSupspeciale$ in~\eqref{eq:Indifferences},
however, depends on more global properties of the action set~$\ACTIONS$.

\subsection{Flexible} 
\label{Flexible_decisions}

Finally, we consider the case in which there is a unique optimal action for each
belief in the range considered, and this action depends smoothly on the
belief. More precisely, we assume that, around the prior, optimal actions
smoothly depend in a 1-1 way on the belief. This assumption is met when, for
instance, the decision problem faced by the agent is a scoring rule
\cite{BrierScoringRules1950}, or an investment problem
\cite{Arrow71,CabralesGossnerSerranoEntropyAER2013}. 

Our first step is to characterize a class of situations of interest, in which
the agent's optimal action depends smoothly on her belief. 
The following proposition offers three alternative characterizations of these
situations, based 1) on the local behavior of the agent's optimal optimal
choices, 2) on local properties of the geometry of the boundary of the set of
actions, and 3) on local second differentiability properties of the value
function. 
For background on geometric convex analysis, 
the reader can consult~\S\ref{Recalls_on_geometric_convex_analysis}
in the Appendix. 

\begin{proposition}
Suppose that the action set~$\ACTIONS$ in~\eqref{eq:ACTIONS} 
has boundary~$\partial\ACTIONS$ 
which is a~$C^2$ submanifold of~$\RR^{\NATURE}$ 
of dimension~\( | \NATURE | -1 \).
The three following conditions are equivalent:
\begin{enumerate}
\item 
The set-valued mapping 
of optimal actions at the prior belief~$\priorbelief$ in~\eqref{eq:face_beliefs}
\begin{equation}
\OptimalActions\colon  \BELIEFS \rightrightarrows \partial\ACTIONS 
\eqsepv \belief \mapsto \OptimalActions(\belief) 
\label{eq:face_set-valued_mapping}
\end{equation}
is a local diffeomorphism\footnote{%
In particular, the set \( \OptimalActions(\belief) \) is a singleton for all 
\( \belief \in \BELIEFS \), in which case we identify a singleton set 
with its single element. 
} 
at the prior belief~$\priorbelief$;
\label{it:curved_diffeomorphism}
\item 
The set~$\OptimalActions(\priorbelief)$ of optimal actions at 
the prior belief~$\priorbelief$ in~\eqref{eq:face_beliefs} is reduced to a 
singleton at which the curvature of the 
action set~$\ACTIONS$ is positive;
\label{it:curved_face}
\item 
The value function~$\Value_{\ACTIONS}$ in~\eqref{eq:value_function} is twice differentiable at
the prior belief~$\priorbelief$, with positive definite Hessian at~$\priorbelief$.
\label{it:curved_value_function}
\end{enumerate}
In this case, we say that the agent is \emph{flexible at $\priorbelief$.}
\label{pr:Flexible_decisions}
\end{proposition}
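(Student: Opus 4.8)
The plan is to express the three conditions in terms of the \emph{Gauss map} $\gamma\colon\partial\ACTIONS\to\sphere$, which sends a boundary point to its outer unit normal (well defined and single-valued because $\partial\ACTIONS$ is a $C^{2}$ hypersurface), and of the positively $1$-homogeneous extension $h\colon\RR^{\NATURE}\to\RR$ of the value function, $h(\primal)=\max_{\action\in\ACTIONS}\proscal{\primal}{\action}$. Two preliminary remarks organize the argument. First, the hypothesis that $\partial\ACTIONS$ is a $C^{2}$ submanifold of dimension $|\NATURE|-1$ forces $\ACTIONS$ to be a full-dimensional convex body with smooth boundary, so $\gamma$ is a $C^{1}$ map and, $\ACTIONS$ being convex, its differential $d\gamma_{\action}$ --- the shape operator, a self-adjoint endomorphism of the tangent hyperplane $\gamma(\action)^{\perp}$ --- is positive semidefinite at every $\action\in\partial\ACTIONS$; ``the curvature of $\ACTIONS$ at $\action$ is positive'' means exactly that $d\gamma_{\action}$ is invertible, i.e.\ positive definite. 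Second, at a smooth boundary point the normal cone is the ray $\NORMAL_{\ACTIONS}(\action)=\RR_{+}\gamma(\action)$, so for $\belief$ close to $\priorbelief$ (which has full support) one has $\action\in\OptimalActions(\belief)\iff\gamma(\action)=\belief/\norm{\belief}$; since $\priorbelief\neq0$, the normalization $\belief\mapsto\belief/\norm{\belief}$ is a $C^{\infty}$ diffeomorphism from a neighborhood of $\priorbelief$ in $\BELIEFS$ onto a neighborhood of $\priorbelief/\norm{\priorbelief}$ in $\sphere$, and this exhibits $\OptimalActions$ near $\priorbelief$ as, essentially, a partial inverse of $\gamma$.

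For condition~\ref{it:curved_diffeomorphism} $\Leftrightarrow$ condition~\ref{it:curved_face} I would use the inverse function theorem both ways. If $\OptimalActions(\priorbelief)=\{\action_{0}\}$ and $d\gamma_{\action_{0}}$ is positive definite, then $\gamma$ is a $C^{1}$-diffeomorphism from a neighborhood of $\action_{0}$ in $\partial\ACTIONS$ onto a neighborhood of $u_{0}:=\gamma(\action_{0})=\priorbelief/\norm{\priorbelief}$ in $\sphere$; near $\action_{0}$ the boundary is strictly convex, so $\action_{0}$ is an exposed extreme point of $\ACTIONS$ that is isolated in the convex face $\OptimalActions(\belief)$ whenever $\belief/\norm{\belief}$ is close enough to $u_{0}$, whence that face is the singleton $\{\gamma^{-1}(\belief/\norm{\belief})\}$. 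Composing with the normalization shows that $\OptimalActions$ is single-valued near $\priorbelief$ and a local diffeomorphism there. Conversely, if $\OptimalActions$ is a local diffeomorphism at $\priorbelief$, it is in particular single-valued on a neighborhood, so $\gamma$ is injective on the corresponding open patch of $\partial\ACTIONS$ and carries it diffeomorphically onto a neighborhood of $u_{0}$ in $\sphere$; hence $d\gamma_{\action_{0}}$ is invertible, and being positive semidefinite it is positive definite.

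For condition~\ref{it:curved_face} $\Leftrightarrow$ condition~\ref{it:curved_value_function} I would invoke the classical pointwise duality between the second-order geometry of a convex body and the Hessian of its support function (see~\S\ref{Recalls_on_geometric_convex_analysis}): under the standing $C^{2}$ hypothesis, $h$ is twice differentiable at a unit direction $u$, with positive definite Hessian restricted to $u^{\perp}$, if and only if $\ACTIONS$ has a unique boundary point with outer normal $u$ at which the shape operator is positive definite --- and then the Hessian of $h$ on $u^{\perp}$ is, up to a positive factor, the inverse of that shape operator, while $D^{2}h(u)u=0$ by Euler's relation. It remains to pass between the ambient homogeneous function $h$ on $\RR^{\NATURE}$ and the value function $\Value_{\ACTIONS}=h|_{\BELIEFS}$: the set $\BELIEFS$ lies in the hyperplane $\{\primal\mid\proscal{\primal}{\1}=1\}$, with tangent space $T=\1^{\perp}$, so by the chain rule --- and the $1$-homogeneity of $h$, which lets one pass freely between a second-order expansion of $\Value_{\ACTIONS}$ at $\priorbelief$ and one of $h$ at $\priorbelief$ --- the intrinsic Hessian of $\Value_{\ACTIONS}$ at $\priorbelief$ is the restriction of $D^{2}h(\priorbelief)$ to $T$. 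Since $D^{2}h(\priorbelief)$ is positive semidefinite with kernel $\RR\priorbelief$, and since $\proscal{\1}{\priorbelief}=1\neq0$ gives the transversality $T\cap\RR\priorbelief=\{0\}$ with $T\oplus\RR\priorbelief=\RR^{\NATURE}$, the restriction of $D^{2}h(\priorbelief)$ to $T$ is positive definite exactly when $D^{2}h(\priorbelief)$ has full rank $|\NATURE|-1$, i.e.\ exactly when the shape operator at $\action_{0}$ is invertible; and differentiability of $\Value_{\ACTIONS}$ at $\priorbelief$ already yields, via Proposition~\ref{pr:Indifferences}, that $\OptimalActions(\priorbelief)$ is a singleton, so the two conditions match.

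I expect the main obstacle to be this last passage --- reconciling the purely pointwise notion of second-order differentiability of $\Value_{\ACTIONS}$ at the single belief $\priorbelief$ with the $C^{2}$-manifold notion of curvature of $\partial\ACTIONS$, and ensuring that the change of viewpoint between the homogeneous support function on $\RR^{\NATURE}$ and the value function on the simplex neither creates nor hides degeneracy; this is why the argument tracks the kernel $\RR\priorbelief$ of $D^{2}h(\priorbelief)$ and the transversality $\priorbelief\notin\1^{\perp}$ explicitly. The remaining ingredients --- the inverse function theorem, and the elementary fact that a convex face meeting a locally strictly convex piece of $\partial\ACTIONS$ in a single isolated point is a singleton --- are standard.
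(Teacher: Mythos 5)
Your proposal is correct and follows essentially the same route as the paper's proof: both arguments hinge on the Gauss map (spherical image map) and its reverse, the inverse function theorem applied to the Weingarten/shape operator, the identification of the gradient of the support function with the exposed face, and the normalization diffeomorphism between a neighborhood of $\priorbelief$ in $\BELIEFS$ and a patch of the unit sphere. The only differences are organizational (you prove the pairwise equivalences $1\Leftrightarrow2$ and $2\Leftrightarrow3$ where the paper runs the cycle $2\Rightarrow1\Rightarrow3\Rightarrow2$), and your explicit tracking of the kernel $\RR\priorbelief$ of the homogeneous Hessian and its transversality to $\1^{\perp}$ is, if anything, slightly more careful than the paper's corresponding step.
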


\begin{theorem}[Bounds on the VoI for the flexible agent] 
If the agent is flexible at prior belief~$\priorbelief$, then 
there exist positive constants~$\ConstantSup$ and $\ConstantInf$
such that, for every information structure~$\va{\beliefbis}$ as in~\eqref{eq:information_structure}:
\begin{equation}
  \ConstantSup \EE || \va{\beliefbis} - \priorbelief ||^2
\geq \VoI_{\ACTIONS}\np{\va{\beliefbis}} \geq 
\ConstantInf \EE || \va{\beliefbis} - \priorbelief ||^2 
\eqfinp 
\label{eq:Flexible_decisions}
\end{equation}
\label{th:Flexible_decisions}
\end{theorem}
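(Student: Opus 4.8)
The plan is to reduce the statement to a pointwise two-sided quadratic bound on the convexity gap of the value function at the prior and then integrate. Since~$\priorbelief$ has full support it lies in~$\ri\BELIEFS$, so~$\Value_\ACTIONS$ is defined on a relative neighborhood of~$\priorbelief$ in~$\aff\BELIEFS$, and all differentiability statements below are taken with respect to this~$\np{|\NATURE|-1}$-dimensional affine structure. By Proposition~\ref{pr:Flexible_decisions} (condition~\ref{it:curved_value_function}), flexibility at~$\priorbelief$ gives that~$\Value_\ACTIONS$ is twice differentiable at~$\priorbelief$ with positive definite Hessian~$H$ and a well-defined gradient~$\nabla\Value_\ACTIONS(\priorbelief)$. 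Introduce the gap function
\begin{equation}
G(\beliefbis) = \Value_\ACTIONS(\beliefbis) - \Value_\ACTIONS(\priorbelief) - \proscal{\nabla\Value_\ACTIONS(\priorbelief)}{\beliefbis - \priorbelief} \eqsepv \beliefbis \in \BELIEFS \eqfinp
\end{equation}
Then~$G$ is convex and continuous on~$\BELIEFS$ (as~$\Value_\ACTIONS$ is), nonnegative by the subgradient inequality for the convex function~$\Value_\ACTIONS$, vanishes at~$\priorbelief$, and for any information structure~$\va{\beliefbis}$ as in~\eqref{eq:information_structure}, using~$\EE\bc{\va{\beliefbis}}=\priorbelief$ to cancel the affine term,
\begin{equation}
\EE\bc{G(\va{\beliefbis})} = \EE\bc{\Value_\ACTIONS(\va{\beliefbis})} - \Value_\ACTIONS(\priorbelief) = \VoI_\ACTIONS\np{\va{\beliefbis}} \eqfinp
\end{equation}
It therefore suffices to find constants~$\ConstantInf,\ConstantSup>0$ with~$\ConstantInf\norm{\beliefbis-\priorbelief}^2 \le G(\beliefbis) \le \ConstantSup\norm{\beliefbis-\priorbelief}^2$ for every~$\beliefbis \in \BELIEFS$, and to take expectations.

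I would prove this pointwise bound in two regimes. Near the prior, the second-order Taylor expansion reads~$G(\priorbelief+h) = \tfrac12\proscal{h}{Hh} + o(\norm{h}^2)$; bounding~$\proscal{h}{Hh}$ between the extreme eigenvalues of~$H$ and absorbing the remainder yields some~$\delta>0$ and constants~$c_1,c_2>0$ with~$c_1\norm{\beliefbis-\priorbelief}^2 \le G(\beliefbis) \le c_2\norm{\beliefbis-\priorbelief}^2$ whenever~$\beliefbis\in\BELIEFS$ and~$\norm{\beliefbis-\priorbelief}<\delta$. Away from the prior I use convexity: the lower estimate just obtained makes~$\priorbelief$ a strict local minimizer of~$G$, and a convex nonnegative function on~$\BELIEFS$ with a strict local minimum at~$\priorbelief$ has~$\priorbelief$ as its unique minimizer — otherwise~$G$ would vanish identically on a nondegenerate segment issued from~$\priorbelief$, contradicting the strict local minimum. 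Hence~$G>0$ on the compact set~$\{\beliefbis\in\BELIEFS : \norm{\beliefbis-\priorbelief}\ge\delta\}$, on which it attains a minimum~$m>0$ and a maximum~$M<\infty$; with~$\rho$ the diameter of~$\BELIEFS$ this gives~$(m/\rho^2)\norm{\beliefbis-\priorbelief}^2 \le G(\beliefbis) \le (M/\delta^2)\norm{\beliefbis-\priorbelief}^2$ there. Taking~$\ConstantInf := \min\{c_1,m/\rho^2\}$ and~$\ConstantSup := \max\{c_2,M/\delta^2\}$ gives the pointwise inequality on all of~$\BELIEFS$; integrating it against the law of~$\va{\beliefbis}$ and invoking~$\EE\bc{G(\va{\beliefbis})}=\VoI_\ACTIONS\np{\va{\beliefbis}}$ yields~\eqref{eq:Flexible_decisions}.

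The one delicate point is passing from the local quadratic estimate near~$\priorbelief$ to a global one valid at posteriors that may be far from the prior; this is exactly where global convexity of~$\Value_\ACTIONS$ enters, through the observation that such a function cannot have its gap at~$\priorbelief$ vanish anywhere else once it has a strict local minimum there, so the gap stays bounded below by a positive constant (and above by compactness) outside any fixed neighborhood. Everything else is the routine second-order Taylor estimate together with the elementary bound~$\delta^2\le\norm{\beliefbis-\priorbelief}^2\le\rho^2$ on the simplex. As with the previous theorems, the constants~$\ConstantSup$ and~$\ConstantInf$ depend on global features of~$\ACTIONS$ — through~$m$, $M$, and the radius~$\delta$ on which the second-order expansion is quantitatively controlled — whereas the flexibility hypothesis and the quadratic rate itself are local at~$\priorbelief$.
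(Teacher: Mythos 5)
Your proposal is correct and follows essentially the same route as the paper: subtract the tangent affine function at~$\priorbelief$ so that the value of information becomes the expected convexity gap, obtain the quadratic behavior near the prior from the positive definite Hessian, and extend it globally by showing (via the same convexity-segment contradiction) that the gap is strictly positive away from~$\priorbelief$ and then invoking compactness of~$\BELIEFS$. The paper packages the two regimes into a single continuous positive ratio~$g(\belief)=\bp{\Value_{\ACTIONS}(\belief)-\Value_{\ACTIONS}(\priorbelief)-\proscal{\belief-\priorbelief}{\action\opt}}/\norm{\belief-\priorbelief}^2$ whose min and max over~$\BELIEFS$ give the constants, but this is only a cosmetic difference from your near/far split.
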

Theorem~\ref{th:Flexible_decisions} shows that, in the case of a flexible agent,
the value of information is essentially given by the expected square distance
between the prior and the posterior, up to some multiplicative constant. One of
the strengths of the theorem is that its assumption that the agent is flexible
is a local one, whereas its conclusion is global, as it applies to all
information structures. On the other hand, the multiplicative
constants~$\ConstantSup$ and $\ConstantInf$ in~\eqref{eq:Flexible_decisions}
themselves depend on the global behavior of the value function, 
and hence cannot be inferred from local properties only.

\section{An insurance example}
\label{sec:insure}

In this example, we study an insurance problem
and illustrate how the results of Sect.~\ref{sec:value_info} apply.
The insuree chooses whether to insure, or not, 
and at which indemnity level to insure if she does. 
The uncertainty is about the level of risk she incurs, 
and she may receive some partial information about it.

\begin{example}
The model is drawn from the classical insurance framework  
(see \cite{Bernoulli1738,EekhoudtGollierSchlesinger2005}).  

An insuree faces the decision of partially or 
fully insuring a good of value~$\varpi$ against the possibility of its total loss. 
Pricing is assumed to be linear, so that, 
for an indemnity~$I$, the insurance company charges
\begin{equation}
P(I) = \alpha I+f \text{ where } \alpha \in ]0,1[ \eqsepv f>0
\eqfinp
\end{equation}
In exchange for the premium~$P(I)$, the
insuree gets compensation of an amount~$I$ from the insurance company in case of
a loss. For the range of wealth~$\wealth$ considered, the insuree's utility
function~$\utility$ is considered to have 
constant absolute risk aversion~$\riskaversion$,
that is, 
\begin{equation}
\utility(\wealth)=1-e^{-\riskaversion\wealth} 
\eqfinp 
\end{equation}
By~\eqref{eq:ACTIONS}, 
the set of \emph{actions} is the closed convex hull
\begin{equation}
\ACTIONS = \overline{\textrm{co}} 
\Ba{ \Bp{ \utility\np{\varpi} , \utility\np{0} } \eqsepv
\Bp{ \utility\bp{-P(I)+\varpi} , \utility\bp{-P(I)+I} } }
\label{eq:ACTIONS_insurance}
\end{equation}
where, by convention, the first coordinate corresponds to no loss 
and the second corresponds to the loss. 

The insuree's subjective perception that a loss may arise 
is~$\belief\in ]0,1[$, probability of loss. The insuree
chooses either not to insure, 
and obtains expected utility
\begin{subequations}
\begin{equation}
\Utility_0\np{\belief}= 
(1-\belief) \utility\np{\varpi} + \belief \utility\np{0} 
=  (1-\belief) \bp{ 1 - e^{-\riskaversion\varpi} } 
\eqfinv 
\end{equation}
or to insure for an indemnity~$I> 0$ that maximizes the expected utility 
\begin{equation}
\Utility\np{\belief,I}=
(1-\belief)\utility\bp{-P(I)+\varpi} 
+ \belief \utility\bp{-P(I)+I} 
=
1-\belief e^{-\riskaversion\bp{-P(I)+I}} -
(1-\belief)e^{-\riskaversion\bp{-P(I)+\varpi}} 
\eqfinp  
\label{eq:EU_EekhoudtGollierSchlesinger2005_b} 
\end{equation}
\label{eq:EU_EekhoudtGollierSchlesinger2005}
\end{subequations}
\end{example}	
The question now becomes whether no insurance
or a positive level of indemnity is chosen. 

\begin{proposition}
There exists a threshold belief $\belief^* \in ]0,1[$ 
and a smooth function \( \hat{I} : [\belief^*,1] \to ]0,+\infty[ \) 
such that
\begin{enumerate}
\item 
for $\belief < \belief^*$, it is optimal not to insure,
\item 
for $\belief = \belief^*$, 
the insuree is indifferent between no insurance and
  insurance at the positive indemnity level~$\hat{I}(\belief^*)$, 
\item 
for~$\belief > \belief^*$, it is optimal 
to insure at the positive indemnity level~$\hat{I}\np{\belief} $.
\end{enumerate}
\label{prop:insurance}
\end{proposition}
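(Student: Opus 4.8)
The plan is to reduce, for each belief $\belief \in {]0,1[}$, the insuree's problem to a comparison between two real numbers: the payoff $\Utility_0(\belief)$ of not insuring, and the best payoff $W(\belief) := \sup_{I>0} \Utility(\belief,I)$ attainable by insuring at a strictly positive indemnity (insuring at $I=0$ is irrelevant, since $f>0$ and $\utility$ is strictly increasing force $\Utility(\belief,0) < \Utility_0(\belief)$). The threshold $\belief^*$ is then the belief at which the larger of the two quantities switches from $\Utility_0$ to $W$, and the key technical point will be that this switch happens exactly once.

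\emph{Step 1: the optimal indemnity and the sign threshold $\belief_0$.} Fix $\belief$ and regard $I \mapsto \Utility(\belief,I)$ as a function on $\RR$. Differentiating~\eqref{eq:EU_EekhoudtGollierSchlesinger2005_b} twice, $\partial^2_I \Utility(\belief,I) = -\belief\riskaversion^2(1-\alpha)^2 e^{-\riskaversion((1-\alpha)I-f)} - (1-\belief)\riskaversion^2\alpha^2 e^{-\riskaversion(\varpi-\alpha I-f)} < 0$, so $\Utility(\belief,\cdot)$ is strictly concave and its first-order condition admits the unique solution $\hat{I}(\belief) = \varpi + \riskaversion^{-1}\ln\bp{\belief(1-\alpha)/((1-\belief)\alpha)}$, which is the global maximizer. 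This $\hat{I}$ is $C^\infty$ and strictly increasing on ${]0,1[}$, with $\hat{I}(\belief)\to-\infty$ as $\belief\to0$ and $\hat{I}(\belief)\to+\infty$ as $\belief\to1$ (recall $\riskaversion>0$). Hence there is a unique $\belief_0 \in {]0,1[}$ with $\hat{I}(\belief_0)=0$, and $\hat{I}(\belief)>0 \iff \belief>\belief_0$. As a consequence, for $\belief\le\belief_0$ the strictly concave map $\Utility(\belief,\cdot)$ is decreasing on $[0,+\infty[$, so $W(\belief)=\Utility(\belief,0)<\Utility_0(\belief)$ and not insuring is strictly optimal; while for $\belief>\belief_0$ we have $W(\belief)=\Utility(\belief,\hat{I}(\belief))$, attained at the interior point $\hat{I}(\belief)>0$.

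\emph{Step 2: the switch happens exactly once.} It remains to compare $W$ and $\Utility_0$ on ${]\belief_0,1[}$. The crucial structural observation is that $W$ is convex on ${]0,1[}$: it is a finite pointwise supremum of the maps $\belief\mapsto\Utility(\belief,I)$, each of which is affine in $\belief$. Hence $\psi := W - \Utility_0$ is convex. By Step~1, $\psi < 0$ on ${]0,\belief_0]}$. On the other hand, for every fixed $I_1$ one has $W(\belief) \ge \Utility(\belief,I_1) \to 1 - e^{-\riskaversion((1-\alpha)I_1 - f)}$ as $\belief\to1$, so $\liminf_{\belief\to1} W(\belief) \ge 1$, whereas $\Utility_0(\belief)\to0$; thus $\psi > 0$ on a neighbourhood of $1$. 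By the intermediate value theorem $\psi$ vanishes somewhere in ${]\belief_0,1[}$, and it vanishes there at most once: if $\psi(a)=\psi(b)=0$ with $\belief_0<a<b<1$, then picking any $\belief_1\le\belief_0$ and writing $a=\lambda\belief_1+(1-\lambda)b$ with $\lambda\in{]0,1[}$, convexity would give $\psi(a)\le\lambda\psi(\belief_1)+(1-\lambda)\psi(b)=\lambda\psi(\belief_1)<0$, a contradiction. Let $\belief^*\in{]\belief_0,1[}$ denote this unique zero.

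\emph{Step 3: conclusion.} For $\belief<\belief^*$ we get $\Utility_0(\belief)\ge W(\belief)$ --- strictly for $\belief\le\belief_0$ by Step~1, and strictly for $\belief\in{]\belief_0,\belief^*[}$ because the convex function $\psi$ lies below its chord joining $(\belief_0,\psi(\belief_0))$ and $(\belief^*,0)$, which is negative there --- so it is optimal not to insure. At $\belief=\belief^*$ one has $\Utility_0(\belief^*)=W(\belief^*)=\Utility(\belief^*,\hat{I}(\belief^*))$ with $\hat{I}(\belief^*)>0$, so the insuree is exactly indifferent between not insuring and insuring at the positive indemnity $\hat{I}(\belief^*)$. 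For $\belief>\belief^*$ one has $W(\belief)>\Utility_0(\belief)$ (again by uniqueness of the zero of $\psi$ together with $\psi>0$ near $1$), and the unique optimal positive indemnity is $\hat{I}(\belief)$; its smoothness on $[\belief^*,1[$ is immediate from the closed form obtained in Step~1. I expect the genuine obstacle to be precisely the uniqueness of $\belief^*$: convexity of $\psi$ by itself permits two sign changes, and the argument really uses that $\psi$ is strictly negative on an entire left-interval ${]0,\belief_0]}$ --- which is why the degenerate limits $I\downarrow0$ (the $f>0$ fixed cost) and $\belief\to1$ have to be analyzed with some care.
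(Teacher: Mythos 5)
Your proof is correct, and it reaches the conclusion by a genuinely different route at the one step that carries all the weight, namely the uniqueness of the threshold. The paper defines $\belief^{*}=\inf\{\belief : \delta(\belief)>0\}$ for $\delta(\belief)=\max_{I\ge 0}\Utility(\belief,I)-\Utility_0(\belief)$ and then proves $\delta>0$ to the right of $\belief^{*}$ by an explicit telescoping computation: it bounds $\delta(\belief)-\delta(\belief^{*})$ below by $(\belief-\belief^{*})$ times a sum of two increments of the increasing utility~$\utility$, which requires separately verifying the sign facts $P\bigl(\hat I(\belief^{*})\bigr)\ge 0$ and $-P\bigl(\hat I(\belief^{*})\bigr)+\hat I(\belief^{*})\ge 0$ (the latter by contradiction). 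You instead observe that $\psi=W-\Utility_0$ is convex on $]0,1[$, being a pointwise supremum of functions affine in the belief minus an affine function, and that a convex function which is strictly negative on an entire left interval $]0,\belief_0]$ and positive near $1$ crosses zero exactly once; your chord argument for $\belief\in\,]\belief_0,\belief^{*}[$ and the strict-convex-combination argument for $\belief>\belief^{*}$ are both sound, as is the $\liminf_{\belief\to 1}W(\belief)\ge 1$ step via a fixed $I_1$ (which also spares you the paper's closed-form evaluation of $\Utility\bigl(\belief,\hat I(\belief)\bigr)$). Your route is shorter and more conceptual --- it is in fact the same sup-of-affine-functions structure that makes the value function $\Value_{\ACTIONS}$ convex in the paper's general duality framework, here applied to the restricted action set ``insure at some $I>0$'' --- while the paper's computation is more elementary and incidentally yields a quantitative linear lower bound on $\delta$ to the right of $\belief^{*}$. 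One cosmetic remark: the codomain issue at $\belief=1$ (where $\hat I(\belief)\to+\infty$) is present in the statement itself, and your restriction to $[\belief^{*},1[$ is the right reading.
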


\begin{proof} 
It is easy to see that the function 
\( I \in \RR \mapsto \Utility\np{\belief,I} \) 
in~\eqref{eq:EU_EekhoudtGollierSchlesinger2005_b}
is strictly concave with a unique maximum, characterized by 
\( \partial{\Utility}/\partial{I}=0 \), and achieved at 
\begin{equation}
\hat{I}\np{\belief} = \varpi - \frac1\riskaversion 
\ln(\frac{1-\belief}{\belief}\frac{\alpha}{1-\alpha}) 
\eqsepv \forall \belief\in ]0,1[
\eqfinp   
\label{eq:maximizer}
\end{equation}
We denote by~$\hat{\belief}$ the unique $\belief\in ]0,1[$ such that 
\( \hat{I}\np{\belief} > 0 \iff \belief > \hat{\belief} \).
To determine whether no insurance
or a nonnegative level of indemnity is chosen, we introduce
the difference of expected utilities
\begin{equation}
  \delta\np{\belief} = 
\max_{I \geq 0}\Utility\np{\belief,I} - \Utility_0\np{\belief} 
=
\begin{cases}
\Utility\np{\belief,0} - \Utility_0\np{\belief} 
& \text{if } \belief \leq \hat{\belief} 
\eqfinv
\\
  \Utility\bp{\belief,\hat{I}\np{\belief}} - \Utility_0\np{\belief} 
& \text{if } \belief \geq \hat{\belief} 
\eqfinp
\end{cases}
\label{eq:delta}
\end{equation}
We study the behavior of the function~$\delta$ 
when $\belief$ is small and when $\belief$ is close to one.
After computation, we find that, for all $\belief\in [0,1]$ ,
\( U(p,0) - \Utility_0\np{\belief} = - \bp{e^{Rf} - 1 } 
\bp{ \belief + \np{1-\belief} e^{-R\varpi}} < 0 \). 
Therefore, \( \delta\np{\belief} < 0 \) 
for all \( \belief \leq \hat{\belief} \).
On the other hand, when $\belief$ goes to~1, 
\( \delta\np{\belief} \) goes to~1 
because $\Utility_0\np{\belief} \to 0$ and
\( \Utility\bp{\belief,\hat{I}\np{\belief}} =
\np{1-\belief}
\Bp{ 1-e^{-\riskaversion\bp{-P\np{\hat{I}\np{\belief}}+\varpi } } }
+ \belief
\Bp{ 1-e^{-\riskaversion\bp{ -P\np{\hat{I}\np{\belief}} + \hat{I}\np{\belief} }
  } }
= 1 - \np{1-\belief}
\bp{\frac{1-\belief}{\belief}\frac{\alpha}{1-\alpha}}^\alpha
e^{\riskaversion(1-\alpha)\varpi} 
- \belief
\bp{\frac{1-\belief}{\belief}\frac{\alpha}{1-\alpha}}^{1-\alpha}
e^{-\riskaversion(1-\alpha)\varpi}  \to~1 \) (as $\alpha \in ]0,1[$).
As a consequence, we can define 
\( \belief^* = \inf \defset{ \belief\in [0,1] }{ \delta\np{\belief} > 0 } \),
which belongs to \( [\hat{\belief}, 1[ \). Indeed, 
since \( \delta\np{\belief} < 0 \) for \( \belief \leq \hat{\belief} \),
we deduce that \( \belief^* \geq \hat{\belief} \); and
\( \belief^* < 1 \) because \( \delta\np{\belief} \to~1 \) 
when \( \belief \to~1 \). 
We now check that \( \belief^* \) 
and \( \hat{I} \) in~\eqref{eq:maximizer}
satisfy the three assertions of the Proposition.
\medskip

By definition of~\( \belief^* \) and of the function~$\delta$, 
for $\belief < \belief^*$, it is optimal not to insure.

As the function~$\delta$ is continuous, we have 
\( \delta\np{\belief^*}=0 \) and
the insuree is indifferent between no insurance and
  insurance at the positive indemnity level~$\hat{I}(\belief^*)$.

To finish, we will now show that \( \delta\np{\belief} >0 \) when $\belief >
\belief^*$, leading to the conclusion that it is optimal 
to insure at the positive indemnity level~$\hat{I}\np{\belief} $.
Indeed, for $\belief > \belief^*$, we have 
\begin{align*}
  \delta\np{\belief}
&=
\delta\np{\belief}-\delta\np{\belief^*} 
\quad \text{ as \( \delta\np{\belief^*}=0 \) }
\\
&=
\Utility\bp{\belief,\hat{I}\np{\belief}}
  -\Utility\bp{\belief,\hat{I}\np{\belief^*}} 
+ \Utility\bp{\belief,\hat{I}\np{\belief^*}} 
- \Utility_0\np{\belief} 
- \bc{ \Utility\bp{\belief^*,\hat{I}\np{\belief^*}} 
- \Utility_0\np{\belief^*} }
\text{ by~\eqref{eq:delta} }
\\
& >
\Utility\bp{\belief,\hat{I}\np{\belief^*}} 
- \Utility_0\np{\belief} 
-\Utility\bp{\belief^*,\hat{I}\np{\belief^*}} 
+ \Utility_0\np{\belief^*} 
\text{ \qquad as } \Utility\bp{\belief,\hat{I}\np{\belief}}
  -\Utility\bp{\belief,\hat{I}\np{\belief^*}} > 0 
\\
& \text{ by definition of the maximizer~$\hat{I}\np{\belief}$}
\text{ and since \( \hat{I}\np{\belief} > \hat{I}\np{\belief^*} \geq 0 \) 
as $\belief > \belief^* \geq \hat{\belief} $}
\\
&=
(1-\belief) \bc{ \utility\bp{-P\np{\hat{I}\np{\belief^*}}+\varpi} - \utility\np{\varpi} }
+ \belief \bc{ \utility\bp{-P\np{\hat{I}\np{\belief^*}}+\hat{I}\np{\belief^*}} -
  \utility\np{0} }
\\
&-
(1-\belief^*) \bc{ \utility\bp{-P\np{\hat{I}\np{\belief^*}}+\varpi} - \utility\np{\varpi} }
- \belief^* \bc{ \utility\bp{-P\np{\hat{I}\np{\belief^*}}+\hat{I}\np{\belief^*}} -
  \utility\np{0} }
\text{ by~\eqref{eq:EU_EekhoudtGollierSchlesinger2005} }
\\
&=
\np{ \belief - \belief^* } \Bc{ 
\bc{ \utility\bp{-P\np{\hat{I}\np{\belief^*}}+\hat{I}\np{\belief^*}} -
  \utility\np{0} }
+
\bc{ \utility\np{\varpi} 
- \utility\bp{-P\np{\hat{I}\np{\belief^*}}+\varpi} } 
} \geq 0 
\end{align*}
since both terms between inner brackets are increments of the increasing
function~$\utility$, where 
\( -P\np{\hat{I}\np{\belief^*}}+\hat{I}\np{\belief^*} \geq 0 \)
(to be seen below) 
and \( P\np{\hat{I}\np{\belief^*}} \geq 0 \) 
(because \( \hat{I}\np{\belief^*} \geq 0 \)). 
If we had \( -P\np{\hat{I}\np{\belief^*}}+\hat{I}\np{\belief^*}
< 0 \), we would arrive at the contradiction that 
\( 0=\delta\np{\belief^*}=(1-\belief^*) 
\bc{ \utility\bp{-P\np{\hat{I}\np{\belief^*}}+\varpi} - \utility\np{\varpi} }
+ \belief^* \bc{ \utility\bp{-P\np{\hat{I}\np{\belief^*}}+\hat{I}\np{\belief^*}} -
  \utility\np{0} } < 0 \) since both terms between brackets are 
(negative) increments of the increasing
function~$\utility$. 
\end{proof}
\bigskip

Now, we assume that the insuree has access to a small piece of information
concerning her probability of loss. Once informed, she discovers that the
probability $\beliefbis$ of a loss is  either~$\belief-\varepsilon$
or~$\belief+\varepsilon$, where both possibilities are equally likely and
$\varepsilon>0$ is a small positive number. Let~$\Value(\beliefbis)$ be the utility of the
insuree with beliefs~$\beliefbis$, once the optimal policy is chosen: 
\begin{equation}
\Value(\beliefbis) = \max\left\{ \Utility_0(q), \max_{I\geq 0}U(q,I) \right\} 
\eqfinp 
\label{eq:value_function_insurance}
\end{equation}
As $\Value$ is the value function in~\eqref{eq:value_function}, 
the value of information in the decision problem is defined as the expected
utility with the information minus the expected utility absent the information,
as in~\eqref{eq:VoI}: 
\begin{equation}
\VoI(\varepsilon) = \frac12 \Value(\belief+\varepsilon) + 
\frac12\Value(\belief-\varepsilon)- \Value(\belief) 
\eqfinp 
\end{equation}
Note that $\VoI (\varepsilon)$ measures the value of information in terms of
utility; the equivalent measure in monetary terms would be~$-\frac1\riskaversion
\ln(1-\VoI(\varepsilon))$. The following proposition characterizes the value of
a small amount of information, in terms of the agent's optimal insurance
behavior.  

\begin{proposition} 
Depending on the probability of loss~$\belief$, the value of information for
small~$\varepsilon$ behaves as follows: 
\begin{enumerate}
\item 
In the confident case, 
for $\belief< \belief^*$, $\VoI(\varepsilon) = 0$ for small~$\varepsilon$,
\item 
In the undecided case,
for~$\belief = \belief^*$, $\VoI(\varepsilon) \sim C^* \varepsilon$ for a constant~$C^* >0$,
\item 
In the flexible case, 
for~$\belief > \belief^*$, 
$\VoI(\varepsilon) \sim  C\np{\belief} \varepsilon^2$ for a constant~$C\np{\belief}>0$.
\end{enumerate}
\end{proposition}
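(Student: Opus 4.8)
The plan is to apply the three main theorems of Section~\ref{sec:value_info} directly to the insurance decision problem, once we have verified that the hypotheses of each theorem hold at the relevant prior~$\belief$. The one-dimensional structure here (the belief is a scalar~$\belief \in\, ]0,1[$) makes everything concrete: the value function $\Value$ in~\eqref{eq:value_function_insurance} is a convex function of one real variable, and all the abstract convex-analytic notions (confidence set, indifference kernel, curvature) reduce to classical calculus statements. First I would record, from Proposition~\ref{prop:insurance}, that on $[0,\belief^*[$ one has $\Value(\beliefbis)=\Utility_0(\beliefbis)$, which is affine in~$\beliefbis$; on $]\belief^*,1[$ one has $\Value(\beliefbis)=\Utility\bp{\beliefbis,\hat I(\beliefbis)}$, which I claim is strictly convex and $C^2$; and at $\belief=\belief^*$ the two branches meet with different slopes, producing a kink.

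For part~(1), the \emph{confident case} $\belief<\belief^*$: since $\Value$ is affine on the neighborhood $]\belief-\varepsilon,\belief+\varepsilon[\,\subset\,[0,\belief^*[$ for $\varepsilon$ small, the unique optimal action (``do not insure'') is optimal on this whole neighborhood, so $\ConfidenceSet(\belief)$ contains it and by Proposition~\ref{pr:Valuable_information} the value of information is zero; equivalently, $\tfrac12\Value(\belief+\varepsilon)+\tfrac12\Value(\belief-\varepsilon)=\Value(\belief)$ by affinity. For part~(2), the \emph{undecided case} $\belief=\belief^*$: Proposition~\ref{prop:insurance}(2) says $\OptimalActions(\belief^*)$ contains at least two distinct actions (no insurance, and insurance at $\hat I(\belief^*)$), so by Proposition~\ref{pr:Indifferences} the value function has a kink at $\belief^*$. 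Concretely, letting $m_-$ and $m_+$ be the left and right derivatives of $\Value$ at $\belief^*$ (with $m_+>m_-$ by strict convexity / the kink), a second-order Taylor expansion of each branch gives $\tfrac12\Value(\belief^*+\varepsilon)+\tfrac12\Value(\belief^*-\varepsilon)-\Value(\belief^*)=\tfrac12(m_+-m_-)\varepsilon+o(\varepsilon)$, so $\VoI(\varepsilon)\sim C^*\varepsilon$ with $C^*=\tfrac12(m_+-m_-)>0$. (One could alternatively invoke Theorem~\ref{th:Indifferences}, whose seminorm on $\RR$ is just $C\cdot|\cdot|$, to get the two-sided linear bound and hence the asymptotic order; the exact constant still requires the one-sided-derivative computation.)

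For part~(3), the \emph{flexible case} $\belief>\belief^*$: here I would verify hypothesis~\ref{it:curved_value_function} of Proposition~\ref{pr:Flexible_decisions}, namely that $\Value$ is twice differentiable at $\belief$ with $\Value''(\belief)>0$. Since $\Value(\beliefbis)=\Utility\bp{\beliefbis,\hat I(\beliefbis)}$ near $\belief$ with $\hat I$ smooth and $\hat I(\beliefbis)$ the interior maximizer, the envelope theorem gives $\Value'(\beliefbis)=\partial\Utility/\partial\beliefbis$ evaluated at the optimum, and differentiating again (using $\partial\Utility/\partial I=0$ at the optimum) yields $\Value''(\beliefbis)=\partial^2\Utility/\partial\beliefbis^2-(\partial^2\Utility/\partial\beliefbis\partial I)^2/(\partial^2\Utility/\partial I^2)$ at the optimum; since $\Utility$ is strictly concave in $I$ the denominator is negative, and one checks the resulting expression is strictly positive (this is the one genuine computation). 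Then Theorem~\ref{th:Flexible_decisions} — or, more precisely for the \emph{asymptotic} rather than just the two-sided bound, a direct second-order Taylor expansion $\Value(\belief\pm\varepsilon)=\Value(\belief)\pm\Value'(\belief)\varepsilon+\tfrac12\Value''(\belief)\varepsilon^2+o(\varepsilon^2)$ — gives $\VoI(\varepsilon)=\tfrac12\Value''(\belief)\varepsilon^2+o(\varepsilon^2)$, so $\VoI(\varepsilon)\sim C(\belief)\varepsilon^2$ with $C(\belief)=\tfrac12\Value''(\belief)>0$.

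The main obstacle is the smoothness and strict-convexity verification underlying part~(3): one must confirm that $\beliefbis\mapsto\Utility\bp{\beliefbis,\hat I(\beliefbis)}$ is genuinely $C^2$ (which follows from smoothness of $\hat I$ on $[\belief^*,1]$, already asserted in Proposition~\ref{prop:insurance}) and, more delicately, that $\Value''(\belief)>0$ strictly — i.e., that the agent is flexible in the sense of Proposition~\ref{pr:Flexible_decisions}. This amounts to the explicit CARA computation of the envelope-second-derivative formula above; given the exponential form of $\utility$ and the closed form~\eqref{eq:maximizer} for $\hat I$, this reduces to showing a certain positive combination of exponentials is positive, which it plainly is, but it is the only step where the special structure of the example is really used rather than the general machinery of Section~\ref{sec:value_info}.
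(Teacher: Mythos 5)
Your proof is correct, and it reaches the same conclusions as the paper, but the route differs in two respects worth noting. In the flexible case the paper verifies item~\ref{it:curved_diffeomorphism} of Proposition~\ref{pr:Flexible_decisions}: it writes the optimal action explicitly as the point $\bp{1-e^{-\riskaversion(-P(\hat{I}(\belief))+\varpi)},\,1-e^{-\riskaversion(-P(\hat{I}(\belief))+\hat{I}(\belief))}}$ and observes that the curve $\belief\mapsto\OptimalActions(\belief)$ has nonvanishing derivative, hence is a local diffeomorphism onto its image in $\partial\ACTIONS$; you instead verify the equivalent item~\ref{it:curved_value_function} via the envelope theorem, computing $\Value''=\Utility_{\belief\belief}-\Utility_{\belief I}^2/\Utility_{II}$ at the optimum. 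Since $\Utility$ is affine in the belief, $\Utility_{\belief\belief}=0$ and your condition $\Value''>0$ reduces to $\Utility_{\belief I}\neq 0$, which (via $\hat{I}'=-\Utility_{\belief I}/\Utility_{II}$) is exactly the paper's condition $\hat{I}'(\belief)\neq 0$ — so the two verifications are the same computation in different coordinates, though yours has the minor advantage of not leaning on the global $C^2$-boundary hypothesis of Proposition~\ref{pr:Flexible_decisions} (which the convex hull~\eqref{eq:ACTIONS_insurance}, with its flat faces and corner at the no-insurance point, does not literally satisfy; the paper's ``onto its image'' parenthesis is papering over the same issue). The second difference is that you supplement the theorems with direct one-sided and second-order Taylor expansions to produce the precise asymptotic constants $C^*=\tfrac12(m_+-m_-)$ and $C(\belief)=\tfrac12\Value''(\belief)$. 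This is actually needed for the statement as written: Theorems~\ref{th:Indifferences} and~\ref{th:Flexible_decisions} give two-sided bounds with \emph{different} multiplicative constants, hence only the order $\VoI(\varepsilon)\asymp\varepsilon$ or $\asymp\varepsilon^2$, not the asymptotic equivalence $\VoI(\varepsilon)\sim C\varepsilon$ claimed in the proposition; the paper's ``immediate consequences'' phrasing glosses over this, and your explicit expansions close that gap. Your treatment of the confident case (affinity of $\Value$ on a neighborhood of $\belief<\belief^*$, so the posterior stays in the confidence set) matches the paper's intent exactly.
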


\begin{proof}
	The confident and undecided cases are immediate consequences of
        Theorems~\ref{th:Valuable_information} and~\ref{th:Indifferences},
        together with Proposition \ref{prop:insurance}. In the flexible case,
        the optimal indemnity level is given by $\hat{I}\np{\belief}>0$, 
and the function \( \hat{I} : ]\belief^*,1] \to ]0,+\infty[ \)
in~\eqref{eq:maximizer}
is differentiable with $\frac{d\hat{I}\np{\belief}}{d\belief}\neq 0$. 
The set of optimal actions $\OptimalActions\np{\belief}$ 
in~\eqref{eq:face_beliefs} is reduced to the single point
\(\OptimalActions\np{\belief} = \)
\( \Bp{ 1-e^{-\riskaversion\bp{-P\bp{\hat{I}\np{\belief}}+\varpi ) } },
1-e^{-\riskaversion\bp{ -P\bp{\hat{I}\np{\belief}} + \hat{I}\np{\belief} } } } \).
As the curve $\belief \in ]\belief^*,1] \mapsto \OptimalActions\np{\belief}$ 
has a derivative that never vanishes, we deduce that it is a local
diffeomorphism (onto its image in~\( \partial \ACTIONS \))
at~$\belief$, and Theorem \ref{th:Flexible_decisions} applies.
\end{proof}
\bigskip

The results of Proposition~\ref{prop:insurance} are intuitive. First, a small
piece of information is valueless if the agent is not buying insurance. For such
agents, a small bit of information does not affect behavior, as even bad news is
not enough to trigger insurance purchase. For an undecided agent who is
indifferent between no insurance and insurance at a positive indemnity
level~$I(\belief^*)$, a small piece of information is enough to break the
indifference and significantly influences her behavior; this is the
situation in which information is the most valuable. Finally, for an agent who
takes a positive level of indemnity, information may affect the level of
indemnity chosen. But, because the change of indemnity level is itself of order
$\varepsilon$, and the indemnity level~$I(\belief^*)$ is $\varepsilon$-optimal
at the posterior, the value of information is a second order in $\varepsilon$. 

Figure~\ref{fig:insurance} represents the set~$\ACTIONS$ of
actions~\eqref{eq:ACTIONS_insurance} to the left, and the corresponding
value function~$\Value=\Value_{\ACTIONS}$ in~\eqref{eq:value_function_insurance}
to the right. 
In the representation of~$\ACTIONS$, the horizontal axis
corresponds to the payoff without loss, and the vertical axis to the payoff in
case of a loss. The circled dot to the right corresponds to the choice of no
insurance; it maximizes payoff in case of no loss. The thick curve represents
the set of payoffs that are achieved by different coverage levels. Finally,
$\ACTIONS$ is the convex hull of this set of points; it appears under the dashed
contour. As seen on the value function graph, for low values of the
probability~$\belief$ of loss, the value
function is linear as the insuree chooses not to purchase insurance. At $\belief^*$
(which is approximately 0.334), the value function exhibits a kink, and the agent
is indifferent between no insurance and a positive indemnity level.
Finally, for larger values of $\belief$, the value function~$\Value$ 
is twice continuously differentiable with a positive second derivative,
and the optimal insurance level is a smooth and positive function of the
insuree's belief.
\begin{figure}
\begin{center}
\includegraphics[scale=0.4]{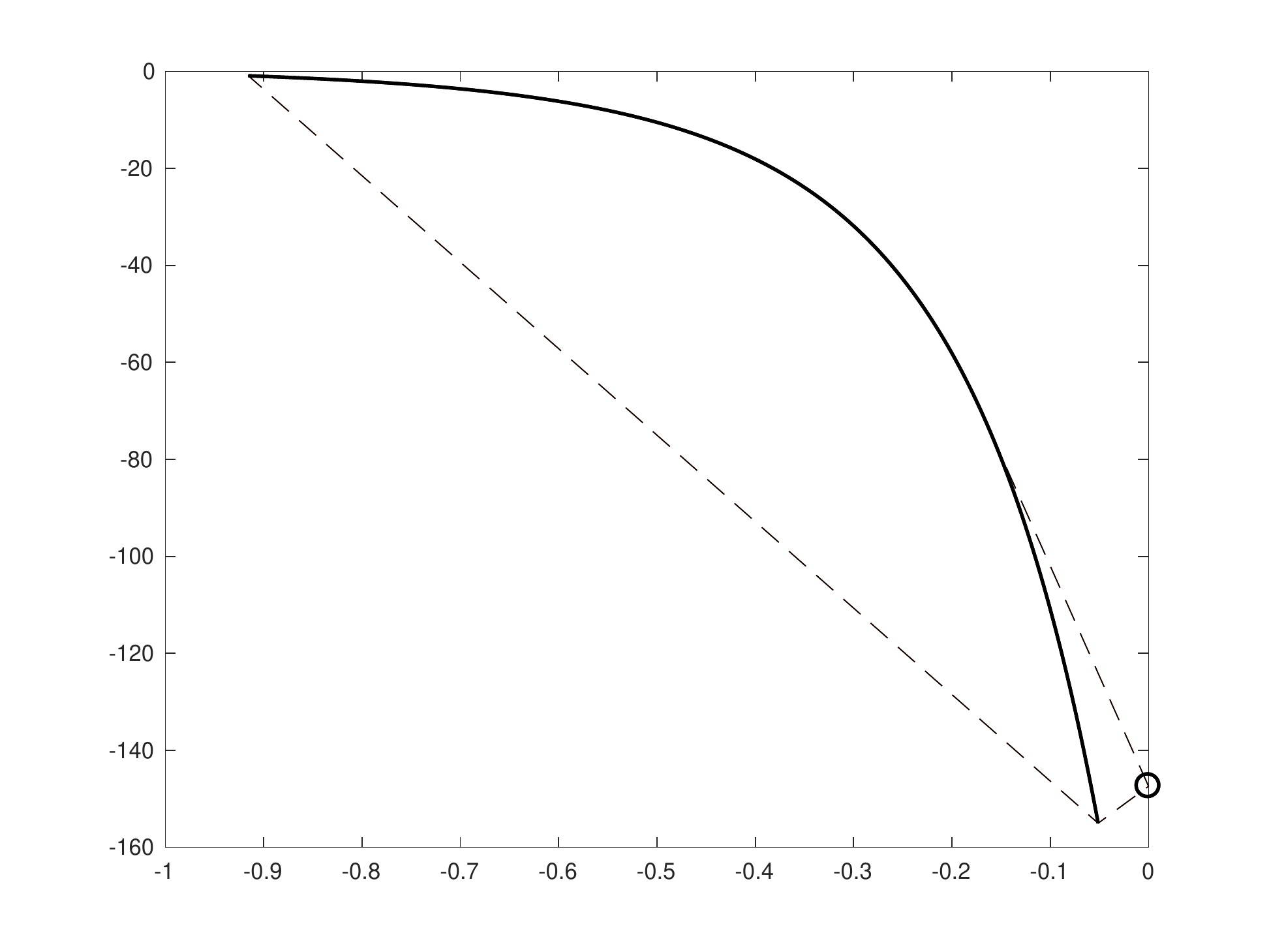}
\includegraphics[scale=0.4]{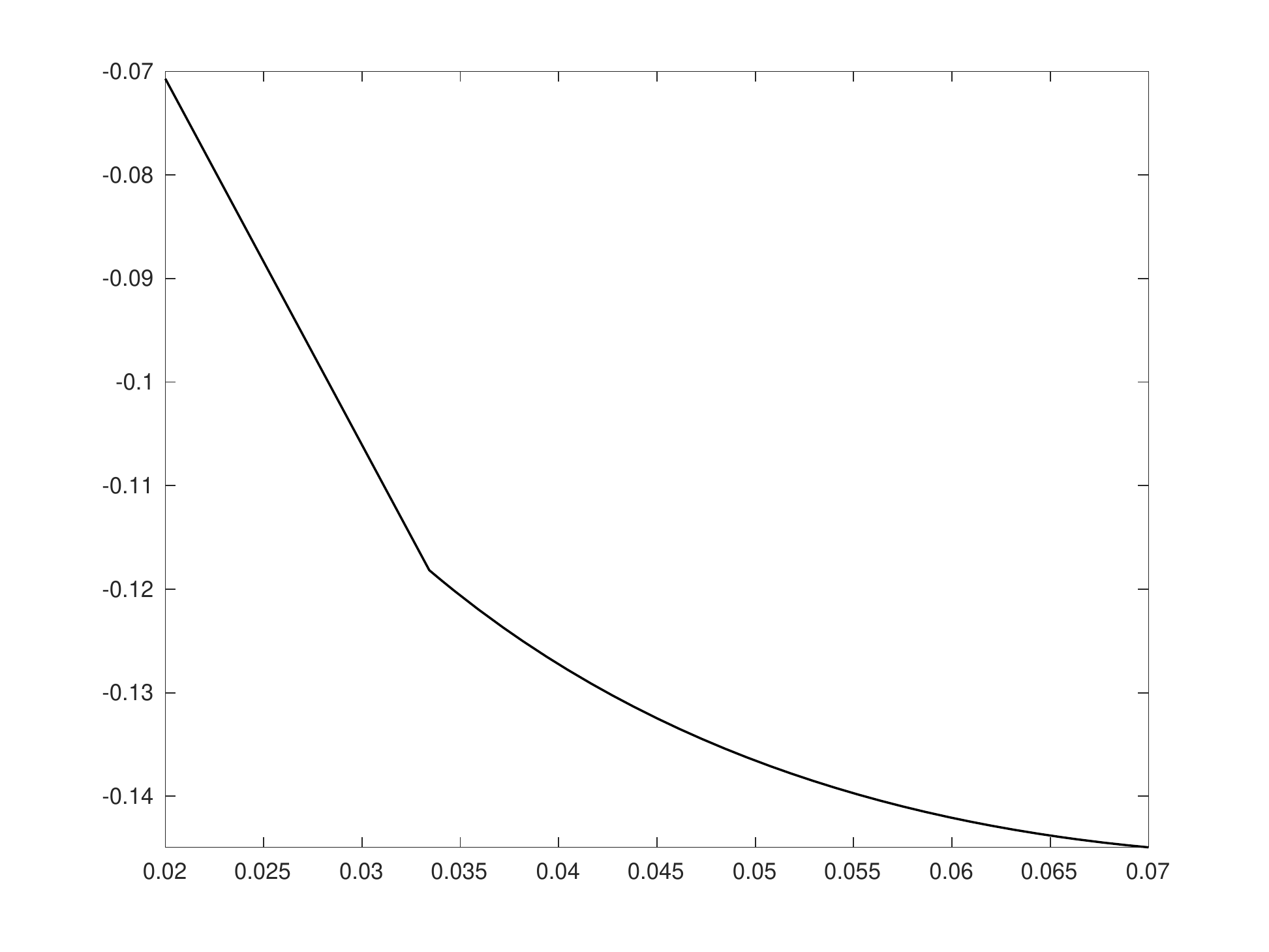}
\end{center}
\caption{The action set~$\ACTIONS$ on the left and the corresponding value
  function~$\Value=\Value_{\ACTIONS}$ in~\eqref{eq:value_function_insurance} for
  the insurance example on the right. Parameter values are $\alpha=0.08$,
$f=10$, $\varpi=1000$, $\riskaversion=10$.}\label{fig:insurance}
\end{figure}
	

\section{The marginal value of information}
\label{sec:marginal}

The question of the marginal value of information 
is studied in~\cite{RadnerStiglitzNonConcavity1984}.
They provide joint conditions on a parameterized family of information structures
together with a decision problem such that, when the agent is close to receiving
no information at all, the marginal value of information is null. Their result
was subsequently generalized in~\cite{Chade_ShleeJET2002-another} and
\cite{delara2007tight}, where are provided joint conditions on parameterized
information and a decision problem leading to zero marginal value of information.

In this Section, we show how our bounds on the value of information,
obtained in Sect.~\ref{sec:value_info}, apply to the
marginal value of information. 
In Subsect.~\ref{Model_and_first_result}, we provide separate conditions on
the decision problem and on the family of parameterized information structures
that result in a null value of information.  
We then examine, in Subsect.~\ref{Examples}, several
parameterized families of information structures and rely on our main results to
study how the marginal value of information varies depending on the decision
problem faced.

\subsection{Model and first result}
\label{Model_and_first_result}

Let \( \np{ \va{\beliefbis}^\theta }_{\theta > 0 } \)
be a family of information structures as in~\eqref{eq:information_structure}.
As in \cite{RadnerStiglitzNonConcavity1984}, 
we are interested in the so-called \emph{marginal value of information}: 
\begin{equation}
V^+ = \limsup_{\theta\to 0} \frac1\theta 
\VoI_{\ACTIONS}\np{\va{\beliefbis}^\theta}
\eqfinp 
\label{eq:marginal_VoI}
\end{equation}

The following proposition is a straightforward consequence of
Theorems~\ref{th:Valuable_information} 
and \ref{th:Flexible_decisions}.

\begin{proposition}
	Assume that
        \begin{itemize}
		\item 
either $\EE \bc{ d\bp{\va{\beliefbis}^\theta,\ConfidenceSet(\priorbelief)} }
= o(\theta)$,
		\item 
or the decision maker is \emph{flexible} at prior belief~$\priorbelief$ and 
$\EE \Vert  \va{\beliefbis}^\theta-\priorbelief\Vert^2 = o({\theta})$.
        \end{itemize}
	Then the marginal value of information $V^+ = 0$. 
\label{pr:marginal_value_of_information} 
\end{proposition}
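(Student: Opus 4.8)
The plan is to derive Proposition~\ref{pr:marginal_value_of_information} directly from the two relevant upper bounds established earlier, since each of the two alternative hypotheses is tailored to one of those bounds. In either case $V^+$ is defined as a $\limsup$ of $\frac1\theta\VoI_{\ACTIONS}(\va{\beliefbis}^\theta)$, and since $\VoI_{\ACTIONS}\ge 0$ always (the value function is convex, so Jensen gives $\EE[\Value_{\ACTIONS}(\va{\beliefbis}^\theta)]\ge\Value_{\ACTIONS}(\priorbelief)$), it suffices to produce an upper bound on $\frac1\theta\VoI_{\ACTIONS}(\va{\beliefbis}^\theta)$ that tends to $0$.

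First, under the first hypothesis, I would invoke the upper bound of Theorem~\ref{th:Valuable_information}: for any fixed $\varepsilon>0$ there is a constant $\ConstantSupspeciale$ (independent of the information structure) with $\VoI_{\ACTIONS}(\va{\beliefbis}^\theta)\le \ConstantSupspeciale\,\EE[d(\va{\beliefbis}^\theta,\ConfidenceSet(\priorbelief))]$. Dividing by $\theta$ and using $\EE[d(\va{\beliefbis}^\theta,\ConfidenceSet(\priorbelief))]=o(\theta)$ gives $\frac1\theta\VoI_{\ACTIONS}(\va{\beliefbis}^\theta)\le \ConstantSupspeciale\cdot o(\theta)/\theta\to 0$, hence $V^+\le 0$; combined with $V^+\ge 0$ this yields $V^+=0$. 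Note $\ConstantSupspeciale$ here does not depend on $\varepsilon$ (it is the constant $C_{\ACTIONS}$ of the upper bound), so no subtlety arises from the $\varepsilon$-neighborhood; one simply uses the left-hand inequality of~\eqref{eq:Valuable_information}.

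Second, under the alternative hypothesis, the agent is flexible at $\priorbelief$, so Theorem~\ref{th:Flexible_decisions} applies and furnishes a constant $\ConstantSup$ with $\VoI_{\ACTIONS}(\va{\beliefbis}^\theta)\le \ConstantSup\,\EE\|\va{\beliefbis}^\theta-\priorbelief\|^2$. Again dividing by $\theta$ and using $\EE\|\va{\beliefbis}^\theta-\priorbelief\|^2=o(\theta)$ gives $\frac1\theta\VoI_{\ACTIONS}(\va{\beliefbis}^\theta)\to 0$, so $V^+=0$. In both branches the argument is a one-line application of the corresponding theorem plus the nonnegativity of the value of information; there is essentially no obstacle, the only thing to be careful about is that the constants in Theorems~\ref{th:Valuable_information} and~\ref{th:Flexible_decisions} are uniform over all information structures (which they are, by the statements), so that the same constant works for every $\theta$ as $\theta\to 0$.
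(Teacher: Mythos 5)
Your proof is correct and takes exactly the route the paper intends: the paper itself states that the proposition is a straightforward consequence of the upper bounds in Theorems~\ref{th:Valuable_information} and~\ref{th:Flexible_decisions}, and your two branches apply precisely those bounds (correctly noting that the upper-bound constant $\ConstantSupspeciale$ does not depend on $\varepsilon$), combined with the nonnegativity of $\VoI_{\ACTIONS}$ from Jensen's inequality.
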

The first condition is met automatically if 
$\EE \Vert  \va{\beliefbis}^\theta-\priorbelief\Vert = o({\theta})$. It is also
met if, for instance, $\ConfidenceSet(\priorbelief)$ has a nonempty interior, and
posteriors converge to the prior almost surely. 
\medskip

We now discuss how our approach in 
Proposition~\ref{pr:marginal_value_of_information}
compares with the literature. In \cite{RadnerStiglitzNonConcavity1984}, one finds
joint conditions on the parameterized information structure 
\( \np{ \va{\beliefbis}^\theta }_{\theta > 0 } \)
and the decision problem at hand, 
leading to $V^+=0$.
The second case in Proposition~\ref{pr:marginal_value_of_information}, 
when the decision maker is flexible, 
compares with the original Radner-Stiglitz assumptions 
for the smoothness part, 
but not for the uniqueness of optimal actions.
Indeed, Assumption (A0) in \cite{RadnerStiglitzNonConcavity1984} 
does not require that \( \OptimalActions(\va{\beliefbis}^\theta) \) 
be a singleton, for all $\theta$. 

The authors of~\cite{Chade_ShleeJET2002-another} 
make a step towards disentangling 
conditions on the parameterized information structure 
\( \np{ \va{\beliefbis}^\theta }_{\theta > 0 } \)
from conditions on the decision problem
that lead to a null marginal value of information.  
However, like \cite{RadnerStiglitzNonConcavity1984}, 
they make an assumption on how the optimal action varies with information, 
which makes the comparison with 
Proposition~\ref{pr:marginal_value_of_information} delicate.
In addition, \cite{Chade_ShleeJET2002-another} provide sufficient
conditions for $V^+=0$ that bear on the conditional distribution of the signal 
knowing the state of nature. 
Our approach focuses on the posterior 
conditional distribution of the state of nature knowing the signal.

The authors of~\cite{delara2007tight} 
provide separate conditions on the parameterized information structure 
\( \np{ \va{\beliefbis}^\theta }_{\theta > 0 } \)
and the decision problem (represented by the action set~$\ACTIONS$)
that lead to $V^+=0$.
Their condition ``IIDV=0'' is that 
\( \limsup_{\theta\to 0} \frac1\theta 
\EE \Vert  \va{\beliefbis}^\theta-\priorbelief\Vert = 0 \), or, equivalently,
$\EE \Vert  \va{\beliefbis}^\theta-\priorbelief\Vert = o({\theta})$, 
which implies the first item of 
Proposition~\ref{pr:marginal_value_of_information}.
Thus, this latter proposition implies the main result of \cite{delara2007tight}.

\subsection{Examples}	
\label{Examples}

Here, we study the marginal value of information for several typical
parameterized information structures. In the first example, information consists
on the observation of a Brownian motion with known variance and a drift that
depends on the state of nature. In the second example, information consists of
the observation of a Poisson process whose probability of success depends on the
state of nature. In these two well studied families in the learning literature,
the natural parameterization of information is the length of the interval of time
during which observation takes place. In the third example, 
the agent observes a binary signal and the marginal value of information depends
on the asymptotic informativeness of these signals close to the situation without information. 

In all three following examples we assume binary states of nature, $\NATURE =
\{0,1\}$, and (by a slight abuse of notation) 
the prior belief on the state being~$1$ is 
denoted~$\priorbelief \in ]0,1[$. We
follow the conditions in Sect.~\ref{sec:value_info} 
under which we established bounds on the value of
information, and label as: 
``confident'' the case in which $\priorbelief$ lies in the interior of
the confidence set~$\Delta_A^c(\priorbelief)$ 
(in this case, $\Delta_A^c(\priorbelief)$ is a closed nonempty interval 
$\bc{\belief_l, \belief_h}$ by Proposition~\ref{pr:ConfidenceSet_ter}, 
and the value function is linear on this range);
``undecided'' the case in which the decision problem
faced by the decision maker is such that there is indifference between two
actions at prior belief~$\priorbelief$; 
``flexible'' the case in which the optimal action is a smooth function of the
belief in a neighborhood of prior belief~$\priorbelief$.

Our aim is to develop estimates of the marginal value of information~$V^+$
in~\eqref{eq:marginal_VoI}. 
There are three possibilities: it can be infinite, null, 
or positive and finite. We denote these three cases by $V^+ =\infty$, 
$V^+ =0$ and $V^+ \simeq1$ respectively.


\begin{example}[Brownian motion] 
Frameworks in which agents observe a Brownian motion with known volatility and unknown drift include 
\cite{bergemann1997market,KellerRadyExperimentation99,BolHar99},
as well as reputation models like 
\cite{FaingoldSannikovReputationContinuousTime2011}. 

Assume the agent observes the realization of a Brownian motion 
with variance 1 and drift~$\nature\in\{0,1\}$, namely 
\( d\va{Z}_t = \nature dt + d\va{B}_t \), 
for a small interval of time~$\theta >0$. 
If we let~$\va{\beliefbis}^t$ be the posterior belief at time~$t$, 
it is well-known\footnote{See for instance Lemma 1 in \cite{BolHar99} 
or Lemma 2 in \cite{FaingoldSannikovReputationContinuousTime2011}.} 
that $\va{\beliefbis}^t$ follows a diffusion process of the form
\( d \va{\beliefbis}^t = \va{\beliefbis}^t(1-\va{\beliefbis}^t) d \va{w}_t \),
where $\va{w}$ is a standard Browian process.
Thus, for small values of $\theta$, we have the estimates
\begin{equation*}
\EE \Vert \va{\beliefbis}^\theta-\belief \Vert \sim \sqrt{\theta} \eqsepv 
\EE \Vert \va{\beliefbis}^\theta-\belief \Vert^2 \sim \theta \eqfinp
\end{equation*}
It follows from
Theorems~\ref{th:Valuable_information}-\ref{th:Flexible_decisions} 
that the marginal value of information is characterized, 
depending on the decision problem, as:
\begin{enumerate}
\item In the confident case, $V^+ = 0$,
\item In the undecided case, $V^+ = \infty$,
\item In the flexible case, $V^+\simeq 1$.
\end{enumerate}
\end{example}


\begin{example}[Poisson learning]
An important class of models of strategic experimentation 
(see \cite{kellerradycripps2005strategic}) are those in which 
the agent's observations are driven by a Poisson process of unknown intensity. 
Assume the agent observes, during a small interval of time~$\theta>0$, 
 a Poisson process with intensity~$\rho_\nature$, $\nature\in\{0,1\}$,
where~$\rho_1 > \rho_0 > 0 $. 
The probability of two successes is negligible compared to the probability of
one success (of order~$\theta^2$ compared to~$\theta$). A success leads to a
posterior that converges from below, as~$\theta\to0$, to
\begin{equation*}
\beliefbis^+ = 
\frac{\priorbelief\rho_1}{\priorbelief\rho_1+(1-\priorbelief)\rho_0} > 
\priorbelief \eqfinv
\end{equation*}
and happens with probability of order $\sim\theta$. 
In the absence of success, the posterior belief converges to
the prior belief~$\priorbelief$ 
as~$\theta\to 0$. 
As we have seen that the confidence set~$\Delta_A^c(\priorbelief)$ 
is a closed interval \( \bc{\belief_l, \belief_h} \), 
we note that $\EE\bc{ d\bp{\va{\beliefbis}^\theta, \Delta_A^c(\priorbelief)}}
\sim\theta$ if $q^+ > p_h$, 
and $\EE\bc{ d\bp{\va{\beliefbis}^\theta, \Delta_A^c(\priorbelief)} }
=o(\theta)$ otherwise. 
This implies:
\begin{enumerate}
\item In the confident case, 
\begin{enumerate}
\item~$V^+\simeq1$ if~$\beliefbis^+>\belief_h$, 
\item~$V^+\simeq0$ if~$\beliefbis^+\leq \belief_h$.
\end{enumerate}
\end{enumerate}
\smallskip

We also have the estimates
\begin{equation*}
\EE \Vert \va{\beliefbis}^\theta-\belief \Vert \sim \theta \eqsepv
\EE \Vert \va{\beliefbis}^\theta-\belief \Vert^2 \sim \theta 
\eqfinv 
\end{equation*}
which imply the following estimates on the marginal value of information:
\begin{enumerate}\setcounter{enumi}{1}
\item In the undecided case, $V^+\simeq1$,
\item In the flexible case, ~$V^+\simeq1$.
\end{enumerate}
\end{example}


\begin{example}[Equally likely signals] 
Here, we consider binary and equally likely signals, which lead to a ``split''
of beliefs around the prior belief~$\priorbelief$. Depending on the precision of
these signals as a function of~$\theta$,  
the posterior beliefs are~$p \pm \theta^\alpha$ 
for a certain parameter~$\alpha>0$
(lower values of~$\alpha$ correspond to more spread out beliefs around the
prior, hence to more accurate information).  
In this case we easily compute
\begin{equation*}
\EE \Vert \va{\beliefbis}^\theta-\belief \Vert = \theta^\alpha \eqsepv
\EE \Vert \va{\beliefbis}^\theta-\belief \Vert^2 = \theta^{2\alpha} 
\eqfinv 
\end{equation*} and we observe that 
$\EE\bc{d(\va{\beliefbis}^\theta, \Delta_A^c(\priorbelief)}=0$ for $\theta$ small enough. 
Here again, the marginal value of information is deduced from 
Theorems~\ref{th:Valuable_information}--\ref{th:Flexible_decisions}:
\begin{enumerate}
\item In the confident case, $V^+ = 0$,
\item In the undecided case,
\begin{enumerate}
\item~$V^+ = \infty$ if~$\alpha<1$, 
\item~$V^+\simeq1$  if~$\alpha=1$,
\item~$V^+ = 0$ if~$\alpha>1$, 
\end{enumerate}
\item In the flexible case, 
\begin{enumerate}
\item~$V^+ = \infty$ if~$\alpha<1/2$, 
\item~$V^+\simeq1$  if~$\alpha=1/2$,
\item~$V^+ = 0$ if~$\alpha>1/2$.
\end{enumerate}
\end{enumerate}
\end{example}


Table~\ref{tb:summarises_the_marginal_value_of_information} 
summarizes the marginal value of information in all of our examples.

\begin{table}[h]
\begin{center}\begin{tabular}{||l||c|c|c||}
\hline
Marginal value of information~$V^+$ &confident&undecided&flexible
\\
\hline\hline
Brownian&0&$\infty$&1\\
Poisson learning & 0 or 1&1&1\\
Equally likely signals, $\alpha<1/2$&0&$\infty$&$\infty$\\
Equally likely signals, $\alpha=1/2$&0&$\infty$&$1$\\
Equally likely signals, $1/2<\alpha<1$&0&$\infty$&$0$\\
Equally likely signals, $\alpha=1$&0&$1$&$0$\\
Equally likely signals, $\alpha>1$&0&$0$&$0$\\
\hline
\end{tabular}\end{center}
\caption{Marginal value of information in the different examples. 
The value~1 represents a positive and finite marginal value of information.
\label{tb:summarises_the_marginal_value_of_information}}
\end{table}

In all cases except one, the marginal value of information is completely
determined by the local behavior of the value function around the prior. For the
Poisson case, the marginal value of information is 0 or positive, depending on
whether the observation of a success is sufficient to lead to a decision
reversal.  

The marginal value of information is always weakly lower in the flexible case
than in the undecided case, and weakly higher in the undecided case than in
other cases. In the confident case, the marginal value of information is null,
except in the Poisson case with~$\beliefbis^+> \belief_h $. This is driven by
the fact that, in all other cases, posteriors are, with high probability, too
close to the prior to lead to a decision reversal. In the undecided situation,
the marginal value of information is always positive or infinite, except for
sufficiently uninformative binary signals ($\alpha>1$). Finally, in the flexible
case --- the most representative of decision problems with a continuum of
actions --- the value of information is positive or infinite, except with quite
uninformative binary signals ($\alpha > 1/2$).

\section{Related literature}
\label{sec:related_literature}

The value of information in decision problems is a well-studied question in
economics and in statistics. The central work in this area is
\cite{BlackwellEquivalentComparisonOfExperiments53}, which defines a source of
information~$\alpha$ as \emph{more informative} than another, $\beta$, whenever
all agents, independently of their preferences and decision problems faced,
weakly prefer~$\alpha$ to~$\beta$. Blackwell \cite{BlackwellEquivalentComparisonOfExperiments53} characterizes precisely this relationship in the following terms:~$\alpha$ is more informative than~$\beta$ if and only if information from~$\beta$ can be obtained as a garbling of the information from~$\alpha$. 

The requirement that all agents agree on their preferences between two
statistical experiments is a strong one. It implies that this ranking is
incomplete, as many such pairs of experiments cannot be ranked according to this
ordering. Some authors have considered subclasses of decision problems in order
to obtain rankings that are more complete than Blackwell's. For instance,
\cite{Lehmann}, \cite{PersicoAuctions2000} and \cite{atheylevin2017value}
restrict attention to families of decision problems that 
generate monotone decision rules. Focusing on investment decision problems,
\cite{CabralesGossnerSerranoEntropyAER2013} obtains and characterizes a complete
ranking of information sources based on a uniform criterion;
\cite{CabralesGossnerSerrano2017} uses a duality approach to characterize the
value of an information purchase that consists of an information structure with
a price attached to~it.  

The present work departs from this literature in the sense that we focus on the
value of information for a given agent, instead of trying to measure the value
of information independently of the agent. Papers
\cite{GilboaLehrerValInfoJME91} and \cite{AzrieliLehrer} characterize the
possible preferences for information that any agent can have, letting the
decision problem vary and the agent's preferences vary. 

The question of marginal value of information is studied in
\cite{RadnerStiglitzNonConcavity1984,Chade_ShleeJET2002-another,delara2007tight}. They
consider parameterized information structures, and derive general conditions on
the couple consisting of the information structures and the decision problem under
which the marginal value of information close to no information is zero. Our
work contributes to this question by allowing us to derive estimates on the
value of information based on separate conditions on the decision problem and on
the information structure. This is the approach we have taken in
Sect.~\ref{sec:marginal}. 
Our contribution considerably opens the spectrum of possibilities
for the marginal value of information, by giving conditions under which 
it can be infinite, null, or positive and finite.  
\bigskip

\textbf{Acknowledgements.}
Olivier Gossner acknowledges support from the French National Research Agency
(ANR), ``Investissements d'Avenir'' (ANR-11-IDEX-0003/LabEx
Ecodec/ANR-11-LABX-0047), 
and thanks Rafael Veiel for his excellent research assistantship.
The Authors thank the Editor and two Referees for their insightful comments,
that have helped improve the manuscript.

\bibliographystyle{siamplain}

\appendix 

\section{Appendix}

\subsection{Revisiting the model of Sect.~\ref{sec:model}
with convex analysis tools}

We revisit the model in Sect.~\ref{sec:model}
with convex analysis tools
to prepare the proofs in Sect.~\ref{Proofs_of_the_results}. 
We recall that $\ACTIONS \subset \RR^{\NATURE}$ in~\eqref{eq:ACTIONS} is a
nonempty, convex and compact subset of~$\RR^{\NATURE}$, 
called the \emph{action set}, and that we identify the set~$\SIGNED$ of signed
measures on~$\NATURE$ with~$\RR^{\NATURE}$.

\paragraph{Support function}

The \emph{support function}~$\sigma_\ACTIONS$ of the action set~$\ACTIONS$ 
is defined by
\begin{equation}
\sigma_\ACTIONS(\signed)=
\sup_{\action\in \ACTIONS} \proscal{\signed}{\action} 
\eqsepv \forall \signed \in \SIGNED \eqfinp 
\label{eq:support_function}
\end{equation}
The value function~$\Value_{\ACTIONS} : \BELIEFS \to \RR$ 
in~\eqref{eq:value_function} is the restriction of 
$\sigma_\ACTIONS$ 
to probability distributions~$\BELIEFS=\Delta(\NATURE)\subset\SIGNED$:
\begin{equation}
  \Value_{\ACTIONS}(\belief) = \sigma_\ACTIONS(\belief) \eqsepv 
\forall \belief \in \BELIEFS \eqfinp
\label{eq:value_function_support_function}
\end{equation}
It is well-known that~$\sigma_\ACTIONS$ is convex
(as the supremum of the family of 
linear maps~$\proscal{\cdot}{\action}$ for~$\action\in \ACTIONS$).
As the {action set}~$\ACTIONS$ is compact, $\sigma_\ACTIONS(\signed)$
takes finite values, hence its effective domain is~$\SIGNED$, hence~$\sigma_\ACTIONS$ is 
continuous.

\paragraph{(Exposed) face}

For any signed measure~$\signed \in \SIGNED$, 
we let 
\begin{equation}
\FACE_{\ACTIONS}(\signed) = 
\arg\max_{\action' \in \ACTIONS} \proscal{\signed}{\action'}
= \{ \action \in \ACTIONS \mid
\forall \action'\in \ACTIONS \eqsepv 
\proscal{\signed}{\action'} \leq \proscal{\signed}{\action} \}
\subset  \ACTIONS
\label{eq:face_signed}
\end{equation}
be the set of maximizers of~$\action \mapsto \proscal{\signed}{\action}$
over~$\ACTIONS$.
We call~$\FACE_{\ACTIONS}(\signed)$ the \emph{(exposed) face of~$\ACTIONS$ 
in the direction~$\signed \in \SIGNED$}.
As the {action set}~$\ACTIONS$ is convex and compact,
the face~$\FACE_{\ACTIONS}(\signed)$ of~$\ACTIONS$ 
in the direction~$\signed$ is nonempty, for any~$\signed \in \SIGNED$, 
and the face is a subset 
of the \emph{boundary}~$\partial\ACTIONS$ of~$\ACTIONS$:
\( 
  \FACE_{\ACTIONS}(\signed) \subset \partial\ACTIONS \eqsepv
\forall \signed \in \SIGNED  
\). 
We will use the following property:
for any nonempty convex set $\Convex \subset \Primal$ 
and $ \dual \in \Dual$ such that 
$\FACE_{\Convex}(\dual) \neq \emptyset$, we have 
\begin{equation}
  {\sigma_{\Convex}}(\dual') - {\sigma_{\Convex}}(\dual) 
\geq \sigma_{\FACE_{\Convex}(\dual)} (\dual'-\dual) 
\geq \proscal{\dual'-\dual}{\primal'} 
\eqsepv \forall \dual' \in \Dual \eqsepv \forall \primal' \in \Convex
\eqfinp 
\label{eq:property_subdifferential_of_the_support_function}
\end{equation}
The {set~$\OptimalActions(\belief)$ 
of optimal actions under belief~$\belief$} in~\eqref{eq:face_beliefs}
coincides with the {(exposed) face~$\FACE_{\ACTIONS}(\belief)$ of~$\ACTIONS$ 
in the direction~$\belief$} in~\eqref{eq:face_signed}: 
\begin{equation}
\OptimalActions(\belief) = \FACE_{\ACTIONS}(\belief)  \eqsepv
\forall \belief \in \BELIEFS \eqfinp 
\label{eq:face_beliefs_signed}
\end{equation}

\paragraph{Normal cone} 

For any payoff vector~$\action$ in~$\ACTIONS$, we define 
\begin{equation}
\NORMAL_{\ACTIONS}(\action) = \{  \signed \in \SIGNED \mid
\forall \action'\in \ACTIONS \eqsepv 
\proscal{\signed}{\action'} \leq \proscal{\signed}{\action} \} 
\subset \SIGNED \eqfinp
\label{eq:normal_cone_signed}
\end{equation}
We call~$\NORMAL_{\ACTIONS}(\action)$ the \emph{normal cone} 
to the closed convex set~${\ACTIONS}$ at~$\action \in {\ACTIONS}$.
Notice that~$\NORMAL_{\ACTIONS}(\action)$ is made of signed measures in~$\SIGNED$,
that are not necessarily beliefs. 
The set~$\RevealedBeliefs(\action)$ of beliefs 
compatible with optimal action~$\action$ in~\eqref{eq:normal_cone_beliefs}
is related to the {normal cone}~$\NORMAL_{\ACTIONS}(\action)$ at~$\action$ 
 in~\eqref{eq:normal_cone_signed} by: 
\begin{equation}
\RevealedBeliefs(\action) = \NORMAL_{\ACTIONS}(\action)
\cap \BELIEFS \eqsepv \forall \action \in \ACTIONS \eqfinp 
\label{eq:normal_cone_beliefs_signed} 
\end{equation}

\paragraph{Conjugate subsets of actions and beliefs}

Exposed face~$\FACE_{\ACTIONS}$ and normal cone~$\NORMAL_{\ACTIONS}$
are conjugate as follows:
\begin{equation}
 \signed \in \SIGNED \mtext{ and } \action \in \FACE_{\ACTIONS}(\signed) 
\iff 
\action \in \ACTIONS \mtext{ and }
\signed \in \NORMAL_{\ACTIONS}(\action) \eqfinp 
\label{eq:exposed_face_and_normal_cone_are_conjugate}
\end{equation}

\subsection{Background on geometric convex analysis}
\label{Recalls_on_geometric_convex_analysis}

A nonempty, convex and compact set~$\ACTIONS \subset \RR^{\NATURE}$ is called
a \emph{convex body} of~$\RR^{\NATURE}$ \cite[p.~8]{Schneider:2014}.

\paragraph{Regular points and smooth bodies}

We say that a point~$\action \in \ACTIONS$ 
is \emph{smooth} or \emph{regular} \cite[p.~83]{Schneider:2014} if 
the {normal cone}~$\NORMAL_{\ACTIONS}(\action)$ in~\eqref{eq:face_signed}
is reduced to a half-line.
The \emph{set of regular points} is denoted by \( \mathrm{reg}\np{\ACTIONS} \):
\begin{equation}
\action \in \mathrm{reg}\np{\ACTIONS} \iff \exists \signed \in \SIGNED \eqsepv
\signed \not = 0  \eqsepv \NORMAL_{\ACTIONS}(\action)=\RR_+ \signed \eqfinp 
\label{eq:set_of_regular_points}
\end{equation}
Notice that a regular point~$\action$ necessarily 
belongs to the boundary~$\partial\ACTIONS$ of~$\ACTIONS$:
\( \mathrm{reg}\np{\ACTIONS} \subset \partial\ACTIONS \).
The body~$\ACTIONS$ is said to be \emph{smooth} if all boundary points
of~$\ACTIONS$ are regular (\( \mathrm{reg}\np{\ACTIONS}=\partial\ACTIONS \)); 
in that case, it can be shown 
that its boundary~$\partial\ACTIONS$ 
is a~$C^1$ submanifold of~$\RR^{\NATURE}$ \cite[Theorem~2.2.4, p.~83]{Schneider:2014}.

\paragraph{Spherical image map of~$\ACTIONS$}

We denote by~$\sphere= \{ \signed \in \SIGNED \eqsepv \norm{\signed}=1 \} $ 
the unit sphere of the signed measures~$\SIGNED$
on~$\NATURE$
(identified with~$\RR^{\NATURE}$ with its canonical scalar product).
By~\eqref{eq:set_of_regular_points}, we have that 
\( 
\action \in \mathrm{reg}\np{\ACTIONS} \iff \exists ! \signed \in \sphere \eqsepv
\NORMAL_{\ACTIONS}(\action)=\RR_+ \signed 
\). 
If a point~$\action \in \ACTIONS$ is {regular}, 
the unique outer normal unitary vector to~$\ACTIONS$ at~$\action$ 
is denoted by~$\normal_{\ACTIONS}(\action)$, so that
\( \NORMAL_{\ACTIONS}(\action) = \RR_+ \normal_{\ACTIONS}(\action) \).
The mapping 
\begin{equation}
\normal_{\ACTIONS}: \mathrm{reg}\np{\ACTIONS} \to \sphere 
\eqsepv \mtext{where } \mathrm{reg}\np{\ACTIONS} \subset \partial\ACTIONS
\eqfinv  
\label{eq:spherical_image_map}
\end{equation}
is called the \emph{spherical image map of~$\ACTIONS$},
or the \emph{Gauss map}, and is continuous \cite[p.~88]{Schneider:2014}.
We have 
\begin{equation}
  \action \in \mathrm{reg}\np{\ACTIONS} \Rightarrow 
\NORMAL_{\ACTIONS}(\action) = \RR_+ \normal_{\ACTIONS}(\action) 
\mtext{ where } \normal_{\ACTIONS}(\action) \in \sphere \eqfinp
\label{eq:spherical_image_map_characterization}
\end{equation}

\paragraph{Reverse spherical image map of~$\ACTIONS$}

We say that a unit signed measure~$\signed \in \sphere$ is
\emph{regular} \cite[p.~87]{Schneider:2014} if 
the (exposed) face~\( \FACE_{\ACTIONS}(\signed) \)
of~$\ACTIONS$ in the direction~$\signed$,
as defined in~\eqref{eq:face_signed}, is reduced to a singleton.
The \emph{set of regular unit signed measures} 
is denoted by \( \mathrm{regn}\np{\ACTIONS} \):
\begin{equation}
\signed \in \mathrm{regn}\np{\ACTIONS} \iff \signed \in \sphere \mtext{ and }
\exists ! \action \in \ACTIONS
\eqsepv \FACE_{\ACTIONS}(\signed) = \{ \action  \} \eqfinp 
\label{eq:set_of_regular_signed_measures}
\end{equation}
For a regular unit signed measure~$\signed \in \sphere$, we denote by 
\( \face_{\ACTIONS}(\signed) \) the unique element of~\(
\FACE_{\ACTIONS}(\signed) \), so that
\( \FACE_{\ACTIONS}(\signed) = \{ \face_{\ACTIONS}(\signed) \} \).
The mapping 
\begin{equation}
\face_{\ACTIONS} : \mathrm{regn}\np{\ACTIONS} \to \partial\ACTIONS
\eqsepv  \mtext{where } \mathrm{regn}\np{\ACTIONS} \subset \sphere
\eqfinv  
\label{eq:reverse_spherical_image_map}
\end{equation}
is called the \emph{reverse spherical image map of~$\ACTIONS$}, and is
continuous \cite[p.~88]{Schneider:2014}.
We have 
\begin{equation}
 \signed \in \mathrm{regn}\np{\ACTIONS} \Rightarrow 
\FACE_{\ACTIONS}(\signed) = \{ \face_{\ACTIONS}(\signed) \} \eqfinp
\label{eq:reverse_spherical_image_map_characterization}
\end{equation}

\paragraph{Bodies with~$C^2$ surface}

\begin{proposition}[Schneider 2014, p.\,113]
If the body~$\ACTIONS$ has boundary~$\partial\ACTIONS$ 
which is a~$C^2$ submanifold of~$\RR^{\NATURE}$, then 
i) all points~$\action \in \partial\ACTIONS$ are regular
 (\( \mathrm{reg}\np{\ACTIONS}=\partial\ACTIONS \)), 
ii) 
the spherical image map~\( \normal_{\ACTIONS} \) 
in~\eqref{eq:spherical_image_map} is defined 
over the whole boundary~$\partial\ACTIONS$ and is of class~$C^1$,
iii) 
the spherical image map~\( \normal_{\ACTIONS} \) 
has the reverse spherical image map~$\face_{\ACTIONS}$ 
in~\eqref{eq:spherical_image_map} as right inverse,
that is, 
\( 
\normal_{\ACTIONS} \circ \face_{\ACTIONS} =
\mathrm{Id}_{\mathrm{regn}\np{\ACTIONS}} 
\). 
\label{pr:normal_circ_face}
\end{proposition}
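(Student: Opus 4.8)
The statement is classical (it is \cite[Sect.~2.5]{Schneider:2014}); here is how I would organize a proof. The three assertions form a local-to-global statement about the Gauss map, and the plan is to prove them in the stated order, with i) carrying essentially all of the work. Throughout I use that a $C^2$ submanifold is \emph{a fortiori} $C^1$ and that $\partial\ACTIONS$ has dimension $|\NATURE|-1$, so that it possesses at each of its points $\action$ a well-defined tangent hyperplane, whose direction I denote $T_\action$.

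For i), fix $\action \in \partial\ACTIONS$. The key claim is that every supporting hyperplane $H$ of $\ACTIONS$ at $\action$ satisfies $H - \action = T_\action$. Indeed, if $\gamma$ is a $C^1$ curve in $\partial\ACTIONS$ with $\gamma(0)=\action$, then $\gamma$ takes values in the closed half-space bounded by $H$ that contains $\ACTIONS$; writing this membership as an affine inequality and differentiating it at $t=0$ forces $\dot\gamma(0)\in H-\action$. Letting $\gamma$ run over curves realizing all directions in $T_\action$ gives $T_\action \subset H-\action$, hence equality since both are linear hyperplanes. So the supporting hyperplane at $\action$ is unique, and by \eqref{eq:set_of_regular_points} its outer unit normal spans $\NORMAL_\ACTIONS(\action)$, which is therefore a half-line; thus $\action \in \mathrm{reg}(\ACTIONS)$. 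As $\action$ was arbitrary, $\mathrm{reg}(\ACTIONS)=\partial\ACTIONS$.

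For ii), by i) the spherical image map $\normal_\ACTIONS$ of \eqref{eq:spherical_image_map} is defined on the whole boundary, and it remains to check it is $C^1$. Work in a chart: near $\action$, after an orthogonal change of coordinates sending the direction $T_\action$ to the hyperplane $\{x_{|\NATURE|}=0\}$, the set $\partial\ACTIONS$ is the graph $x_{|\NATURE|}=\phi(x_1,\dots,x_{|\NATURE|-1})$ of a $C^2$ function $\phi$. The outer unit normal at the graph point over $u$ is $\varepsilon\,(-\nabla\phi(u),1)/\sqrt{1+\norm{\nabla\phi(u)}^2}$, with a sign $\varepsilon\in\{-1,+1\}$ that is locally constant (fixed by the side of the graph on which $\ACTIONS$ lies). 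Since $\phi\in C^2$, the numerator is $C^1$ and the denominator is a nonvanishing $C^1$ function, so $\normal_\ACTIONS$ is $C^1$ in this chart, hence on all of $\partial\ACTIONS$.

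For iii), take $\signed \in \mathrm{regn}(\ACTIONS)$ and set $\action = \face_\ACTIONS(\signed)$, so that $\FACE_\ACTIONS(\signed)=\{\action\}$ and $\action\in\partial\ACTIONS$ (faces lie in the boundary). By the conjugacy \eqref{eq:exposed_face_and_normal_cone_are_conjugate}, $\signed \in \NORMAL_\ACTIONS(\action)$; by i), $\NORMAL_\ACTIONS(\action)=\RR_+\normal_\ACTIONS(\action)$ with $\normal_\ACTIONS(\action)\in\sphere$. Hence $\signed = \lambda\,\normal_\ACTIONS(\action)$ for some $\lambda\geq 0$, and since $\signed$ has unit norm, $\lambda=1$, i.e.\ $\signed = \normal_\ACTIONS(\face_\ACTIONS(\signed))$. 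This is the asserted identity $\normal_\ACTIONS\circ\face_\ACTIONS = \mathrm{Id}_{\mathrm{regn}(\ACTIONS)}$. The genuinely substantive point is Step i) --- reconciling the convex-analytic normal cone with the differential-geometric tangent hyperplane at a $C^1$ boundary point --- after which Steps ii) and iii) are bookkeeping.
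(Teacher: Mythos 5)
Your argument for item iii) is exactly the paper's: take $\signed\in\mathrm{regn}\np{\ACTIONS}$, use the conjugacy \eqref{eq:exposed_face_and_normal_cone_are_conjugate} between exposed faces and normal cones to get $\signed\in\NORMAL_{\ACTIONS}\bp{\face_{\ACTIONS}(\signed)}=\RR_+\normal_{\ACTIONS}\bp{\face_{\ACTIONS}(\signed)}$, and then use $\norm{\signed}=1$ to pin down the scalar. The only difference is in scope: the paper proves \emph{only} item iii) and simply cites Schneider (p.~113) for items i) and ii), whereas you supply self-contained proofs of those as well --- the identification of the supporting hyperplane with the differential-geometric tangent hyperplane via differentiating $t\mapsto\proscal{\signed}{\gamma(t)}$ at its maximum, and the local-graph computation showing the Gauss map of a $C^2$ graph is $C^1$. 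Both of these added arguments are correct (for i) one should note in passing that the supporting hyperplane theorem guarantees the normal cone is nontrivial, and that the $C^2$ hypersurface hypothesis rules out $\ACTIONS$ being lower-dimensional, so that only the outer of the two unit normals lies in $\NORMAL_{\ACTIONS}(\action)$); they make the statement self-contained at the cost of redoing material the paper deliberately outsources to \cite[p.~113]{Schneider:2014}. On the part where a comparison is possible, your route and the paper's coincide.
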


\begin{proof}
The first two items can be found in \cite[p.~113]{Schneider:2014}.
Now, we prove that \( \normal_{\ACTIONS} \circ \face_{\ACTIONS} =
  \mathrm{Id}_{\mathrm{regn}\np{\ACTIONS}} \).
As \( \face_{\ACTIONS} : \mathrm{regn}\np{\ACTIONS} \to \partial\ACTIONS \) 
by~\eqref{eq:reverse_spherical_image_map}, and as
\( \normal_{\ACTIONS}: \partial\ACTIONS \to \sphere \) 
by~\eqref{eq:spherical_image_map} since \(
\mathrm{reg}\np{\ACTIONS}=\partial\ACTIONS \), 
the mapping \( \normal_{\ACTIONS} \circ \face_{\ACTIONS} : 
\mathrm{regn}\np{\ACTIONS} \to \sphere \) is well defined. 
Let \( \signed \in \mathrm{regn}\np{\ACTIONS} \). 
By~\eqref{eq:reverse_spherical_image_map_characterization}, we have that 
\( \FACE_{\ACTIONS}(\signed) = \{ \face_{\ACTIONS}(\signed) \} \)
and by~\eqref{eq:spherical_image_map_characterization}, we have that 
\( \NORMAL_{\ACTIONS}\bp{\face_{\ACTIONS}(\signed)} = 
\RR_+ \normal_{\ACTIONS}\bp{\face_{\ACTIONS}(\signed)} \).
From~\eqref{eq:exposed_face_and_normal_cone_are_conjugate} ---
stating that exposed face and normal cone are conjugate --- we deduce that 
\( \signed \in \RR_+ \normal_{\ACTIONS}(\face_{\ACTIONS}(\signed)) \).
As  \( \signed \in \sphere \), we conclude that 
 \( \signed = \normal_{\ACTIONS}\bp{\face_{\ACTIONS}(\signed)} \) 
by~\eqref{eq:spherical_image_map}.
\end{proof}

\paragraph{Weingarten map}

Let \( \action \in \mathrm{reg}\np{\ACTIONS} \) be a regular point,
as in~\eqref{eq:set_of_regular_points},
such that the spherical image map~\( \normal_{\ACTIONS} \) 
in~\eqref{eq:spherical_image_map} is differentiable at~$\action$,
with differential denoted by~$T_{\action}\normal_{\ACTIONS}$.
The \emph{Weingarten map} \cite[p.~113]{Schneider:2014}
\( 
T_{\action}\normal_{\ACTIONS} : T_{\action}\partial\ACTIONS \to
T_{\normal_{\ACTIONS}(\action)} \sphere
\) 
linearly maps the tangent space~$T_{\action}\partial\ACTIONS$ 
of the boundary~$\partial\ACTIONS$
at point~$\action$ into the tangent space~$T_{\normal_{\ACTIONS}(\action)} \sphere$ 
of the sphere~$\sphere$ at~$\normal_{\ACTIONS}(\action)$.
The eigenvalues of the Weingarten map at~$\action$
are called the \emph{principal curvatures} of~$\ACTIONS$ at~$\action$ 
\cite[p.~114]{Schneider:2014};
they are nonnegative \cite[p.~115]{Schneider:2014}.
By definition, the body~$\ACTIONS$ has \emph{positive curvature}
 at~$\action$ if all principal curvatures  at~$\action$
are positive or, equivalently, if the {Weingarten map}
is of maximal rank  at~$\action$ \cite[p.~115]{Schneider:2014}.

\paragraph{Reverse Weingarten map}

Let \( \signed \in \mathrm{regn}\np{\ACTIONS} \) be a 
regular unit signed measure such that the 
reverse spherical image map~$\face_{\ACTIONS}$ 
in~\eqref{eq:reverse_spherical_image_map} is differentiable at~$\signed$,
with differential denoted by~$T_{\signed}\face_{\ACTIONS}$. 
The \emph{reverse Weingarten map}
\begin{equation}
T_{\signed}\face_{\ACTIONS} : T_{\signed} \sphere \to 
T_{\face_{\ACTIONS}\np{\signed}}\partial\ACTIONS 
\label{eq:reverse_Weingarten_map}
\end{equation}
 maps the tangent space~$T_{\signed} \sphere$ 
of the sphere~$\sphere$ at~$\signed$ 
into the tangent space~$T_{\face_{\ACTIONS}\np{\signed}}\partial\ACTIONS$
of the boundary~$\partial\ACTIONS$
at point~$\face_{\ACTIONS}\np{\signed}$.
The eigenvalues of the reverse Weingarten map at~$\signed$ 
are called the \emph{principal radii of curvature} 
of~$\ACTIONS$ at~$\signed$.

\subsection{Proofs of the results in Sect.~\ref{sec:value_info}}
\label{Proofs_of_the_results}

Using the relations~\eqref{eq:face_beliefs_signed}
and \eqref{eq:normal_cone_beliefs_signed}, we express the proofs 
of the results in Sect.~\ref{sec:value_info}
in terms of the sets \( \FACE_{\ACTIONS}(\belief) \) in~\eqref{eq:ACTIONS}
and \( \NORMAL_{\ACTIONS}(\action) \) in~\eqref{eq:normal_cone_signed}
(in the set~$\SIGNED$ of signed measures),
instead of \( \OptimalActions(\belief) \)  in~\eqref{eq:face_beliefs}
and \( \RevealedBeliefs(\action) \) in~\eqref{eq:normal_cone_beliefs} 
(in the set~$\BELIEFS$ of probability measures).
\smallskip

\paragraph{Value of information} 

We have seen in~\eqref{eq:value_function_support_function}
that the value function~$\Value_{\ACTIONS} : \BELIEFS \to \RR$ 
in~\eqref{eq:value_function} is the restriction of the support 
function~$\sigma_\ACTIONS$ to beliefs in~$\BELIEFS$. 
By definition~\eqref{eq:VoI} of the value of information, we deduce that,
for any information structure~$\va{\beliefbis}$  as
in~\eqref{eq:information_structure}, 
we have:
\begin{equation}
  \VoI_{\ACTIONS}\np{\va{\beliefbis}} = 
\EE \left[ \sigma_\ACTIONS(\va{\beliefbis})-\sigma_\ACTIONS(\priorbelief) \right]
\eqfinp 
\label{eq:VoI_bis}
\end{equation}

\begin{lemma}
Let us introduce, for all \( \beliefbis \in \BELIEFS \), 
  \begin{subequations}
    \begin{align}
 \varphi_{\ACTIONS}^+(\beliefbis) 
& =
\sigma_\ACTIONS(\beliefbis)-\sigma_\ACTIONS(\priorbelief) 
+ \sigma_{-\OptimalActions(\priorbelief) }\np{\beliefbis-\priorbelief} \eqfinv \\
\varphi_{\ACTIONS}^-(\beliefbis) 
& =
\sigma_\ACTIONS(\beliefbis)-\sigma_\ACTIONS(\priorbelief) 
- \sigma_{\OptimalActions(\priorbelief) }\np{\beliefbis-\priorbelief} 
\eqfinp 
    \end{align}
\label{eq:varphi_ACTIONS}
  \end{subequations}
Then, for any information structure~$\va{\beliefbis}$ and 
for any \( \action \in \ACTIONS \), we have that
  \begin{subequations}
    \begin{align}
\EE \Bc{ \varphi_{\ACTIONS}^+(\va{\beliefbis}) }
& = \EE \Bc{ \sigma_\ACTIONS(\va{\beliefbis})-\sigma_\ACTIONS(\priorbelief) 
+ \sigma_{-\OptimalActions(\priorbelief) }\np{\va{\beliefbis}-\priorbelief} } \\
& \geq 
\VoI_{\ACTIONS}\np{\va{\beliefbis}} = 
\EE \left[ \sigma_\ACTIONS(\va{\beliefbis})-\sigma_\ACTIONS(\priorbelief) -
\proscal{\va{\beliefbis}-\priorbelief}{\action} \right] 
 \label{eq:VoI_ter} \\
& \geq 
\EE \Bc{ \sigma_\ACTIONS(\va{\beliefbis})-\sigma_\ACTIONS(\priorbelief) 
- \sigma_{\OptimalActions(\priorbelief) }\np{\va{\beliefbis}-\priorbelief} }
= \EE \Bc{ \varphi_{\ACTIONS}^-(\va{\beliefbis}) }
\eqfinp
    \end{align}
\label{eq:VoI_bounds}
  \end{subequations}
\end{lemma}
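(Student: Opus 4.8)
The plan is to establish the two inequalities in \eqref{eq:VoI_bounds} by applying, at the level of each realization $\beliefbis \in \BELIEFS$, the subdifferential-type inequality \eqref{eq:property_subdifferential_of_the_support_function} for support functions, specialized to $\Convex = \ACTIONS$ and $\dual = \priorbelief$, and then taking expectations (using that $\EE[\va{\beliefbis}] = \priorbelief$ only enters implicitly through the definition of $\VoI$, not the pointwise bounds). The equalities on the first and third lines are purely notational: they just unfold the definitions \eqref{eq:varphi_ACTIONS} of $\varphi_{\ACTIONS}^\pm$ and push the expectation through the (finite-valued, continuous) integrands; and the middle equality is \eqref{eq:VoI_bis} rewritten using $\proscal{\va{\beliefbis}-\priorbelief}{\action} = \proscal{\va{\beliefbis}}{\action} - \proscal{\priorbelief}{\action}$ together with the fact that $\EE[\proscal{\va{\beliefbis}-\priorbelief}{\action}] = \proscal{\EE[\va{\beliefbis}]-\priorbelief}{\action} = 0$ for any fixed $\action$, so that subtracting this null term is legitimate.

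For the \emph{lower} inequality \eqref{eq:VoI_ter} (the inner one), I would pick $\action \in \OptimalActions(\priorbelief) = \FACE_{\ACTIONS}(\priorbelief)$, which is nonempty since $\ACTIONS$ is compact convex. Then for every $\beliefbis$, inequality \eqref{eq:property_subdifferential_of_the_support_function} with $\dual' = \beliefbis$, $\dual = \priorbelief$, $\primal' = \action$ gives $\sigma_\ACTIONS(\beliefbis) - \sigma_\ACTIONS(\priorbelief) \geq \sigma_{\FACE_\ACTIONS(\priorbelief)}(\beliefbis - \priorbelief) = \sigma_{\OptimalActions(\priorbelief)}(\beliefbis - \priorbelief)$, which is exactly $\varphi_\ACTIONS^-(\beliefbis) \geq 0$ added to the payoff-difference form; more precisely, $\sigma_\ACTIONS(\beliefbis)-\sigma_\ACTIONS(\priorbelief) - \proscal{\beliefbis-\priorbelief}{\action} \geq \sigma_{\OptimalActions(\priorbelief)}(\beliefbis-\priorbelief) - \proscal{\beliefbis-\priorbelief}{\action} \geq 0$ whenever $\action \in \OptimalActions(\priorbelief)$, and taking expectations yields $\VoI_\ACTIONS(\va{\beliefbis}) \geq \EE[\varphi_\ACTIONS^-(\va{\beliefbis})]$ once we recognize the right-hand side via the definition of $\varphi_\ACTIONS^-$.

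For the \emph{upper} inequality, the idea is dual: since $\ACTIONS = -(-\ACTIONS)$ and $\sigma_{-\OptimalActions(\priorbelief)}(\dual) = \sigma_{\OptimalActions(\priorbelief)}(-\dual) = \max_{\action \in \OptimalActions(\priorbelief)} \proscal{-\dual}{\action}$, I would bound $\sigma_\ACTIONS(\beliefbis) - \sigma_\ACTIONS(\priorbelief)$ from above. The key observation is that for $\action^\star \in \OptimalActions(\priorbelief)$ we have $\sigma_\ACTIONS(\priorbelief) = \proscal{\priorbelief}{\action^\star}$, so $\sigma_\ACTIONS(\beliefbis) - \sigma_\ACTIONS(\priorbelief) + \proscal{\beliefbis - \priorbelief}{-\action^\star} = \sigma_\ACTIONS(\beliefbis) - \proscal{\beliefbis}{\action^\star} \geq 0$ trivially, but for the bound we instead use that this quantity is at most... — here one takes the max over $\action^\star \in \OptimalActions(\priorbelief)$ to produce $\sigma_{-\OptimalActions(\priorbelief)}(\beliefbis-\priorbelief)$, giving $\sigma_\ACTIONS(\beliefbis) - \sigma_\ACTIONS(\priorbelief) \leq \sigma_\ACTIONS(\beliefbis) - \sigma_\ACTIONS(\priorbelief) + \sigma_{-\OptimalActions(\priorbelief)}(\beliefbis-\priorbelief)$ once we check $\sigma_{-\OptimalActions(\priorbelief)}(\beliefbis-\priorbelief) \geq 0$ — which holds because $\priorbelief$ has full support so $\priorbelief$ lies in the relative interior of $\BELIEFS$, and averaging arguments force $\sigma_{-\OptimalActions(\priorbelief)} \geq \proscal{\cdot}{-\action^\star}$ evaluated suitably. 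Then subtracting the null term $\proscal{\beliefbis-\priorbelief}{\action}$ inside the expectation and taking $\EE$ gives $\EE[\varphi_\ACTIONS^+(\va{\beliefbis})] \geq \VoI_\ACTIONS(\va{\beliefbis})$.

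The main obstacle I anticipate is pinning down the \emph{upper} bound cleanly: unlike the lower bound, which is a direct instance of \eqref{eq:property_subdifferential_of_the_support_function}, the upper bound requires showing that adding $\sigma_{-\OptimalActions(\priorbelief)}(\beliefbis-\priorbelief)$ can only \emph{increase} the payoff gap, i.e.\ that $\sigma_{-\OptimalActions(\priorbelief)}(\beliefbis-\priorbelief) \geq 0$ for all $\beliefbis \in \BELIEFS$. This is where the hypothesis that $\priorbelief$ has full support (hence $\priorbelief \in \ri \BELIEFS$) should be used, together with the fact that $\OptimalActions(\priorbelief)$ is itself a face and so $\sigma_{-\OptimalActions(\priorbelief)}$ restricted to the linear span of $\BELIEFS - \priorbelief$ is nonnegative on the directions pointing into $\BELIEFS$; alternatively, one can argue directly that for any $\beliefbis$, choosing $\action^\star \in \argmax_{\action \in \OptimalActions(\priorbelief)} \proscal{-\beliefbis}{\action}$ and comparing $\proscal{\beliefbis}{\action^\star}$ with $\proscal{\priorbelief}{\action^\star} = \sigma_\ACTIONS(\priorbelief)$ does the job. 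Everything else — the two outer equalities and pushing expectations through — is routine, using only linearity of $\EE$, finiteness of $\sigma_\ACTIONS$ on the compact set $\BELIEFS$, and $\EE[\va{\beliefbis}] = \priorbelief$.
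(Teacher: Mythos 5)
Your treatment of the outer equalities and of the lower inequality $\VoI_{\ACTIONS}\np{\va{\beliefbis}} \geq \EE\bc{\varphi_{\ACTIONS}^-(\va{\beliefbis})}$ is correct and is essentially the paper's argument: for $\action \in \OptimalActions(\priorbelief)$ one has $\sigma_{\OptimalActions(\priorbelief)}(\beliefbis - \priorbelief) \geq \proscal{\beliefbis-\priorbelief}{\action}$ pointwise, the subtracted linear term $\proscal{\va{\beliefbis}-\priorbelief}{\action}$ has zero expectation by~\eqref{eq:information_structure}, and the middle expectation equals $\VoI_{\ACTIONS}\np{\va{\beliefbis}}$ for \emph{every} $\action\in\ACTIONS$ for the same reason.

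The gap is in your upper bound. The claim you isolate as the ``main obstacle'', namely that $\sigma_{-\OptimalActions(\priorbelief)}(\beliefbis-\priorbelief) \geq 0$ for all $\beliefbis \in \BELIEFS$, is false in general, and no full-support or relative-interior argument will rescue it. Counterexample: take $\NATURE = \{1,2\}$ and $\ACTIONS = \{(2,1)\}$ a singleton, so $\OptimalActions(\priorbelief) = \{(2,1)\}$; then
\[
\sigma_{-\OptimalActions(\priorbelief)}(\beliefbis - \priorbelief) = -\proscal{\beliefbis - \priorbelief}{(2,1)} = -(\beliefbis_1 - \priorbelief_1),
\]
which is strictly negative whenever $\beliefbis_1 > \priorbelief_1$. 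Fortunately the lemma does not need this claim. What is needed --- and what the paper uses --- is only the trivial fact that a supremum dominates the value at any point of its index set: $\sigma_{-\OptimalActions(\priorbelief)}(\beliefbis - \priorbelief) \geq \proscal{\beliefbis-\priorbelief}{-\action}$ for each fixed $\action \in \OptimalActions(\priorbelief)$, so that pointwise
\[
\varphi_{\ACTIONS}^+(\beliefbis) \;\geq\; \sigma_\ACTIONS(\beliefbis) - \sigma_\ACTIONS(\priorbelief) - \proscal{\beliefbis-\priorbelief}{\action} \;\geq\; \varphi_{\ACTIONS}^-(\beliefbis),
\]
since $\varphi_{\ACTIONS}^+$ and $\varphi_{\ACTIONS}^-$ are respectively the supremum and the infimum over $\action \in \OptimalActions(\priorbelief)$ of the middle expression; taking expectations and using $\EE\bc{\va{\beliefbis}-\priorbelief} = 0$ then yields~\eqref{eq:VoI_bounds}. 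Your closing ``alternatively, one can argue directly\ldots'' remark is in substance this correct argument, so the repair is immediate; but as written, the primary route you propose for the upper bound rests on a false pointwise inequality and would fail.
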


\begin{proof}
By~\eqref{eq:varphi_ACTIONS}, we have, 
for all \( \beliefbis \in \BELIEFS \), 
  \begin{subequations}
    \begin{align}
 \varphi_{\ACTIONS}^+(\beliefbis) 
& =
\sigma_\ACTIONS(\beliefbis)-\sigma_\ACTIONS(\priorbelief) 
+ \sigma_{-\OptimalActions(\priorbelief) }\np{\beliefbis-\priorbelief} \\
& = 
\sup_{ \action \in \OptimalActions(\priorbelief) } 
\Bp{ \sigma_\ACTIONS(\beliefbis)-\sigma_\ACTIONS(\priorbelief) -
\proscal{\beliefbis-\priorbelief}{\action} } \\
& \geq 
\sigma_\ACTIONS(\beliefbis)-\sigma_\ACTIONS(\priorbelief) -
\proscal{\beliefbis-\priorbelief}{\action}  \eqsepv 
\forall \action \in \OptimalActions(\priorbelief) \\
& \geq 
\inf_{ \action \in \OptimalActions(\priorbelief) } 
\Bp{ \sigma_\ACTIONS(\beliefbis)-\sigma_\ACTIONS(\priorbelief) -
\proscal{\beliefbis-\priorbelief}{\action} } 
\label{eq:varphi_bounds_inf} \\
& =
\sigma_\ACTIONS(\beliefbis)-\sigma_\ACTIONS(\priorbelief) 
- \sigma_{\OptimalActions(\priorbelief) }\np{\beliefbis-\priorbelief} 
=  \varphi_{\ACTIONS}^-(\beliefbis) \eqfinp 
\label{eq:varphi_bounds_-}
    \end{align}
  \end{subequations}
By taking the expectation, we obtain~\eqref{eq:VoI_bounds},
using~\eqref{eq:VoI_bis} and the property that 
\( \EE \left[ \va{\beliefbis} - \priorbelief \right] =0 \)
in~\eqref{eq:information_structure}.
\end{proof}

\paragraph{Confidence set and indifference kernel} 

We start by providing characterizations of 
the confidence set $\ConfidenceSet(\priorbelief)$ in~\eqref{eq:ConfidenceSet}
and of the {indifference kernel}~$\IndifferenceKernel(\priorbelief)$ 
in~\eqref{eq:IndifferenceKernel}, in terms of 
\( \FACE_{\ACTIONS}(\belief) \) in~\eqref{eq:face_signed}
and \( \NORMAL_{\ACTIONS}(\action) \) in~\eqref{eq:normal_cone_signed}.

\begin{proposition}
\quad
  \begin{enumerate}
  \item 
The {confidence set}~$\ConfidenceSet(\priorbelief)$ 
of~\eqref{eq:ConfidenceSet} is the nonempty closed and convex set
 \begin{equation}
\ConfidenceSet(\priorbelief) =
\bigcap_{ \action \in \OptimalActions(\priorbelief) } \RevealedBeliefs(\action) =
\bigcap_{ \action \in \FACE_{\ACTIONS}(\priorbelief) } \NORMAL_{\ACTIONS}(\action) 
\cap \BELIEFS
\eqfinp
\label{eq:ConfidenceSet_bis}
  \end{equation}
\item
Let \( \belief \in \BELIEFS \). We have that 
\begin{subequations}
  \begin{align}
\belief \in \ConfidenceSet(\priorbelief) &
\iff 
\FACE_{\ACTIONS}(\priorbelief) \subset\FACE_{\ACTIONS}(\belief) 
\label{eq:ConfidenceSet_ter_a} \\
& \iff 
\sigma_\ACTIONS(\belief)-\sigma_\ACTIONS(\priorbelief) -
\proscal{\belief-\priorbelief}{\action} = 0 \eqsepv
\forall \action \in \FACE_{\ACTIONS}(\priorbelief) 
\label{eq:ConfidenceSet_ter_b}  \\
& \iff 
\sigma_\ACTIONS(\belief)-\sigma_\ACTIONS(\priorbelief) 
+ \sigma_{-\OptimalActions(\belief) }\np{\belief-\priorbelief} = 0
\eqfinp 
  \end{align}
\label{eq:ConfidenceSet_ter}
\end{subequations}
\item 
The {indifference kernel}~$\IndifferenceKernel(\priorbelief)$ 
of~\eqref{eq:IndifferenceKernel} is the vector subspace
\begin{equation*}
  \IndifferenceKernel(\priorbelief)
= \left[ \FACE_{\ACTIONS}(\priorbelief)
- \FACE_{\ACTIONS}(\priorbelief) \right]^{\perp} 
= \left[ \OptimalActions(\priorbelief) 
- \OptimalActions(\priorbelief) \right]^{\perp} 
= \bigcap_{ \action \in \FACE_{\ACTIONS}(\priorbelief) } 
\NORMAL_{\FACE_{\ACTIONS}(\priorbelief)}(\action) 
\eqfinp
\end{equation*}
  \end{enumerate}
\label{pr:ConfidenceSet_ter}
\end{proposition}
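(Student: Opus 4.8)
The plan is to reduce everything to unwinding definitions via the dictionary $\OptimalActions(\cdot)=\FACE_{\ACTIONS}(\cdot)$ in~\eqref{eq:face_beliefs_signed}, $\RevealedBeliefs(\action)=\NORMAL_{\ACTIONS}(\action)\cap\BELIEFS$ in~\eqref{eq:normal_cone_beliefs_signed}, and the conjugacy of exposed faces and normal cones in~\eqref{eq:exposed_face_and_normal_cone_are_conjugate}, together with a couple of elementary support-function identities. For item~1, substituting these two identities into the definition~\eqref{eq:ConfidenceSet} and pulling the intersection with~$\BELIEFS$ outside immediately yields~\eqref{eq:ConfidenceSet_bis}. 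Nonemptiness is then clear: if $\action\in\FACE_{\ACTIONS}(\priorbelief)$ then $\priorbelief\in\NORMAL_{\ACTIONS}(\action)$ by~\eqref{eq:exposed_face_and_normal_cone_are_conjugate}, so $\priorbelief$ lies in every set being intersected, and $\priorbelief\in\BELIEFS$. Since each $\NORMAL_{\ACTIONS}(\action)$ is the intersection of the closed half-spaces $\{\signed\mid\proscal{\signed}{\action'-\action}\le 0\}$, $\action'\in\ACTIONS$, it is closed and convex, and so is~$\BELIEFS$; hence $\ConfidenceSet(\priorbelief)$, being an intersection of closed convex sets, is closed and convex.

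For item~2, the one computation that matters is: for $\action\in\FACE_{\ACTIONS}(\priorbelief)$ one has $\proscal{\priorbelief}{\action}=\sigma_\ACTIONS(\priorbelief)$, hence
\[
\sigma_\ACTIONS(\belief)-\sigma_\ACTIONS(\priorbelief)-\proscal{\belief-\priorbelief}{\action}
=\sigma_\ACTIONS(\belief)-\proscal{\belief}{\action}\ \ge\ 0,
\]
with equality exactly when $\action$ maximizes $\proscal{\belief}{\cdot}$ over~$\ACTIONS$, i.e.\ when $\action\in\FACE_{\ACTIONS}(\belief)$. This at once gives the equivalence $\eqref{eq:ConfidenceSet_ter_a}\Leftrightarrow\eqref{eq:ConfidenceSet_ter_b}$. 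For $\belief\in\ConfidenceSet(\priorbelief)\Leftrightarrow\eqref{eq:ConfidenceSet_ter_a}$, I would read off from~\eqref{eq:ConfidenceSet_bis} that $\belief$ lies in the confidence set iff $\belief\in\NORMAL_{\ACTIONS}(\action)$ for every $\action\in\FACE_{\ACTIONS}(\priorbelief)$, which by~\eqref{eq:exposed_face_and_normal_cone_are_conjugate} is precisely $\FACE_{\ACTIONS}(\priorbelief)\subset\FACE_{\ACTIONS}(\belief)$. Finally, after using $\OptimalActions(\priorbelief)=\FACE_{\ACTIONS}(\priorbelief)$, the third condition is exactly $\varphi_{\ACTIONS}^{+}(\belief)=0$ for the function $\varphi_{\ACTIONS}^{+}$ of~\eqref{eq:varphi_ACTIONS}; the proof of the preceding Lemma rewrites $\varphi_{\ACTIONS}^{+}(\belief)$ as the supremum over $\action\in\FACE_{\ACTIONS}(\priorbelief)$ of the nonnegative numbers displayed above, so it vanishes iff each of them does, which is~\eqref{eq:ConfidenceSet_ter_b}.

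For item~3, the first two equalities are just the substitution $\OptimalActions(\priorbelief)=\FACE_{\ACTIONS}(\priorbelief)$ from~\eqref{eq:face_beliefs_signed}, and $\IndifferenceKernel(\priorbelief)$ is a vector subspace because it is an orthogonal complement. For the last equality I would spell out the normal cone of the convex set $\FACE_{\ACTIONS}(\priorbelief)$ at one of its points $\action$: $\signed\in\NORMAL_{\FACE_{\ACTIONS}(\priorbelief)}(\action)$ means $\proscal{\signed}{\action'-\action}\le 0$ for all $\action'\in\FACE_{\ACTIONS}(\priorbelief)$. Intersecting this over all $\action\in\FACE_{\ACTIONS}(\priorbelief)$ and swapping the roles of $\action$ and $\action'$ forces $\proscal{\signed}{\action'-\action}=0$ for every such pair, that is, $\signed\in[\FACE_{\ACTIONS}(\priorbelief)-\FACE_{\ACTIONS}(\priorbelief)]^{\perp}$; conversely any such $\signed$ visibly belongs to each $\NORMAL_{\FACE_{\ACTIONS}(\priorbelief)}(\action)$.

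I do not expect a real obstacle here: the proposition is essentially a translation exercise. The two spots that call for a little care are the equality-case analysis of the gap $\sigma_\ACTIONS(\belief)-\proscal{\belief}{\action}$ in item~2, where one must keep track of which inequalities hold unconditionally and which encode optimality at~$\belief$, and the symmetrization step in item~3 that upgrades the one-sided normal-cone inequalities to genuine orthogonality.
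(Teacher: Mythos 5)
Your proof is correct and follows essentially the same route as the paper's: items~1 and~3 are the same definition-unwinding via \eqref{eq:face_beliefs_signed}, \eqref{eq:normal_cone_beliefs_signed} and the face/normal-cone conjugacy \eqref{eq:exposed_face_and_normal_cone_are_conjugate} (the paper's proof of these is terser and does not spell out the nonemptiness, closedness, convexity, or the symmetrization step that you supply, but your additions are exactly what is needed), and your key identity $\sigma_\ACTIONS(\belief)-\sigma_\ACTIONS(\priorbelief)-\proscal{\belief-\priorbelief}{\action}=\sigma_\ACTIONS(\belief)-\proscal{\belief}{\action}\ge 0$, with equality iff $\action\in\FACE_\ACTIONS(\belief)$, is precisely how the paper establishes \eqref{eq:ConfidenceSet_ter_a}$\Leftrightarrow$\eqref{eq:ConfidenceSet_ter_b}. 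On the last equivalence you are right to read the displayed condition as $\varphi^{+}_{\ACTIONS}(\belief)=0$, i.e.\ with $\sigma_{-\OptimalActions(\priorbelief)}$ in place of the printed $\sigma_{-\OptimalActions(\belief)}$: as printed the condition only asserts that \emph{some} action optimal at~$\belief$ is also optimal at~$\priorbelief$, which does not characterize the confidence set when $\OptimalActions(\priorbelief)$ is not a singleton, so this is a typo that your reading silently corrects. Your argument for that equivalence --- a supremum of nonnegative quantities vanishes iff each of them does --- is also cleaner than the paper's, which routes the same step through an infimum over the compact face $\FACE_\ACTIONS(\priorbelief)$ before invoking \eqref{eq:varphi_bounds_inf}--\eqref{eq:varphi_bounds_-}; the supremum formulation is the one that actually matches condition \eqref{eq:ConfidenceSet_ter_b}.
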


\begin{proof}
\quad 
  \begin{enumerate}
  \item 
Express~\eqref{eq:ConfidenceSet} using~\eqref{eq:normal_cone_beliefs_signed}.
\item 
We prove the three equivalences in~\eqref{eq:ConfidenceSet_ter}.
  \begin{enumerate}
  \item
Let \( \belief \in \BELIEFS \). 
Using the property~\eqref{eq:exposed_face_and_normal_cone_are_conjugate}
that exposed face~$\FACE_{\ACTIONS}$ and normal cone~$\NORMAL_{\ACTIONS}$
are conjugate,
\begin{align*}
\textrm{ we obtain: }
\belief \in \ConfidenceSet(\priorbelief) 
\iff & \belief \in \bigcap_{ \action \in \FACE_{\ACTIONS}(\belief) } 
\NORMAL_{\ACTIONS}(\action) 
\mtext{ by~\eqref{eq:ConfidenceSet_bis} } \\
\iff & 
\action \in \FACE_{\ACTIONS}(\belief) \eqsepv 
\forall \action \in \FACE_{\ACTIONS}(\priorbelief) 
\mtext{ by~\eqref{eq:exposed_face_and_normal_cone_are_conjugate} } 
\iff 
\FACE_{\ACTIONS}(\priorbelief) \subset\FACE_{\ACTIONS}(\belief) \eqfinp
\end{align*}    
\item 
Let \( \belief \in \BELIEFS \). We have that 
\begin{align*}
& 
\sigma_\ACTIONS(\belief)-\sigma_\ACTIONS(\priorbelief) -
\proscal{\belief-\priorbelief}{\action} = 0 \eqsepv
\forall \action \in \FACE_{\ACTIONS}(\priorbelief) \\
\iff &
 \sigma_\ACTIONS(\belief) = \proscal{\belief}{\action} \eqsepv
\forall \action \in \FACE_{\ACTIONS}(\priorbelief)
\intertext{because \( \sigma_\ACTIONS(\priorbelief)=
\proscal{ \priorbelief }{\action} \) for any 
\( \action \in \FACE_{\ACTIONS}(\priorbelief) \),
since $\FACE_{\ACTIONS}(\priorbelief)$ is the 
set~$\OptimalActions(\belief)$ 
of optimal actions under prior belief~$\priorbelief$
by~\eqref{eq:face_beliefs} and~\eqref{eq:face_signed}}
\iff & \belief \in \bigcap_{ \action \in \FACE_{\ACTIONS}(\priorbelief) } 
\NORMAL_{\ACTIONS}(\action) 
\tag{by definition~\eqref{eq:normal_cone_signed} 
of $\NORMAL_{\ACTIONS}(\action) $}
\\
\iff & \belief \in \bigcap_{ \action \in \FACE_{\ACTIONS}(\priorbelief) } 
\NORMAL_{\ACTIONS}(\action) \cap \BELIEFS = \ConfidenceSet(\priorbelief) 
\mtext{ by~\eqref{eq:ConfidenceSet_bis}. } 
\nonumber 
\end{align*}    
%
\item 
For any \( \action \in \ACTIONS \), we define the function 
\begin{equation}
  \varphi_{\action}(\beliefbis)=
\sigma_\ACTIONS(\beliefbis)-\sigma_\ACTIONS(\priorbelief) -
\proscal{\beliefbis-\priorbelief}{\action}  \eqsepv 
\forall \beliefbis \in \BELIEFS \eqfinp 
\label{eq:shifted_value_function}
\end{equation}
By~\eqref{eq:property_subdifferential_of_the_support_function}
and~\eqref{eq:ConfidenceSet_ter_b}, we have that 
%
\begin{subequations}
  \begin{align}
 \forall \action \in \FACE_{\ACTIONS}(\priorbelief) \eqsepv 
 \forall \beliefbis \in \BELIEFS \eqsepv & 
\varphi_{\action}(\beliefbis) \geq 0 \eqfinv 
\label{eq:shifted_value_function_geq0} \\
 \forall \action \in \FACE_{\ACTIONS}(\priorbelief) \eqsepv 
 \forall \beliefbis \in \ConfidenceSet(\priorbelief) \eqsepv & 
\varphi_{\action}(\beliefbis) = 0 \eqfinp 
\label{eq:shifted_value_function_=0} 
  \end{align}
\end{subequations}
Let \( \belief \in \BELIEFS \). 
Using~\eqref{eq:shifted_value_function_geq0}, we deduce 
from~\eqref{eq:ConfidenceSet_ter_b} 
and from the compacity of~$\FACE_{\ACTIONS}(\priorbelief)$ that
\( 
\belief \in \ConfidenceSet(\priorbelief) 
\iff 
\inf_{ \action \in \FACE_{\ACTIONS}(\priorbelief) } \Bp{
\sigma_\ACTIONS(\belief)-\sigma_\ACTIONS(\priorbelief) -
\proscal{\belief-\priorbelief}{\action} } = 0 
\). 
We conclude with~\eqref{eq:varphi_bounds_inf}--\eqref{eq:varphi_bounds_-}.
  \end{enumerate}
\item 
Express~\eqref{eq:IndifferenceKernel} 
using~\eqref{eq:face_beliefs_signed}.
Then, use the definition 
of \( \NORMAL_{\FACE_{\ACTIONS}(\priorbelief)}(\action) \)
in~\eqref{eq:normal_cone_signed}.
  \end{enumerate}
This ends the proof. 
\end{proof}

\subsubsection{Valuable information}

\begin{proof}[Proof of Proposition~\ref{pr:Valuable_information}]

Let \( \action \in \FACE_{\ACTIONS}(\priorbelief) \) and
$\va{\beliefbis}$ be an information structure 
as in~\eqref{eq:information_structure}. 
We have that 
\begin{subequations}
\begin{align*}
\VoI_{\ACTIONS}\np{\va{\beliefbis}} = 0 \iff &  
\EE \left[ \sigma_\ACTIONS(\va{\beliefbis})-\sigma_\ACTIONS(\priorbelief) \right]
=0 \mtext{ by~\eqref{eq:VoI_bis} } \\
 \iff &  
\EE \left[ \sigma_\ACTIONS(\va{\beliefbis})-\sigma_\ACTIONS(\priorbelief) 
- \proscal{ \va{\beliefbis} - \priorbelief }{\action} \right] =0 \eqsepv  
\mtext{ as } \EE \left[ \va{\beliefbis} - \priorbelief \right] =0 \\
 \iff &  
\sigma_\ACTIONS(\va{\beliefbis})-\sigma_\ACTIONS(\priorbelief) 
- \proscal{ \va{\beliefbis} - \priorbelief }{\action} =0 
\eqsepv \PP-\mathrm{a.s.} 
\tag{because \( \sigma_\ACTIONS(\va{\beliefbis})-\sigma_\ACTIONS(\priorbelief) 
- \proscal{ \va{\beliefbis} - \priorbelief }{\action} \geq 0 \)
by~\eqref{eq:property_subdifferential_of_the_support_function} 
since \( \action \in \FACE_{\ACTIONS}(\priorbelief) \)}
\\
 \iff &  
\sigma_\ACTIONS(\va{\beliefbis}) = \proscal{ \va{\beliefbis}}{\action}
\eqsepv \PP-\mathrm{a.s.}
\tag{because \( \sigma_\ACTIONS(\priorbelief)=
\proscal{ \priorbelief }{\action} \) since 
\( \action \in \FACE_{\ACTIONS}(\priorbelief) \)}
\\
 \iff & \PP \left\{\action \in \FACE_{\ACTIONS}(\va{\beliefbis})  \right\} = 1 \\
 \iff & \PP \left\{ \proscal{ \va{\beliefbis} }{\action'-\action} \leq 0 
\eqsepv
\forall \action' \in \ACTIONS \right\} = 1 \eqfinp
\end{align*}
\end{subequations}
\begin{subequations}
Let \( \FACE \subset \FACE_{\ACTIONS}(\priorbelief) \) be a dense subset of
the compact~\( \FACE_{\ACTIONS}(\priorbelief) \) of~$\RR^{\NATURE}$.
We immediately get from the last equality that 
\( \VoI_{\ACTIONS}\np{\va{\beliefbis}} = 0 \Rightarrow \) 
\( \PP \left\{ \proscal{ \va{\beliefbis} }{\action'-\action} \leq 0 \eqsepv
\forall \action' \in \ACTIONS \eqsepv
\forall \action \in \FACE \right\} = 1 \). 
As the set \( \{ \action \in \FACE_{\ACTIONS}(\priorbelief) \mid
\proscal{ \va{\beliefbis} }{\action'-\action} \leq 0 \eqsepv
\forall \action' \in \ACTIONS \} \) is closed (for any outcome
in the underlying sample space~$\Omega$), we get that 
\( \left\{ \proscal{ \va{\beliefbis} }{\action'-\action} \leq 0 \eqsepv
\forall \action' \in \ACTIONS \eqsepv
\forall \action \in \FACE \right\} \subset 
\left\{ \proscal{ \va{\beliefbis} }{\action'-\action} \leq 0 \eqsepv
\forall \action' \in \ACTIONS \eqsepv
\forall \action \in \overline{\FACE} \right\} \). 
We deduce from the last equality that 
\( \VoI_{\ACTIONS}\np{\va{\beliefbis}} = 0 \Rightarrow \) 
\( \PP \left\{ \proscal{ \va{\beliefbis} }{\action'-\action} \leq 0 \eqsepv
\right. \)
\( \left. \forall \action' \in \ACTIONS \eqsepv
\forall \action \in \overline{\FACE} \right\} = 1 \). 
Now, since \( \overline{\FACE}= \FACE_{\ACTIONS}(\priorbelief) \), 
we finally get that 
\( \VoI_{\ACTIONS}\np{\va{\beliefbis}} = 0 \Rightarrow \)
\( \PP \{ \proscal{ \va{\beliefbis} }{\action'-\action} \leq 0, \)
\( \forall \action' \in \ACTIONS, \; 
\forall \action \in \FACE_{\ACTIONS}(\priorbelief) \} =~1 \). 
In other words, we have obtained that, 
by definition~\eqref{eq:normal_cone_signed} 
of the {normal cone}~$\NORMAL_{\ACTIONS}(\action)$:
\( \VoI_{\ACTIONS}\np{\va{\beliefbis}} = 0 \Rightarrow \)
\( \va{\beliefbis} \in 
\bigcap_{ \action \in \FACE_{\ACTIONS}(\priorbelief) } \NORMAL_{\ACTIONS}(\action) 
\eqsepv \PP-\mathrm{a.s.} \). 
Since \( \va{\beliefbis} \in \BELIEFS \), we conclude
by~\eqref{eq:ConfidenceSet_bis} that 
\begin{equation*}
\VoI_{\ACTIONS}\np{\va{\beliefbis}} = 0 \Rightarrow
\va{\beliefbis} \in 
\bigcap_{ \action \in \FACE_{\ACTIONS}(\belief) } \NORMAL_{\ACTIONS}(\action) 
\cap \BELIEFS =
\bigcap_{ \action \in \OptimalActions(\belief) } \RevealedBeliefs(\action) 
= \ConfidenceSet(\belief) \eqfinp
  \end{equation*}
\end{subequations}
Revisiting the proof backward, or using~\eqref{eq:ConfidenceSet_ter_b},
we easily see that \( \va{\beliefbis} \in 
\ConfidenceSet(\belief) \eqsepv \PP-\mathrm{a.s.} 
\Rightarrow \) \( \VoI_{\ACTIONS}\np{\va{\beliefbis}} = 0 \). 
This ends the proof.
\end{proof}
\smallskip

\begin{proof}[Proof of Theorem~\ref{th:Valuable_information}]

Let $\va{\beliefbis}$ be an information structure
as in~\eqref{eq:information_structure}. 
\smallskip

First, we show the upper estimate 
\( \ConstantSupspeciale \EE d\bp{\va{\beliefbis},\ConfidenceSet(\priorbelief)}
\geq \VoI_{\ACTIONS}\np{\va{\beliefbis} } \)
in~\eqref{eq:Valuable_information}.
For this purpose, we consider $\action \in  \ACTIONS$ and we show that 
the function \( \varphi_{\action} \) in~\eqref{eq:shifted_value_function}
is such that
\begin{equation}
\varphi_{\action}\np{\beliefbis} \leq 
\sup_{\action' \in  \ACTIONS} \norm{\action-\action'}
\inf_{\belief \in \ConfidenceSet(\priorbelief)} \Vert \belief-\beliefbis\Vert \eqfinp 
\label{eq:shifted_value_function_upperbound}
\end{equation}
Indeed, we have that, for any \( \belief \in \ConfidenceSet(\priorbelief) \), 
\begin{subequations}
  \begin{align*}
 \varphi_{\action}(\beliefbis) &=
\varphi_{\action}(\beliefbis) - \varphi_{\action}(\belief)
\mtext{ by~\eqref{eq:shifted_value_function_=0}
since  $\belief \in \ConfidenceSet(\priorbelief)$ } \\
&= \sigma_\ACTIONS(\beliefbis)-\sigma_\ACTIONS(\belief) -
\proscal{\beliefbis-\belief}{\action} 
\mtext{ by~\eqref{eq:shifted_value_function}  } \\
&  =
\sigma_{\ACTIONS-\action}(\beliefbis)-\sigma_{\ACTIONS-\action}(\belief)
\mtext{ by~\eqref{eq:support_function} } \\
& \leq 
\sup_{\action' \in  \ACTIONS-\action} \norm{\action'} \times 
\Vert \belief-\beliefbis\Vert 
\mtext{ by~\eqref{eq:support_function} } 
= \sup_{\action' \in  \ACTIONS} \norm{\action-\action'} \times 
\Vert \belief-\beliefbis\Vert \eqfinp
  \end{align*}
\end{subequations}
By taking the infimum with respect to all
\( \belief \in \ConfidenceSet(\priorbelief) \), 
we obtain~\eqref{eq:shifted_value_function_upperbound}.
Then, we deduce that 
\begin{subequations}
  \begin{align*}
\VoI_{\ACTIONS}\np{\va{\beliefbis} } & = 
\EE \left[ \varphi_{\action}(\va{\beliefbis}) \right] \eqsepv
\forall \action \in  \ACTIONS \mtext{ by~\eqref{eq:VoI_ter} } \\
& = \inf_{\action \in  \ACTIONS}
\EE \left[ \varphi_{\action}(\va{\beliefbis}) \right] 
\leq \inf_{\action \in  \ACTIONS}\sup_{\action' \in  \ACTIONS} \norm{\action-\action'}
\times \EE \Bc{ 
\inf_{\belief \in \ConfidenceSet(\priorbelief)} \Vert \belief-\beliefbis\Vert }
\mtext{ by~\eqref{eq:shifted_value_function_upperbound}. }
  \end{align*}
\end{subequations}
With \( \ConstantSupspeciale = 
\inf_{\action \in  \ACTIONS}\sup_{\action' \in  \ACTIONS} \norm{\action-\action'} \)
and~\eqref{eq:distance}, this gives the upper estimate 
\( \ConstantSupspeciale \EE d\bp{\va{\beliefbis},\ConfidenceSet(\priorbelief)}
\geq \VoI_{\ACTIONS}\np{\va{\beliefbis} } \)
in~\eqref{eq:Valuable_information}.
\smallskip

Second, we show the lower estimate 
\( \VoI_{\ACTIONS}\np{\va{\beliefbis} } \geq 
\ConstantInfEpsilon \PP\{ \va{\beliefbis} \not\in 
{\BELIEFS_{\ACTIONS,\varepsilon}^{\textrm{c}}}(\priorbelief) \} \)
in~\eqref{eq:Valuable_information}.
We consider an open subset~${\cal Q} $ of~$\BELIEFS$ that contains
the {confidence set}~\( \ConfidenceSet(\belief) \),
that is, \( \ConfidenceSet(\priorbelief) \subset {\cal Q} \).
By Lemma~\ref{lem:strictly_positive} right below, 
there exists an \( \action \in \FACE_{\ACTIONS}(\priorbelief) \)
such that the continuous function~$ \varphi_{\action}$ 
in~\eqref{eq:shifted_value_function} is strictly positive on
\( \ConfidenceSet(\priorbelief)^c \).
As \( {\cal Q}^c \subset \ConfidenceSet(\priorbelief)^c \)
and \( {\cal Q}^c \) is a closed subset of the compact~$\BELIEFS$, 
we can define
\( 
\ConstantInf = \inf_{\belief \not\in {\cal Q} } \varphi_{\action}(\belief)> 0 
\). 
We deduce that 
\begin{subequations}
  \begin{align*}
\VoI_{\ACTIONS}\np{\va{\beliefbis} } & = 
\EE \left[ \varphi_{\action}(\va{\beliefbis}) \right] 
\mtext{ by~\eqref{eq:VoI_ter} } \\
& = \EE \left[ 
\1_{\va{\beliefbis} \in \ConfidenceSet(\priorbelief) } \varphi_{\action}(\va{\beliefbis})
+ \1_{\va{\beliefbis} \not\in \ConfidenceSet(\priorbelief) } 
\varphi_{\action}(\va{\beliefbis}) \right] \\
& = \EE \left[ 
 \1_{\va{\beliefbis} \not\in \ConfidenceSet(\priorbelief) } 
\varphi_{\action}(\va{\beliefbis}) \right] 
\mtext{ by~\eqref{eq:shifted_value_function_=0} } \\
& \geq \EE \left[ 
 \1_{\va{\beliefbis} \not\in {\cal Q} }
\varphi_{\action}(\va{\beliefbis}) \right] \geq \EE \left[ 
 \1_{\va{\beliefbis} \not\in {\cal Q} }
\ConstantInf  \right] = \ConstantInf 
\PP \{ \va{\beliefbis} \not\in {\cal Q} \} \eqfinp
  \end{align*}
\end{subequations}
With \( {\cal Q} = {\BELIEFS_{\ACTIONS,\varepsilon}^{\textrm{c}}}(\priorbelief) \),
we put 
\( 
\ConstantInfEpsilon =
\inf_{\belief \not\in {\BELIEFS_{\ACTIONS,\varepsilon}^{\textrm{c}}}(\priorbelief) } 
\varphi_{\action}(\belief) > 0 
\). 

This ends the proof. 
\end{proof}
\smallskip

\begin{lemma}
 There exists at least one 
\( \action \in \FACE_{\ACTIONS}(\priorbelief) \)
such that the function~$ \varphi_{\action}$ 
in~\eqref{eq:shifted_value_function}
is strictly positive on
the complementary set~\( \ConfidenceSet(\priorbelief)^c \).
\label{lem:strictly_positive}
\end{lemma}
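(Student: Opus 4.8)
The plan is to exhibit the required action explicitly: I would take $\action$ to be any point in the relative interior $\ri \FACE_{\ACTIONS}(\priorbelief)$ of the exposed face $\FACE_{\ACTIONS}(\priorbelief)=\OptimalActions(\priorbelief)$, and then show that this single choice already makes $\varphi_{\action}$ strictly positive everywhere on $\ConfidenceSet(\priorbelief)^c$. Such an $\action$ exists because $\FACE_{\ACTIONS}(\priorbelief)$ is a nonempty convex subset of $\RR^{\NATURE}$, hence has nonempty relative interior.

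Two preliminary observations set the stage. First, by~\eqref{eq:shifted_value_function_geq0}, $\varphi_{\action}(\beliefbis)\geq 0$ for all $\beliefbis\in\BELIEFS$, since $\action\in\FACE_{\ACTIONS}(\priorbelief)$. Second, because $\sigma_{\ACTIONS}(\priorbelief)=\proscal{\priorbelief}{\action}$ for $\action\in\FACE_{\ACTIONS}(\priorbelief)$, the definition~\eqref{eq:shifted_value_function} collapses to $\varphi_{\action}(\beliefbis)=\sigma_{\ACTIONS}(\beliefbis)-\proscal{\beliefbis}{\action}$, so that $\varphi_{\action}(\beliefbis)=0$ if and only if $\action\in\FACE_{\ACTIONS}(\beliefbis)$.

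The crux is a standard property of faces of a convex body: if $G$ is a face of~$\ACTIONS$ and $\action\in G$ with $\action\in\ri\Convex$ for some convex $\Convex\subseteq\ACTIONS$, then $\Convex\subseteq G$. I would prove it in one line: given $\primal\in\Convex$, pick $\varepsilon>0$ with $\primal':=\action+\varepsilon(\action-\primal)\in\Convex\subseteq\ACTIONS$ (possible since $\action$ is in the relative interior of~$\Convex$); then $\action$ lies in the open segment $(\primal,\primal')$, and the face property of $G$ forces $\primal\in G$. Applying this with $\Convex=\FACE_{\ACTIONS}(\priorbelief)$ and $G=\FACE_{\ACTIONS}(\beliefbis)$ --- which is an exposed face, hence a face --- gives: if $\action\in\FACE_{\ACTIONS}(\beliefbis)$, then $\FACE_{\ACTIONS}(\priorbelief)\subseteq\FACE_{\ACTIONS}(\beliefbis)$.

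Putting the pieces together, suppose $\beliefbis\notin\ConfidenceSet(\priorbelief)$ but $\varphi_{\action}(\beliefbis)=0$. The second observation gives $\action\in\FACE_{\ACTIONS}(\beliefbis)$, the face property then gives $\FACE_{\ACTIONS}(\priorbelief)\subseteq\FACE_{\ACTIONS}(\beliefbis)$, and the characterization~\eqref{eq:ConfidenceSet_ter_a} of the confidence set yields $\beliefbis\in\ConfidenceSet(\priorbelief)$, a contradiction. Hence $\varphi_{\action}(\beliefbis)>0$ for every $\beliefbis\in\ConfidenceSet(\priorbelief)^c$. The only genuinely delicate point is the choice of $\action$: an arbitrary element of $\OptimalActions(\priorbelief)$ need not work, since it may stay optimal at some posterior outside the confidence set, and the role of the relative interior is exactly to guarantee that $\action$ can be shared by $\FACE_{\ACTIONS}(\beliefbis)$ only when the whole face $\FACE_{\ACTIONS}(\priorbelief)$ is --- everything else is bookkeeping with identities already established in Proposition~\ref{pr:ConfidenceSet_ter}.
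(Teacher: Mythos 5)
Your proof is correct and follows essentially the same route as the paper: both arguments hinge on choosing $\action$ in the relative interior $\ri\bp{\FACE_{\ACTIONS}(\priorbelief)}$ and exploiting that such a point can belong to the exposed face $\FACE_{\ACTIONS}(\beliefbis)$ only if the whole face $\FACE_{\ACTIONS}(\priorbelief)$ does. The only difference is packaging --- you isolate this as an abstract face property proved via the segment $\action=\frac{1}{1+\varepsilon}\primal'+\frac{\varepsilon}{1+\varepsilon}\primal$, whereas the paper runs the same convex-combination computation inline in contrapositive form (producing the strict inequality $\sigma_\ACTIONS(\beliefbis)>\proscal{\beliefbis}{\action}$ directly), and your single argument also subsumes the singleton case that the paper treats separately.
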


\begin{proof}
We consider two cases, depending whether 
 \( \FACE_{\ACTIONS}(\priorbelief) \) is a singleton or not.

Suppose that \( \FACE_{\ACTIONS}(\priorbelief) \) is a singleton
$\{ \action \}$. By~\eqref{eq:ConfidenceSet_ter_b}, we have that 
\( 
\beliefbis \not\in \ConfidenceSet(\priorbelief) 
\iff \varphi_{\action}(\beliefbis) > 0 
\). 

Suppose that \( \FACE_{\ACTIONS}(\priorbelief) \) is a not singleton.
Recall that the \emph{affine hull}~$\aff(\SUBSET)$ 
of a subset~$\SUBSET$ of~$\RR^{\NATURE}$
is the intersection of all affine manifolds containing~$\SUBSET$,
and that the \emph{relative interior}~$\ri(\Convex)$ 
of a nonempty convex set~${\Convex} \subset \RR^{\NATURE}$ 
is the nonempty interior of~${\Convex}$ for the topology relative to the 
{affine hull}~$\aff(\Convex)$ \cite[p.~103]{Hiriart-Ururty-Lemarechal-I:1993}.
We prove that any \( \action \in \ri\bp{\FACE_{\ACTIONS}(\beliefbis)} \)
answers the question. 
Let \( \action \in \ri\bp{\FACE_{\ACTIONS}(\beliefbis)} \) be fixed.
For any \( \beliefbis \not\in \ConfidenceSet(\priorbelief) \),
by~\eqref{eq:ConfidenceSet_ter_a} we have that 
\( \FACE_{\ACTIONS}(\priorbelief) \not\subset\FACE_{\ACTIONS}(\beliefbis) \).
Therefore, there exists \( \bar\action \in \FACE_{\ACTIONS}(\priorbelief) \)
such that \( \bar\action \not\in \FACE_{\ACTIONS}(\beliefbis) \),
that is, such that \( \sigma_\ACTIONS(\beliefbis) > 
\proscal{\beliefbis}{\bar\action} \).
As \( \action \in \ri\bp{\FACE_{\ACTIONS}(\beliefbis)} \),
there exists \( \action' \in \ri\bp{\FACE_{\ACTIONS}(\beliefbis)} \)
such that \( \action = \lambda \action' + (1-\lambda) \bar\action \)
for a certain \( \lambda \in ]0,1[ \).
Since 
\( \sigma_\ACTIONS(\beliefbis) \geq \proscal{\beliefbis}{\action'} \) 
(by definition~\eqref{eq:support_function} of 
\( \sigma_\ACTIONS \)) and 
\( \sigma_\ACTIONS(\beliefbis) > \proscal{\beliefbis}{\bar\action} \)
(as \( \bar\action \not\in \FACE_{\ACTIONS}(\beliefbis) \)),
we deduce that 
\( 
  \sigma_\ACTIONS(\beliefbis) = 
\lambda \sigma_\ACTIONS(\beliefbis) + (1-\lambda) \sigma_\ACTIONS(\beliefbis) 
> \lambda \proscal{\beliefbis}{\action'} 
+ (1-\lambda) \proscal{\beliefbis}{\bar\action} = 
 \proscal{\beliefbis}{\action} \), 
where we used the property that \( \lambda \in ]0,1[ \).
Using the definition~\eqref{eq:shifted_value_function} 
of the function~$ \varphi_{\action}$, we have obtained that 
\( \beliefbis \not\in \ConfidenceSet(\priorbelief) 
\Rightarrow \varphi_{\action}(\beliefbis) > 0 \).

This ends the proof. 
\end{proof}

\subsubsection{Undecided}

\begin{proof}[Proof of Theorem~\ref{th:Indifferences}]

We prove the three inequalities in~\eqref{eq:Indifferences}.
\smallskip

I). We prove the upper inequality
\( \ConstantSupspeciale \EE \Vert \va{\beliefbis} - \priorbelief \Vert \geq 
\VoI_{\ACTIONS}\np{\va{\beliefbis}} \) in~\eqref{eq:Indifferences}.

By definition~\eqref{eq:support_function} of a support function, we have that 
\( \sigma_\ACTIONS(\cdot) \leq \norm{\ACTIONS} \times \norm{\cdot} \),
where \( \norm{\ACTIONS}= \sup
\{ \norm{\action} \eqsepv \action \in \ACTIONS \} < +\infty \).
Thus \( \ConstantSupspeciale = \norm{\ACTIONS} \) in the left hand side
inequality in~\eqref{eq:Indifferences}.
\smallskip

II). We prove the middle inequality
\( \VoI_{\ACTIONS}\np{\va{\beliefbis}} \geq 
\VoI_{\OptimalActions(\priorbelief)}\np{\va{\beliefbis}} \)
 in~\eqref{eq:Indifferences}.

For all \( \signed \in \SIGNED \), we have that 
\begin{subequations}
\begin{align}
  \sigma_\ACTIONS(\signed)-\sigma_\ACTIONS(\priorbelief) \geq & 
\sigma_{ \FACE_{\ACTIONS}(\priorbelief) } \np{\signed - \priorbelief} 
\label{eq:LowerBoundSupportFunction_a} 
\mtext{ by~\eqref{eq:property_subdifferential_of_the_support_function} 
since \( \FACE_{\ACTIONS}(\priorbelief) \not = \emptyset \) } \\
=& \proscal{\signed-\priorbelief}{\action} \eqsepv
\forall \action \in \FACE_{\ACTIONS}(\priorbelief) 
\mtext{ by definition of~$\sigma_{\FACE_{\ACTIONS}(\priorbelief) }$ } \\
=& \sigma_{\FACE_{\ACTIONS}(\priorbelief) } \np{\signed} - 
\sigma_{\FACE_{\ACTIONS}(\priorbelief) } \np{\priorbelief} 
\label{eq:LowerBoundSupportFunction_c} 
\mtext{ by definition of~$\sigma_{\FACE_{\ACTIONS}(\priorbelief) }$. }
\end{align}  
\label{eq:LowerBoundSupportFunction}
\end{subequations}
By taking the expectation~$\EE$, we obtain that 
\begin{subequations}
\begin{align}
\VoI_{\ACTIONS}\np{\va{\beliefbis}} =&
\EE  \left[ \sigma_{\ACTIONS}\np{\va{\beliefbis}}
-\sigma_{\ACTIONS}(\priorbelief) \right] 
\mtext{ by~\eqref{eq:VoI} and~\eqref{eq:value_function_support_function} }\\
\geq & \EE  \left[ \sigma_{\FACE_{\ACTIONS}(\priorbelief) } \np{\va{\beliefbis}
-\priorbelief} \right] 
\mtext{ by~\eqref{eq:LowerBoundSupportFunction_a} } 
\label{eq:LowerBoundVoIFace_b} \\ 
= & \EE  \left[ \sigma_{\FACE_{\ACTIONS}(\priorbelief) } \np{\va{\beliefbis}} - 
\sigma_{\FACE_{\ACTIONS}(\priorbelief)} \np{\priorbelief} \right] 
\mtext{ by~\eqref{eq:LowerBoundSupportFunction_c} } 
\label{eq:LowerBoundVoIFace_c} \\ 
=& \VoI_{\FACE_{\ACTIONS}(\priorbelief)}\np{\va{\beliefbis}} 
\mtext{ by~\eqref{eq:VoI} and~\eqref{eq:value_function_support_function}. }
\nonumber
\end{align}
\end{subequations}
This ends the proof of the middle inequality.
\smallskip

III). We prove the right hand side inequality 
\( \VoI_{\OptimalActions(\priorbelief)}\np{\va{\beliefbis}} \geq 
\EE \Vert \va{\beliefbis} - \priorbelief 
\Vert_{\IndifferenceKernel(\priorbelief)} \) in~\eqref{eq:Indifferences}. 

Let $n$ be the dimension of 
the affine hull~$\aff\bp{\FACE_{\ACTIONS}(\priorbelief)}$
of~$\FACE_{\ACTIONS}(\priorbelief)$, and let \( \action_1, \ldots, \action_n \) 
be $n$~actions in~$\FACE_{\ACTIONS}(\priorbelief)$
that generate ~$\aff\bp{\FACE_{\ACTIONS}(\priorbelief)}$. We put
\begin{equation}
T=\{\action_1, \ldots, \action_n \} \subset \FACE_{\ACTIONS}(\priorbelief) 
\mtext{ so that\,}\aff\bp{\FACE_{\ACTIONS}(\priorbelief)} =
\aff \{\action_1, \ldots, \action_n \} = \aff\np{T} \eqfinp 
\label{eq:affT=affFace}
\end{equation}
We will now show that 
\( 
\Vert \cdot \Vert_{\IndifferenceKernel(\priorbelief)}
=  \frac{1}{n} \sigma_{ T - T }\np{\cdot}
\) 
is a seminorm with kernel 
\( \np{\FACE_{\ACTIONS}(\priorbelief)-\FACE_{\ACTIONS}(\priorbelief)}^\perp \)
that satisfies the right hand side
inequality in~\eqref{eq:Indifferences}. 
\smallskip

First, the support function $\sigma_{T-T}$ is a seminorm 
with kernel \( \np{T-T}^\perp \), as easily seen.
Now, we also easily see that, for any subset~$S \subset \RR^{\NATURE}$,
one has 
\( 
\np{S-S}^\perp = \bp{ \aff\np{S-S} }^\perp = 
\bp{ \aff\np{S}-\aff\np{S} }^\perp 
\). 
Using these equalities with $S=T$ and $S=\FACE_{\ACTIONS}(\priorbelief)$,
we deduce that \( \np{T-T}^\perp = 
\np{\FACE_{\ACTIONS}(\priorbelief)-\FACE_{\ACTIONS}(\priorbelief)}^\perp \),
since \( \aff\np{T}=\aff\bp{\FACE_{\ACTIONS}(\priorbelief)} \)
by~\eqref{eq:affT=affFace}.
Second, we show that the right hand side
inequality in~\eqref{eq:Indifferences} is satisfied.
We have
\begin{subequations}
\begin{align*}
\VoI_{\ACTIONS}\np{\va{\beliefbis}} & \geq 
\EE \left[ \sigma_{\FACE_{\ACTIONS}(\priorbelief)} (\va{\beliefbis}-\priorbelief)
\right] \mtext{ by~\eqref{eq:LowerBoundVoIFace_b} } \\ 
&\geq \EE \left[ \sigma_{T} (\va{\beliefbis}-\priorbelief) \right]
\tag{because $T \subset \FACE_{\ACTIONS}(\priorbelief) $ 
and support functions~\eqref{eq:support_function} 
are monotone with respect to set inclusion} 
\\
&=\EE \left[ \sigma_{T} (\va{\beliefbis}-\priorbelief) 
- \proscal{ \va{\beliefbis} - \priorbelief }{\action} \right] 
\eqsepv \forall \action \in \ACTIONS 
\mtext{ because 
\( \EE \left[ \proscal{ \va{\beliefbis} - \priorbelief }{\action} \right] 
= 0 \). } \\
&=\EE \left[ \sigma_{T-\action} (\va{\beliefbis}-\priorbelief)  \right] 
\eqsepv \forall \action \in \ACTIONS 
\mtext{ because $\sigma_{T-\action}=\sigma_{T + \{-\action\}}
=\sigma_{T} + \sigma_{\{-\action\}}$.}
\end{align*}
\end{subequations}
Indeed, support functions transform a Minkowski sum of sets
into a sum of support functions
\cite[p.~226]{Hiriart-Ururty-Lemarechal-I:1993}. Using again this property, we
obtain that 
\(   \VoI_{\ACTIONS}\np{\va{\beliefbis}}  \geq \frac{1}{n} 
\sum_{i=1}^n  \EE \left[ \sigma_{T-\action_i}
  (\va{\beliefbis}-\priorbelief) \right] \)
\( = \frac{1}{n}  \EE \left[ 
\sigma_{ \sum_{i=1}^n \np{T-\action_i} } 
(\va{\beliefbis}-\priorbelief) \right] \). 
Now, as \( T=\{\action_1, \ldots, \action_n \} \), 
it is easy to see that the sum 
\( \sum_{i=1}^n \np{T-\action_i} \) contains any
element of the form \( \action_k - \action_l =\)
\( (\action_1 - \action_1) + \cdots + 
   (\action_{l-1} - \action_{l-1}) + (\action_k- \action_l) + 
   (\action_{l+1} - \action_{l+1}) + \cdots + (\action_n - \action_n)
\in \sum_{i=1}^n \np{T-\action_i} \). 
As support functions are monotone with respect to set inclusion, 
we deduce that 
\( 
\sigma_{ \sum_{i=1}^n \np{T-\action_i} } \geq
\sigma_{ \{ \action_k - \action_l , k,l= 1,\ldots, n \} } 
= \sigma_{T-T} 
\) 
and that
\( 
  \VoI_{\ACTIONS}\np{\va{\beliefbis}}  \geq \frac{1}{n} \EE \left[ 
\sigma_{ \{ \action_k - \action_l , k,l= 1,\ldots, n \} } 
(\va{\beliefbis}-\priorbelief) \right] = 
\frac{1}{n} \EE \left[ \sigma_{ T - T }
(\va{\beliefbis}-\priorbelief) \right]  
= \EE \Vert \va{\beliefbis} - \priorbelief 
\Vert_{\IndifferenceKernel(\priorbelief)} 
\). 

This ends the proof. 
\end{proof}

\subsubsection{Flexible}

\begin{proof}[Proof of Proposition~\ref{pr:Flexible_decisions}]

All the reminders on geometric convex analysis 
in~Sect. \ref{Recalls_on_geometric_convex_analysis} 
were done with outer normal vectors
belonging to the unit sphere of signed measures. 
Now, as we work with beliefs --- positive measures of mass~1 ---
we are going to adapt these concepts. 
We consider the diffeomorphism 
\begin{equation}
  \nu : \sphere \cap \RR_+^{\NATURE} \to \BELIEFS \eqsepv \signed \mapsto  
\frac{\signed}{\proscal{\signed}{1}} \eqfinv
\label{eq:nu}
\end{equation}
that maps unit positive measures into probability measures, 
with inverse
\( 
  \nu^{-1} : \BELIEFS \to \sphere \cap \RR_+^{\NATURE} \eqsepv 
\belief \mapsto \frac{\belief}{\norm{\belief}} 
\). 

Since, by assumption, 
the {action set}~$\ACTIONS$ has boundary~$\partial\ACTIONS$ 
which is a~$C^2$ submanifold of~$\RR^{\NATURE}$, 
we know by Proposition~\ref{pr:normal_circ_face} that 
the spherical image map~\( \normal_{\ACTIONS} : \partial\ACTIONS \to \sphere \) 
in~\eqref{eq:spherical_image_map} is well defined, is of class~$C^1$,
and has for right inverse the reverse spherical image 
map~$\face_{\ACTIONS} : \mathrm{regn}\np{\ACTIONS} \to \partial\ACTIONS$
in~\eqref{eq:reverse_spherical_image_map}, 
that is, \( \normal_{\ACTIONS} \circ \face_{\ACTIONS} =
\mathrm{Id}_{\mathrm{regn}\np{\ACTIONS}} \). 

The \emph{set of relevant regular points} is the subset of 
the set~\( \mathrm{reg}\np{\ACTIONS} \) of regular points defined by
\begin{equation}
\action \in \mathrm{reg}^{+}\np{\ACTIONS} \iff \exists \belief \in \BELIEFS 
\eqsepv \NORMAL_{\ACTIONS}(\action)=\RR_+ \belief \eqfinp 
\label{eq:set_of_relevant_regular_points}
\end{equation}
For a regular action~$\action \in \mathrm{reg}^{+}\np{\ACTIONS}$, there is only one 
probability~$\belief \in \BELIEFS~$ such that \( \NORMAL_{\ACTIONS}(\action)=\RR_+ \belief
\), and it is \( \belief = \nu\bp{\normal_{\ACTIONS}\np{\action}} \). 
We have 
\( 
  \action \in \mathrm{reg}^{+}\np{\ACTIONS} \Rightarrow 
\NORMAL_{\ACTIONS}(\action) = \RR_+ \nu\bp{\normal_{\ACTIONS}\np{\action}}
\) where 
\( \nu\bp{\normal_{\ACTIONS}\np{\action}} \in \BELIEFS
\). 
The \emph{set of regular probabilities} is
\( 
\mathrm{regn}^{+}\np{\ACTIONS} = \Bp{ \RR_+^* \mathrm{regn}\np{\ACTIONS} }
\cap \BELIEFS 
\). 
For a regular probability~$\belief \in \mathrm{regn}^{+}\np{\ACTIONS}$, 
there is only one action~$\action \in \partial\ACTIONS$ such that 
\( \FACE_{\ACTIONS}(\belief) = \{ \action \} \), and it is 
\( \action = \face_{\ACTIONS}\bp{\nu^{-1}\np{\belief}} \). 
Indeed, by definition~\eqref{eq:face_signed} of the (exposed) face, we have that 
\( 
  \FACE_{\ACTIONS}(\lambda \signed) = \FACE_{\ACTIONS}(\signed) 
\eqsepv \forall \lambda \in \RR_+^* \eqsepv 
\forall \signed \in \SIGNED \eqsepv \signed \not = 0  
\). 
Therefore, we have that 
\begin{equation}
 \belief \in \mathrm{regn}^{+}\np{\ACTIONS} \Rightarrow 
\FACE_{\ACTIONS}(\belief) = \{ \face_{\ACTIONS}\bp{\nu^{-1}\np{\belief}} \} \eqfinp
\label{eq:reverse_probability_image_map_characterization}
\end{equation}
The following mappings are well defined:
\( 
  \nu \circ \normal_{\ACTIONS} : \mathrm{reg}^{+}\np{\ACTIONS} \to \BELIEFS
\) and 
\( \face_{\ACTIONS} \circ \nu^{-1}  : \mathrm{regn}^{+}\np{\ACTIONS} \to 
\partial\ACTIONS \),
and we have that 
\( 
\np{\nu \circ \normal_{\ACTIONS}} \circ \np{\face_{\ACTIONS} \circ \nu^{-1}} =
\mathrm{Id}_{\mathrm{regn}^{+}\np{\ACTIONS}} 
\). 

\begin{itemize}
\item 
Item~\ref{it:curved_face}~$\Rightarrow$ Item~\ref{it:curved_diffeomorphism}.\\
Suppose that the face~$\FACE_{\ACTIONS}(\priorbelief)$ is a singleton
\( \{ \action\opt \} \)
and the curvature of the boundary~$\partial\ACTIONS$ of payoffs 
at~$\action\opt$ is positive.
Since, by assumption, 
the {action set}~$\ACTIONS$ has boundary~$\partial\ACTIONS$ 
which is a~$C^2$ submanifold of~$\RR^{\NATURE}$, we know that
the spherical image map~\( \normal_{\ACTIONS} \) 
in~\eqref{eq:spherical_image_map} is defined 
over the whole boundary~$\partial\ACTIONS$ and is of class~$C^1$,
and its differential is the {Weingarten map}.
As the curvature of the boundary~$\partial\ACTIONS$ of payoffs 
at~$\action\opt$ is positive, 
the {Weingarten map}~$T_{\action\opt}\normal_{\ACTIONS}$
is of maximal rank at~$\action\opt$ \cite[p.~115]{Schneider:2014}.
Therefore, by the  inverse function theorem, 
there exists an open neighborhood~\(\neighborhood{\Action}\) 
of~$\action\opt$ in~$\ACTIONS$ such that 
\( \normal_{\ACTIONS}\np{\neighborhood{\Action}} \) is an open neighborhood
of \( \normal_{\ACTIONS}\bp{ \action\opt } \) in~$\sphere$,
and such that the restriction \( \normal_{\ACTIONS} : 
\neighborhood{\Action} \to \normal_{\ACTIONS}\np{\neighborhood{\Action}} \)
of the spherical image map in~\eqref{eq:spherical_image_map} is a
diffeomorphism.
By item iii) in Proposition~\ref{pr:normal_circ_face},  
we have that 
\( \normal_{\ACTIONS}\bp{ \action\opt } = 
\frac{\priorbelief}{\norm{\priorbelief}} \) and
the local inverse coincides with the 
restriction \( \face_{\ACTIONS} : \normal_{\ACTIONS}\np{\neighborhood{\Action}} 
\to \neighborhood{\Action} \) of the {reverse spherical image map}
in~\eqref{eq:reverse_spherical_image_map}. 
As \( \normal_{\ACTIONS}\np{\neighborhood{\Action}} \) is an open neighborhood
of~\(\frac{\priorbelief}{\norm{\priorbelief}} \) in~$\sphere$,
and as the prior belief~$\priorbelief$ has full support, we deduce that
\( \nu\bp{\normal_{\ACTIONS}\np{\neighborhood{\Action}}} \) is an open neighborhood
of~\(\priorbelief\) in~$\BELIEFS$,
where the diffeomorphism~$\nu$ is defined in~\eqref{eq:nu}. 
We easily deduce that \( \face_{\ACTIONS} \circ \nu^{-1} : 
\nu\bp{\normal_{\ACTIONS}\np{\neighborhood{\Action}}} \to
\neighborhood{\Action}\) is a diffeomorphism. 
By~\eqref{eq:reverse_probability_image_map_characterization}, 
we conclude that \( \face_{\ACTIONS} \circ \nu^{-1} \) is the restriction of the 
set-valued mapping \( \FACE_{\ACTIONS} :  \BELIEFS \rightrightarrows
\ACTIONS \), \( \belief \mapsto \FACE_{\ACTIONS}(\belief) \) 
in~\eqref{eq:face_set-valued_mapping}.

\item 
Item~\ref{it:curved_diffeomorphism}~$\Rightarrow$ Item~\ref{it:curved_value_function}.\\
Suppose that the set-valued mapping \( \FACE_{\ACTIONS} :  \BELIEFS \rightrightarrows
\ACTIONS \), \( \belief \mapsto \FACE_{\ACTIONS}(\belief) \) 
in~\eqref{eq:face_set-valued_mapping} is a local diffeomorphism
at~$\priorbelief$.
By definition~\eqref{eq:set_of_regular_signed_measures} of the 
{set of regular unit signed measures}, 
there exists an open neighborhood~$\neighborhood{\beliefbis}$ 
of~$\priorbelief$ in~$\BELIEFS$ such that 
\( \neighborhood{\beliefbis} \subset \mathrm{regn}^{+}\np{\ACTIONS} \),
where the {set of relevant regular points} is defined 
in~\eqref{eq:set_of_relevant_regular_points}.
In addition, the mapping \( \face_{\ACTIONS} \circ \nu^{-1} : 
\neighborhood{\beliefbis} \to
\face_{\ACTIONS}\bp{\nu^{-1}\np{\neighborhood{\beliefbis}}} \) 
is a diffeomorphism. 

As \( \FACE_{\ACTIONS}(\belief) = 
\{ \face_{\ACTIONS}\bp{\nu^{-1}\np{\belief}} \} \),
for all beliefs~\( \belief \in \neighborhood{\beliefbis} \), 
we know that the support function~$\sigma_\ACTIONS$ is differentiable and that 
its gradient is \( \nabla_{\belief}\sigma_\ACTIONS = 
\face_{\ACTIONS}\bp{\nu^{-1}\np{\belief}} \)
\cite[p.~251]{Hiriart-Ururty-Lemarechal-I:1993}. 
As  \( \face_{\ACTIONS} \circ \nu^{-1} \) is a local diffeomorphism
at~$\priorbelief$, and as the mapping~$\nu$ in~\eqref{eq:nu} is a
diffeomorphism, we deduce that the support function~$\sigma_\ACTIONS$ is 
twice differentiable with Hessian having full rank. 
As the value function~$\Value_{\ACTIONS}$ is the restriction
of~$\sigma_\ACTIONS$ to~$\BELIEFS$, we conclude that~$\Value_{\ACTIONS}$ is 
twice differentiable at~$\priorbelief$ and the Hessian is positive definite.

\item 
Item~\ref{it:curved_value_function}~$\Rightarrow$ Item~\ref{it:curved_face}.\\
Suppose that the value function~$\Value_{\ACTIONS}$ 
is twice differentiable at~$\priorbelief$
and the Hessian is positive definite.

On the one hand, as the prior~$\priorbelief$ has full support,
there exists an open neighborhood~$\neighborhood{\beliefbis}$ 
of~$\priorbelief$ in~$\BELIEFS$ such that~$\Value_{\ACTIONS}$ 
is differentiable on~$\neighborhood{\beliefbis}$.
On the other hand, as the support function~$\sigma_\ACTIONS$ 
is positively homogeneous,
and by~\eqref{eq:value_function_support_function}, we have that
\begin{equation}
\sigma_\ACTIONS\np{\signed} = \proscal{\signed}{1} \times 
  \bp{\Value_{\ACTIONS} \circ \nu} \np{\signed} \eqsepv 
\forall \signed \in \sphere \cap \RR_+^{\NATURE} \eqfinp
\label{eq:curved_value_function_support}
\end{equation}
Therefore, as the mapping~$\nu$ in~\eqref{eq:nu} is a diffeomorphism, 
the support function~$\sigma_\ACTIONS$ is differentiable on the 
open neighborhood~$\nu^{-1}\np{\neighborhood{\beliefbis}}$ 
of~$\nu^{-1}\np{\priorbelief}=
\frac{\priorbelief}{\norm{\priorbelief}}$ in~$\sphere \cap \RR_+^{\NATURE}$.

Since, on the one hand, 
a convex function with effective domain~$\RR^{\NATURE}$ is differentiable
at~$\signed$ if and only if the subdifferential at~$\signed$
is a singleton \cite[p.~251]{Hiriart-Ururty-Lemarechal-I:1993},
and, on the other hand, the face~$\FACE_{\ACTIONS}(\signed)$ is
the subdifferential at~$\signed$ of the support function~$\sigma_\ACTIONS$ 
\cite[p.~258]{Hiriart-Ururty-Lemarechal-I:1993},
we conclude that the face~$\FACE_{\ACTIONS}(\signed)$ of~$\ACTIONS$ 
in the direction~$\signed \in \nu^{-1}\np{\neighborhood{\beliefbis}}$ 
is a singleton.

Therefore, by definition~\eqref{eq:set_of_regular_signed_measures} of the 
{set of regular unit signed measures}, we have that 
\( \nu^{-1}\np{\neighborhood{\beliefbis}} \subset \mathrm{regn}\np{\ACTIONS} \).
In addition, the restriction \( \face_{\ACTIONS} : 
\nu^{-1}\np{\neighborhood{\beliefbis}} \to
\face_{\ACTIONS}\bp{\nu^{-1}\np{\neighborhood{\beliefbis}}} \) 
of the {reverse spherical image map}
in~\eqref{eq:reverse_spherical_image_map} is well defined, and we have that 
\( 
\nabla_{\signed}\sigma_\ACTIONS = \face_{\ACTIONS}\np{\signed} \eqsepv 
\forall \signed \in \nu^{-1}\np{\neighborhood{\beliefbis}} 
\). 
Therefore, the mapping \( \face_{\ACTIONS} : 
\nu^{-1}\np{\neighborhood{\beliefbis}} \to
\face_{\ACTIONS}\bp{\nu^{-1}\np{\neighborhood{\beliefbis}}} \) 
is differentiable at~$\nu^{-1}\np{\priorbelief}=
\frac{\priorbelief}{\norm{\priorbelief}}$, and has full rank.
Indeed, $\sigma_\ACTIONS$ is twice differentiable at~$\nu^{-1}\np{\priorbelief}=
\frac{\priorbelief}{\norm{\priorbelief}}$,
and the Hessian is positive definite.
This comes from~\eqref{eq:curved_value_function_support}, where 
the mapping~$\nu$ in~\eqref{eq:nu} is a~$C^{\infty}$ diffeomorphism
and the value function~$\Value_{\ACTIONS}$ 
is twice differentiable at~$\priorbelief$ with positive definite Hessian.

As \( \face_{\ACTIONS} \) is is differentiable 
at~$\frac{\priorbelief}{\norm{\priorbelief}}$ and has full rank, 
the {reverse Weingarten map}~$T_{\signed}\face_{\ACTIONS}$ 
in~\eqref{eq:reverse_Weingarten_map} is well defined and has full rank.
Therefore, the principal radii of curvature of~$\ACTIONS$ 
at~$\frac{\priorbelief}{\norm{\priorbelief}}$ are positive.
Letting \( \action\opt = 
\face_{\ACTIONS}\bp{\frac{\priorbelief}{\norm{\priorbelief}}} \), 
we conclude that \( \FACE_{\ACTIONS}\np{\priorbelief} = \{ \action\opt \} \)
and that the curvature of the boundary~$\partial\ACTIONS$ of payoffs 
at~$\action\opt$ is positive.
\end{itemize}
This ends the proof.
\end{proof}
\smallskip

\begin{proof}[Proof of Theorem~\ref{th:Flexible_decisions}]

We suppose that the value function~$\Value_{\ACTIONS}$  in~\eqref{eq:value_function}
is twice differentiable at~$\priorbelief$, with positive definite Hessian.
We denote \( \FACE_{\ACTIONS}\np{\priorbelief} = \{ \action\opt \} \). 

First, we show that the function 
\( 
  g(\belief) = \frac{ \Value_{\ACTIONS}(\belief) -
    \Value_{\ACTIONS}(\priorbelief) - 
\proscal{\belief-\priorbelief}{\action\opt} }%
{ \norm{\belief-\priorbelief}^2 } 
\) 
is continuous and positive on~$\BELIEFS$. 
Indeed, $g$ is continuous on~$\BELIEFS \backslash \{\priorbelief \}$,
and also at~$\priorbelief$ since the value function~$\Value_{\ACTIONS}$ 
is twice differentiable at~$\priorbelief$. 
In addition, \( g(\priorbelief) > 0 \) since the Hessian 
of~$\Value_{\ACTIONS}$  at~$\priorbelief$ is positive definite. 
We have~$g \geq 0~$ on~$\BELIEFS \backslash \{\priorbelief \}$,
because \( \FACE_{\ACTIONS}\np{\priorbelief} = \{ \action\opt \} \) is the 
subdifferential at~$\priorbelief$ of the support function~$\sigma_\ACTIONS$,
and by~\eqref{eq:value_function_support_function}. 
We now prove by contradiction that~$g > 0$.
If there existed a belief \( \belief \neq \priorbelief \) such that 
\( g(\belief) = 0 \), we would have \( \Value_{\ACTIONS}(\belief) -
    \Value_{\ACTIONS}(\priorbelief) - 
\proscal{\belief-\priorbelief}{\action\opt} = 0 \);
this equality would then hold true over the whole segment 
\( [\belief , \priorbelief ] \), and we would conclude that the second
derivative of~$\Value_{\ACTIONS}$ at~$\priorbelief$ along the (nonzero)
direction \( \belief-\priorbelief \) would be zero; 
this would contradict the assumption that the Hessian 
of~$\Value_{\ACTIONS}$  at~$\priorbelief$ is positive definite. 
Therefore, we conclude that~$g > 0$.
Second, letting~$\ConstantSup >0$ and $\ConstantInf >0$ 
be the maximum and the minimum
of the function~$g>0$ on the compact set~$\BELIEFS$, 
we easily deduce~\eqref{eq:Flexible_decisions} from~\eqref{eq:VoI}.

This ends the proof.
\end{proof}

\end{document}